\newtheorem{theorem}{Teorema}[section]
\newtheorem{lemma}[theorem]{Lema}
\newtheorem{principle}[theorem]{Principio}
\newtheorem{definition}[theorem]{Definición}
\newtheorem{proposition}[theorem]{Proposición}
\newtheorem{notation}[theorem]{Notación}
\newtheorem{comments}[theorem]{Comentario}
\newtheorem{remark}[theorem]{Observación}
\newtheorem{statement}[theorem]{Afirmación}
\begin{document} 
\title{Aritmética}
\dedicatory{En memoria de Cata...}
\author{Joel Torres Del valle}
\email{jrorresdv1@gmail.com}

\maketitle

\tableofcontents

\section*{Introducción}

\noindent  Durante mucho tiempo existió la pretensión de concebir las matemáticas como una idealización del mundo palpable, y proceder sobre ellas como se haría entre objetos del mundo real. A finales del siglo XIX comenzó la aparición de paradojas en la joven Teoría de Conjuntos del matemático ruso Georg Cantor (1845-1918) y, de esta manera, las matemáticas, que ostentaban el título no meritorio de un \emph{ciencia exacta}, comenzó a desvanecerse. Se observó, pues, que el bello edificio se encontraba parado sobre arenas movedizas y se tambaleaba, al son del viento más ligero.
\smallskip
 
\noindent   Comenzó entonces un programa de fundamentación de las matemáticas, con el cual se pretendía la construcción de cimientos sólidos sobre los cuales parar el edificio matemático. De las diversas corrientes de Filosofía matemática que abordaron el problema, podemos señalar al \emph{Formalismo}.  El programa formalista estuvo principalmente impulsado por el matemático alemán David Hilbert (1862-1943), y su requerimiento final era una prueba de la no contradicción de las matemáticas. Por supuesto, damos por descontado, el sueño de la completitud de los sistemas formales sobre los cuales se fundamentarían las matemáticas. Para David Hilbert, la lógica y las matemáticas son  teorías de forma y no de sentido \cite{campos}, \cite{diodonie}. Es decir, \emph{todas las pruebas se debían llevar a cabo mediante reglas fijas sobre el manejo de símbolos, sin tener en cuenta en ningún momento el significado de los mismos}. Pensamiento que queda en claro cuando este dice: \emph{la matemática es un juego con reglas muy sencillas, que dejan marcas sin significado sobre el papel.} 
\smallskip
 
\noindent   El afán por una prueba de la no contradicción viene luego de que a partir los trabajos de Gottlob Frege (1848-1925), {\it Begriffschrift, a formula language, modeled upon that of arithmetic, for pure thought} (University of Jena, 1879)  se pudiera deducir la paradoja de Russell, que en la simbología de Peano podría expresarse como\footnote{Tomada de la carta de Bertrand Russell (1872-1970) a Gottlob Frege en la que se plantea la paradoja.}

\[
w=\mathrm{cls}\cap x\backepsilon(x\sim \epsilon x).\supset:w\in w.=.w\sim\epsilon w.
\]

\noindent   La cual, en palabras del propio Russell, corresponde a:  {\it Let $w$ be the predicate: to be a predicate that cannot be predicated of itself. Can $w$ be a predicated of itself? From each answer its opposite follows. Therefore we must conclude that $w$ is not a predicate. Likewise there is no class (as a totality) of those classes which, each taken as a totality, do not belong to themselves.}\footnote{ Russell in letter to G. Frege.} Y no puedo dejar pasar, por supuesto, la altura con que Frege asume la aterradora noticia de que su trabajo permitía la aparición de paradojas y, lleno de humildad cientifica, añade una nota al segundo volumen de su trabajo  {\it Die Grundlagen der Arithmetik} (University of Jena, 1884) que se encontraba ya en imprenta, en la que comenta respecto al descubriemiento de Russell en el primer volumen  de su trabajo. 
\smallskip
 
\noindent   En el año 1910 aparece el primer volumen de \emph{Principia Mathematica}, en el que Bertrand Russell (1972-1970) y Alfred North Whitehead (1861-1947), construyen gran parte de las matemáticas sin paradojas ni contradicciones aparentes. Quedaba entonces la cuestión de si el sistema resultaba completo, habida cuenta que todo parecía indicar la consistencia. Los primeros pasos hacia un esclarecimiento de este requerimiento se dieron en una dirección que parecía  divisar una \emph{luz al final del túnel}. Primeramente en el año 1930, en su tesis doctoral \cite{godel} Kurt Gödel (1906-1978) demuestra la completitud del cálculo lógico de primer orden. Sin embargo, por esas paradojas de la suerte, es el mismo Gödel quien un año después, en 1931 demuestra que si a los Axiomas de Peano para la Aritmética, le juntamos toda la lógica de \emph{Principia Mathematica}, no obtenemos un sistema del cual se puedan demostrar todas las verdades sobre los números naturales \cite{godelii}. Concretamente\footnote{Ver: {\it Algunos resultados metamátematicos sobre completitud y consistencia}, Kurt Gödel, 1931. Puede encontrarse en \emph{Kurt Gödel: obras completas}, editado y traducido por Jesís Monsterín.} 

\begin{itemize}
\item[1.] El sistema $S$ no es completo, es decir, en él hay sentencias $\varphi$, tales que ni $\varphi$ ni $\neg\varphi$ son deducibles y, en especial, hay problemas indecidibles con la sencilla estructura $\exists xFx$, donde $x$ varia sobre los números naturales y $F$ es una propiedad de los números naturales.
\item[2.] Incluso si admitimos todos los medios lógicos de \emph{Principia Mathematica}  en la metamatemática no hay ninguna prueba de consistencia para $S$. Por consiguiente, una prueba de consistencia para el sistema $S$ solo puede llevarse a cabo con la ayuda de modos de inferencia que no esten formalizados en el sistema $S$.
\item[3.] Ni siquiera añadiendo a $S$ una cantidad finita de axiomas de tal forma que el sistema extendido permanezca siendo $\omega$-consistente.
\end{itemize}

\noindent   Sin embargo, estas frases inicialmente no eran algo que un matemático se preguntara, por ejemplo, un matemático se preguntaría ¿es la Conjetura de Goldbach cierta o falsa? etc. Sin embargo, mediante un proceso hoy conocido como Codificación de Gödel, Gödel construye una frase explicita, de la Teoría de números (pero que sin embargo no surge de manera natural en la misma) que no se puede demostrar ni refutar. Algo así como una formulación matemática de la paradoja del mentiroso: Gödel demuestra que los números naturales son lo suficientemente fuertes como para codificar todas las verdades del sistema $S$ y sobre el sistema S. Éste construye una sentencia explicita, digamos G que de ser demostrable implica su refutación y viceversa. De modo que ni G su negación son deducbibles si el sistema se supone consistente. Queda entonces la pregunta ¿cómo podemos hacer estas frases más matemáticas? En los años 70's  Jeff Paris (1944-Ahora) y Leo Harrington (1946-Ahora), en \cite{paris} muestran que el resultado combinatorio: 

\begin{quote}
Para todos números naturales $n, k, m$ hay un número $l$ tal que si $f:[n]^{n}\to k$, hay un $Y\subset l$ tal que $Y$ es homogéneo para $f$, $\mathrm{card}(Y)\geq m$, y si $y_{0}$ es el menor elemento de $Y$, entonces $\mathrm{card}(Y)\geq y_{0}$.
\end{quote}

\noindent   No es demostrable (aunque cierto) en PA (la Aritmética de Peano). De modo que se obtienen los primeros resultados de independecia ``matemáticos'' en Aritmética, como se deseaba. Antes, con las geometrías no-euclideanas ya se habían establecido ciertas independecias, como el \emph{V Postulado de Euclides}, etc. Además, en Teoría de conjuntos, se había establecido la independencia de la \emph{Hipótesis del continuo de Cantor,} por parte de Gödel, quién demostró que la Hipótesis no se podía refutar en ZFC, y Paul Cohen (1934-2007), quién probó que la negación de la Hipótesis tampoco se podría refutar en ZFC. Respecto a esta última podrían consultarse \emph{Set theory and the continuum hypothesis} (Dover Books on Mathematics) de Paul Cohen y el trabajo \emph{The Consistency of the Axiom of Choice and of the Generalized Continuum Hypothesis with the Axioms of Set Theory} (Princeton University press) de Kurt Gödel. Quiero terminar con esta metáfora del escritor argentino  Jorge Luis Borges (1899-986). 
\medskip

\begin{quote}
[...] Veinticinco símbolos suficientes (veintidós letras, el espacio, el punto, la
coma) cuyas variaciones con repetición abarcan todo lo que es dable expresar:
en todas las lenguas. El conjunto de tales variaciones integraría una
Biblioteca Total, de tamaño astronómico [...]
Todo estaría en sus ciegos volúmenes. Todo: la historia minuciosa del porvenir,
Los egipcios de Esquilo, el número preciso de veces que las aguas del
Ganges han reflejado el vuelo de un halcón, el secreto y verdadero nombre
de Roma, la enciclopedia que hubiera edificado Novalis, mis sueños y entresueños
en el alba del catorce de Agosto de 1934, la demostración del Teorema
de Pierre Fermat, los no escritos capítulos de Edwin Drood, esos mismos
capítulos traducidos al idioma que hablaron los garamantas, las Paradojas
de Berkeley acerca del tiempo y que no publicó, los libros de hierro de Urizen,
las prematuras epifanías de Stephen Dedalus que antes de un ciclo de
mil años nada querrían decir, el evangelio gnóstico de Basílides, el cantar que
cantaron las sirenas, el catálogo fiel de la Biblioteca, la demostración de la
falacia de ese catálogo. Todo, ...
\medskip

\hfill Jorge Luis Borges, \emph{La biblioteca Total}.
\end{quote}

\noindent Esta es una exposición de hechos sobre la Aritmética vista desde la lógica matemática. En la Sección 1 presentamos la aritmética de Peano, PA, y la teoría completa de $\mathbb{N}$, y mostramos que $\mathbb{N}$ es un modelo primo de la teoría de $\mathbb{N}$. En la Sección 2 nos ocupamos de los Teoremas de Incompletitud.  En la Sección 3 nos ocupamos de modelos no-estándar de la aritmética, en la Sección 4 presentamos el principio combinatorio de Paris-Harrington y en la Sección 5 su independencia. Los resultados aquí presentados, son citados de las referencias enlistadas al final.  Una discusión espistemológica e histótica de algunos aspectos discutidos aquí se puede encontrar en \cite{campos}, \cite{diodonie}. Sobre los Teoremas de Gödel con una aproximación más amena es fuente obligada el trabajo \emph{El Teorema de la incompeltitud de Gödel: versión para no iniciados} de Claudia Guitierrez (Revista Cubo Mat. Educ., Universidad de la Frontera, Vol. 1, 1999 pgs 68-75.). Finalmente, una discusión elegante, y prescisa sobre algunos aspectos antes mencionados se puede encontrar esparcida entre las líneas de \emph{Un paseo finito por el infinito} de Iván Castro y Jesús Pérez, editado por la Universidad Javeriana. Estas referencias, son realmente, un caramelo para intelecto.

\section{El modelo estándar}

\noindent Considere el conjunto de números naturales $\omega=\{0, 1, 2, ...\}$, en él queremos sumar, multiplicar y establecer un orden. Deseamos encontrar una descripción (teoría) para este conjunto, es decir, un conjunto de axiomas que nos permita establecer verdades acerca de $\omega$. Esta sección está dedicada a presentar el marco axiomático de Peano, PA,  y mostrar que todos los modelos de PA, tienen a $\mathbb{N}$ como subestructura elemental. Pricipalmente me baso en \cite{maker, kaye, keje}.

\begin{definition}\rm
Un \textbf{lenguaje formal de primer orden} $\mathcal{L}$, es una colección de símbolos para operaciones (símbolos de función), relaciones y, constantes.
\medskip

\noindent  Para nuestro interés, tomamos en consideración  $\mathcal{L}_{A}:=\{+, \cdot, <, \underline{0}, \underline{1}\}$ donde $+, \cdot$ son símbolos de función, $<$ es un símbolo de relación, y $\underline{0}, \underline{1}$ son constantes. 
\end{definition}

\noindent  Una \textbf{estructura} para $\mathcal{L}_{A}$, o una $\mathcal{L}_{A}$-estructura, es un objeto de la forma $\mathcal{M}=(M, +_{M}, \cdot_{M}, <_{M}, 0_{M}, 1_{M})$ donde $M$ es un conjunto no vacío, donde podemos \emph{interpretar} cada símbolo de $\mathcal{L}$.  La \textbf{estructura estándar} (o natural) de $\mathcal{L}_{A}$ es: $\mathbb{N}=(\omega, +, \cdot, <, 0, 1)$ donde $\omega=\{0, 1, 2, ...\}$ y $+, \cdot, <$ son la función suma, producto y la relación de orden lineal, respectivamente. Los \textbf{términos} de $\mathcal{L}_{A}$ son los símbolos constantes de éste, las variables y aquellos obtenidos por $+, \cdot$ aplicados a constantes y/o variables.

\begin{definition}\rm
Las \textbf{fórmulas} de $\mathcal{L}_{A}$, son definidas por inducción según se sigue: si $t_{1}, t_{2}$ son términos, entonces, $t_{1}=t_{2}, t_{1}<t_{2}$ son fórmulas. Si $\varphi, \psi$ son fórmulas, y $v$ es una variable, entonces, $\neg\varphi$, $\varphi\vee\psi$, $\varphi\wedge\psi$, $\forall v\varphi$ y $\exists v\varphi$ son fórmulas. 
\end{definition}

\begin{definition}\rm
Una \textbf{sentencia} es una fórmula cuyas varibles están todas ligadas a cuantificadores, en caso contrario se dice una \textbf{propiedad} o \textit{fórmula abierta}.  Una \textbf{teoría} es simplemente una colección de sentencias. Un \textbf{modelo} $\mathcal{M}$ para una teoría $T$ es una estructurta para su lenguaje en la cual son ciertos todos los axiomas de $T$, notamos $\mathcal{M}\models T$ para índicar que $\mathcal{M}$ es un modelo de $T$. Si $T$ tiene un modelo, se dice que es \emph{satisfacible}.
\end{definition}

\begin{definition}\rm Sea $\mathcal{M}$ una estructura, definimos $\mathrm{Th}(\mathcal{M}):=\{\varphi: \varphi$ es una sentencia y $\mathcal{M}\models\varphi\}$ donde $\mathcal{M}\models\varphi$ significa que $\varphi$ vale en $\mathcal{M}$. $\mathrm{Th}(\mathcal{M})$ recibe el nombre de \textbf{Teoría completa} de $\mathcal{M}$.
\end{definition}

\begin{comments}\rm
El nombre de {\it teoría completa} es intencional. Dada una teoría $T$ esta se dice completa siempre que para toda sentencia $\phi$ del lenguaje de $T$, y todo modelo $\mathcal{M}$ de $T$, $\mathcal{M}\models\phi$ o $\mathcal{M}\models\neg\phi$. Entonces $\mathrm{Th}(\mathcal{M})$ siempre es completa. Pareciera entonces que la busqueda de una descripción para $\mathbb{N}$ puede deternerse aquí pues tenemos una descripción `buena' en el sentido en que esta es completa. Sin embargo, dada una fórmula $\phi$ de $\mathcal{L}_{A}$ no siempre es fácil determinar cuando esta pertenece o no a $\mathrm{Th}(\mathbb{N})$. Además, por ser completa, no es recursivamente enumerable, por lo que nunca podremos conocerla del todo. Queremos una descripción un poco más simple, esta vendrá dada por PA. 
\medskip 

\noindent David Hilbert  era del pensamiento de que la no-contradicción de un conjunto de axiomas dados, nos da total derecho de pensar en la existencia de objetos (lo que hoy llamaríamos modelos) que cumplan dichos axiomas. De ahí su famosa frase: If the arbitrarily given axioms do not contradict each other through their consequences, then they are true, then the objects defined through axioms exist. That, for me, is the criterion of truth and existence. Este pensamiento termina de cimientarse con el nombrado Teorema de Completitud Gödel en el año 1930: {\it Una teoría es consistente si, y sólo si, es satisfacible} \cite{godel}.
\end{comments}

\begin{theorem}[Compacidad] \label{compacidad_theorem}
Para que un conjunto infinito de sentencias sea satisfacible, es necesario y suficiente que cada subconjunto finito suyo lo sea. 
\end{theorem}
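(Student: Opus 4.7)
El plan es obtener la compacidad como corolario inmediato del Teorema de Completitud de Gödel enunciado en el comentario precedente, que establece que una teoría es consistente si, y sólo si, es satisfacible. La dirección de necesidad no requiere prácticamente trabajo: si $\mathcal{M}\models T$, entonces la misma $\mathcal{M}$ satisface cualquier subconjunto de $T$ y, en particular, todos sus subconjuntos finitos.

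Para la suficiencia procedería por contrarrecíproco. Supondría que $T$ no es satisfacible; el objetivo es exhibir un subconjunto finito $T_{0}\subseteq T$ que tampoco lo sea. Aquí es donde se invoca la Completitud: la no-satisfacibilidad de $T$ equivale a su inconsistencia, es decir, a la existencia de una derivación formal, a partir de $T$ más los axiomas lógicos, de alguna fórmula de la forma $\varphi\wedge\neg\varphi$.

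El paso crucial —y en realidad el único de contenido combinatorio— es la observación de que una derivación formal es, por definición, una sucesión \emph{finita} de fórmulas. En consecuencia, sólo un número finito de premisas $\varphi_{1},\ldots,\varphi_{n}\in T$ interviene en ella, y el conjunto $T_{0}=\{\varphi_{1},\ldots,\varphi_{n}\}$ hereda la misma derivación, resultando él mismo inconsistente. Aplicando nuevamente el Teorema de Completitud a $T_{0}$, este no es satisfacible, lo cual contradice la hipótesis y cierra el argumento.

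El obstáculo —si así quiere llamársele— se concentra enteramente en el Teorema de Completitud, que es el resultado verdaderamente sustancial; una vez concedido éste, la compacidad se reduce a la finitud intrínseca de las pruebas formales. Una ruta alternativa, independiente de la Completitud, consistiría en construir un modelo explícitamente mediante ultraproductos: indexar por la familia $I$ de partes finitas de $T$, elegir un modelo $\mathcal{M}_{\Delta}$ para cada $\Delta\in I$, y tomar un ultrafiltro $U$ sobre $I$ que contenga los conjuntos $\{\Delta\in I:\varphi\in\Delta\}$ para cada $\varphi\in T$ (familia con la propiedad de intersección finita). El Teorema de {\L}o\'s garantizaría que el ultraproducto $\prod_{\Delta\in I}\mathcal{M}_{\Delta}/U$ es un modelo de $T$. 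Dada la cita previa a \cite{godel} en el comentario, optaría por la primera vía en esta exposición.
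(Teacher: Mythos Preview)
Tu propuesta es correcta y sigue esencialmente la misma ruta que el artículo: ambas direcciones se tratan igual, la necesidad por la trivialidad de restringir un modelo a un subconjunto, y la suficiencia por contrarrecíproco apelando a la finitud de las derivaciones formales junto con el Teorema de Completitud de G\"odel citado en el comentario previo. Si acaso, tu versión es más explícita al nombrar las dos invocaciones de Completitud (de $T$ no satisfacible a $T$ inconsistente, y de $T_{0}$ inconsistente a $T_{0}$ no satisfacible), mientras que el artículo las deja implícitas; la mención de la alternativa vía ultraproductos es pertinente pero, como bien decides, ajena al espíritu de la exposición.
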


\begin{proof}
Sea $\Delta$ un conjunto infinito de sentencias, sea $\varphi$ una contradicción deducida de $\Delta$, como las pruebas son finitas, existe $\Delta_{0}$ subconjunto de $\Delta$ finito tal que $\Delta_{0}\vdash\varphi$ (esto significa que hay una demostración de $\varphi$ a partir de $\Delta_{0}$), luego $\Delta_{0}$ es inconsitente. Si $\mathcal{M}\models\Delta$ entonces $\mathcal{M}\models\Delta_{0}$ para todo $\Delta_{0}\subset\Delta$ finito.
\end{proof}

\noindent Por el \emph{cardinal de una estructura}, entenderemos el cardinal de su universo. Tenemos el siguiente teorema:

\begin{theorem}[Löwenheim-Skolem-Tarski]\label{loweheim-skolem-tarski_theorem}
Si $T$ tiene por lo menos un modelo de cardinal infinito, entonces,  $T$ tiene un modelo de cualquier cardinal infinito. \hfill$\square$
\end{theorem}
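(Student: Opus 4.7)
El plan es dividir la prueba en dos direcciones: la \emph{dirección ascendente} (obtener modelos de cardinal mayor) usando el Teorema de Compacidad, y la \emph{dirección descendente} (obtener modelos de cardinal menor) construyendo subestructuras elementales mediante el test de Tarski-Vaught.

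Para la dirección ascendente, sea $\mathcal{M}\models T$ un modelo infinito y $\kappa$ un cardinal infinito arbitrario. Expando $\mathcal{L}_A$ añadiendo una familia $\{c_\alpha : \alpha<\kappa\}$ de nuevos símbolos constantes y formo la teoría $T' = T\cup\{c_\alpha\neq c_\beta : \alpha<\beta<\kappa\}$. Cualquier subconjunto finito $\Delta_0\subset T'$ menciona solamente una cantidad finita de las nuevas constantes, las cuales puedo interpretar como elementos distintos del universo infinito de $\mathcal{M}$; así $\mathcal{M}$ (con dicha expansión) satisface $\Delta_0$. Por el Teorema \ref{compacidad_theorem}, $T'$ es satisfacible, y el reducto de cualquier modelo suyo al lenguaje $\mathcal{L}_A$ es un modelo de $T$ de cardinal al menos $\kappa$.

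Para la dirección descendente, dado $\mathcal{M}\models T$ de cardinal $\geq\lambda\geq\aleph_0$ (y observando que $\mathcal{L}_A$ es contable), fijo $A\subseteq M$ con $|A|=\lambda$ y construyo recursivamente una cadena $A = N_0\subseteq N_1\subseteq\cdots\subseteq M$ cerrando bajo testigos existenciales: para cada fórmula $\varphi(x,\bar y)$ de $\mathcal{L}_A$ y cada tupla $\bar a$ de $N_n$ tal que $\mathcal{M}\models\exists x\,\varphi(x,\bar a)$, escojo un testigo en $M$ y lo agrego a $N_{n+1}$. En cada paso, $|N_{n+1}|\leq|N_n|+\aleph_0=\lambda$, así que la unión $N=\bigcup_n N_n$ cumple $|N|=\lambda$. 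El test de Tarski-Vaught asegura que $N$ es el universo de una subestructura elemental $\mathcal{N}\preceq\mathcal{M}$, y en particular $\mathcal{N}\models T$.

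Combinando ambas partes: para cualquier cardinal infinito $\lambda$, si $\lambda$ es a lo más el cardinal del modelo infinito de partida, aplico directamente la dirección descendente; si $\lambda$ lo supera, fabrico primero por la dirección ascendente un modelo de cardinal $\geq\lambda$ y luego lo recorto a cardinal exactamente $\lambda$ por la descendente. El obstáculo principal es la dirección descendente, donde hay que verificar el test de Tarski-Vaught (por inducción en la complejidad de las fórmulas) y controlar cuidadosamente la aritmética de cardinales al cerrar bajo testigos, invocando tácitamente el Axioma de Elección para seleccionarlos de manera uniforme.
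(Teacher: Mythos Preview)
The paper does not actually prove this theorem: the statement is followed immediately by \hfill$\square$, so there is nothing to compare against. Your proposal supplies precisely the standard textbook argument---upward via compactness with a fresh family of $\kappa$ constants, downward via iterated witness-closure and the Tarski--Vaught test (which the paper records as Lemma~\ref{test-tarski-vaught_lemma})---and it is correct.

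One cosmetic point: in the downward step your cardinality estimate $|N_{n+1}|\leq |N_n|+\aleph_0$ undercounts; the number of pairs (formula, tuple from $N_n$) is $\aleph_0\cdot |N_n|^{<\omega}=\lambda$, so the bound should read $|N_{n+1}|\leq |N_n|+\lambda=\lambda$. The conclusion is unchanged. Also note that, as stated in the paper, the theorem tacitly assumes a countable language (you correctly flag this by observing $\mathcal{L}_A$ is countable); for an arbitrary language the downward direction only reaches cardinals $\geq |\mathcal{L}|$.
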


\begin{definition}\rm
Un \textbf{homomorfismo} de una estructura $\mathcal{N}$  en $\mathcal{M}$, es una aplicación que conserva la interpretación de los símbolos del lenguaje $\mathcal{L}$, en cuestión. Una \textbf{inmersión} es un homomorfismo inyectivo. Se dice que $\mathcal{N}$ es una subestructura de $\mathcal{M}$ si el morfismo inmersión de $\mathcal{N}$ en $\mathcal{M}$ es una inmersión de estructuras.
\medskip

\noindent Una inmersión $\varphi:\mathcal{N}\to\mathcal{M}$ se dice \emph{elemental}, si para toda fórmula libre  en $n$ variables $\psi$ del lenguaje $\mathcal{L}$ se cumple que $\mathcal{N}\models\psi(\overline{a})$ si, y sólo si, $\mathcal{M}\models\psi(\varphi(\overline{a}))$ para todo $\overline{a}=a_{1}, ..., a_{n}\in N$. Se dice que $\varphi$ es \textbf{elemental}. Si el mapeo incluisión de $\mathcal{N}$ en $\mathcal{M}$ es una inmersión elemental, se dice que $\mathcal{N}$ es una \textbf{subestructura elemental de} $\mathcal{M}$.
\end{definition}

\subsection{La Aritmética de Peano, PA} La \textbf{Aritmética de Peano} (PA, por sus siglas en inglés, Peano arithmetic),
 es la $\mathcal{L}_{A}$-teoría engendrada en el seno de los axiomas siguientes:
leyes asociativas para $+$ y $\cdot$, sus elementos neutros $0$ y $1$, respectivamente,
distributividad, y los axiomas de orden lineal discreto para $<$ (orden total, hay un primer elemento 0, no hay mayor elemento, 
todo elemento tiene un sucesor, todo elemento diferente de 0
tiene un predecesor inmediato), 1 es el sucesor de 0, 
$x<y\to x+z<y+z$, y el esquema de inducción:
para toda $\mathcal{L}_{A}$ fórmula $\varphi(x, \overline{w})$,
tenemos el axioma

\[
\forall\overline{w}[\varphi(0, \overline{w})\wedge\forall x(\varphi(x, \overline{w})
\to\varphi(x+1, \overline{w}))\to\forall x\varphi(x, \overline{w})].
\]

\noindent $\mathbb{N}\models$PA, y claramente PA$\subset\mathrm{Th}(\mathbb{N})$. Así que todo modelo de $\mathrm{Th}(\mathbb{N})$ lo será de PA, pero no recíprocamente. 
\medskip

\noindent Observése que la inducción no es como tal un axioma, sino un proceso que determina un conjunto infinito de axiomas siempre que $\varphi(x, \overline{w})$ se reemplace por una fórmula particular del lenguaje $\mathcal{L}_{A}$. Antes de continuar, quiero citar una anotación de Gödel acerca de la incompletitud, hecha en \cite{godelii}.

\begin{quote}
\begin{small}
\noindent La verdadera razón para la incompletitud inherente en todo sistema formal de las matemáticas es qaue la formalización de tipos superiores se puede continuar en forma infinita... mientras que en cualquier sistema formal está disponioble sólo una cantidad numerable de ellos. Por esto, se puede mostrar que la proposición indecidible aquí contruida (en \cite{godelii}) se vuelve decidible siempre que se añadan tipos superiores apropiados (por ejemplo, el tipo $\omega$ al sistema de la Aritmética de Peano). Una situación análoga prevalece para el sistema axiomático de de la Teoría de Conjuntos.
\end{small}
\end{quote}

\noindent   Un par de resultados clásicos en PA, a saber, el Algoritmo de la división de Euclides y el Teorema de Bézout, son presentados a continuación.

\begin{theorem}[Algoritmo de Euclides]\label{algoritmo-euclides_theorem} Sea $M\models$PA y $a, b\in M$ con $a\neq0$. Entonces, existen   $r, s\in M$ únicos tales que \begin{equation}\label{al_euclides}
M\models(b=as+r\wedge r<a).
\end{equation}
\end{theorem}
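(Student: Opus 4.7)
The plan is to fix $a \in M$ with $a \neq 0$ and prove existence of $r, s$ by induction on $b$, which is available because $M \models $ PA and the formula $\varphi(b) \equiv \exists s\,\exists r\,(b = a\cdot s + r \wedge r < a)$ is an $\mathcal{L}_{A}$-formula. Uniqueness will then be handled separately, exploiting the monotonicity axiom $x < y \to x + z < y + z$ and the discreteness of the order.

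For the base case $b = 0$, I take $s = r = 0$, so $a\cdot 0 + 0 = 0$ by the neutral element axioms, and $0 < a$ since $0$ is the least element and $a \neq 0$. For the inductive step, given $b = as + r$ with $r < a$, I use that the order is discrete: from $r < a$ the axiom that every element has an immediate successor and the fact that $1$ is the successor of $0$ yield $r + 1 \leq a$. If $r + 1 < a$, then $b + 1 = as + (r + 1)$ works. If $r + 1 = a$, then $b + 1 = as + a = a(s + 1) + 0$ by distributivity and commutativity, and $0 < a$. Either way $\varphi(b+1)$ holds, so by induction $\forall b\,\varphi(b)$.

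For uniqueness, suppose $as_{1} + r_{1} = as_{2} + r_{2}$ with $r_{1}, r_{2} < a$. By trichotomy of $<$ on $M$, assume without loss of generality $s_{1} \leq s_{2}$. If $s_{1} < s_{2}$, then $s_{1} + 1 \leq s_{2}$ and repeated application of the monotonicity axiom together with distributivity give $as_{2} \geq as_{1} + a$, whence $r_{1} \geq a + r_{2} \geq a$, contradicting $r_{1} < a$. Hence $s_{1} = s_{2}$, and left cancellation for $+$ (an auxiliary fact provable in PA by an induction on the cancelled term) yields $r_{1} = r_{2}$.

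The main obstacle is the arithmetic bookkeeping inside PA: one cannot appeal to subtraction, and one must justify, as lemmas, the left cancellation law for $+$ and the implication ``$s_{1} < s_{2} \Rightarrow as_{1} + a \leq as_{2}$''. Both are routine inductions from the stated axioms (associativity, distributivity, monotonicity, and the immediate successor/predecessor axioms), but they have to be invoked explicitly rather than taken for granted, since the ambient structure $M$ need not be $\mathbb{N}$ and we are working purely syntactically within the PA axioms.
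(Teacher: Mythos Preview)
Your proof is correct and follows essentially the same route as the paper: induction on $b$ for existence with the same base case and the same $r+1<a$ versus $r+1=a$ split, and uniqueness via an order contradiction from $s_{1}\neq s_{2}$ followed by additive cancellation. The only difference is cosmetic---the paper derives the contradiction as $b<b$ rather than $r_{1}\geq a$, and it leaves the auxiliary monotonicity and cancellation facts implicit where you spell them out.
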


\begin{proof}
La existencia de $r$ y $s$ la probaremos por inducción sobre $x$ en la fórmula 

\[
\exists r\exists s(x=as+r\wedge r>a).
\]

\noindent En efecto, la fórmula $(0=0a+0\wedge0<a)$ es cierta en $M$, ya que $a\neq0$. Supóngase ahora, por hipótesis de inducción que para $x, r, s$ en $M$, se tiene que 

\[
M\models(x=as+r\wedge r<a).
\]

\noindent Entonces, $M\models(x+1=as+(r+1))$, entonces, alguno $r+1=a$ o $r+1<a$ debe ocurrir, así, 

\[
M\models x+1=(a(s+1)+0);
\]

\noindent y en ambos casos tenemos: $M\models\exists r'\exists s'(x+1=as'+r'\wedge r'<a)$. Luego, por inducción, concluimos  $\eqref{al_euclides}$. Para probar la unicidad, supóngase que $b, b', s, s', r, r'$ están en $M$ con $M\models b=as+r=as'+r'\wedge r<a\wedge r'<a$. Si $s<s'$, entonces, $M\models b=as+r<a(s+1)\leq as'+r'=b$, y si $s'<s$ obtenemos una contradicción. Así, $s=s'$ y $M\models as+r=as+r'$, así $r=r'$. 
\end{proof}

\noindent Ahora, procedemos a dar definiciones de nociones usuales en Teoría de Números, formalizadas en el lenguaje de PA. 

\begin{definition}\rm
\hspace{0.05cm}
\begin{itemize}
\item[i)] \textit{Congruencia.} $x\equiv y$(mód $z$)$\leftrightarrow\left(z\neq0\wedge\left(\frac{x}{z}\right)=\left(\frac{y}{z}\right)\right)$.
\item[ii)] \textit{Primo.} $\mathrm{Prim}(x)\leftrightarrow(x\geq2\wedge\forall y\forall z(x|(yz)\to(x|y\vee x|z))$.
\item[iii)] \textit{Irreducible.} $\mathrm{Irred}(x)\leftrightarrow\forall y(y|x\to(y=1\vee y=x))$.
\item[iv)] \textit{Coprimos.} $(x, y)=1\leftrightarrow(x\geq1\wedge y\geq1\wedge\forall u(u| x\wedge u|y\to u=1))$.
\end{itemize}

\end{definition}

\begin{theorem}[Teorema de Bézout] Sea $M\models$PA, si $x, y\in M$ son coprimos, entonces existe $z\in M$ tal que $x$ tiene un inverso multiplicativo a saber, $z$, módulo $y$.  \hfill $\square$
\end{theorem}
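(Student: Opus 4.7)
\medskip

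\noindent\emph{Plan de demostración.} El plan es establecer primero, dentro de $M$, la identidad de Bézout en la forma adaptada a PA: encontrar $a, b \in M$ tales que $M \models ax = by + 1$ o $M \models by = ax + 1$. A partir de esa disyunción el inverso multiplicativo se extrae de inmediato: en el primer caso basta tomar $z := a$, pues entonces $xz = by + 1$ garantiza que los residuos de $xz$ y de $1$ al dividir por $y$ coinciden; en el segundo, tomando $z := (y-1)a$ se verifica la identidad
\[
x(y-1)a \;=\; y \cdot [(y-1)b - 1] + 1,
\]
que es simple álgebra a partir de $by = ax + 1$ (con $y \geq 2$ y $b \geq 1$), y esto muestra $xz \equiv 1 \pmod{y}$. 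Los casos triviales $y = 1$ (toda congruencia módulo $1$ es trivial) y $x = 1$ (basta $z = 1$) se descartan de entrada.

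\medskip

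\noindent La identidad de Bézout se demuestra por inducción fuerte sobre $y$, aplicada a la fórmula
\[
\varphi(y) \;:=\; \forall x \bigl[ (x < y \wedge (x, y) = 1) \to \exists a \exists b \bigl( ax = by + 1 \vee by = ax + 1 \bigr) \bigr].
\]
Si $x > y$ se intercambian los papeles, y el caso $x = y$ obliga $x = y = 1$ por coprimalidad. Para el paso inductivo, dado $x < y$ con $\gcd(x, y) = 1$, aplicamos el Teorema \ref{algoritmo-euclides_theorem} para escribir $y = qx + r$ con $0 \leq r < x$. La coprimalidad se transfiere al par $(r, x)$: cualquier $u \in M$ que divida a $x$ y a $r$ también divide a $qx + r = y$, luego $u = 1$. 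Si $r = 0$, entonces $x \mid y$ y la coprimalidad fuerza $x = 1$ (caso trivial ya cubierto). De lo contrario, $1 \leq r < x < y$ y el par $(r, x)$ cae en la hipótesis inductiva, invocada con la nueva ``$y$'' igual a $x$.

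\medskip

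\noindent Por la hipótesis inductiva aplicada a $(r, x)$, existen $a', b' \in M$ con $a'r = b'x + 1$ o $b'x = a'r + 1$. Sustituyendo $r = y - qx$ (despejado de $y = qx + r$) en cada caso se obtiene, respectivamente, $a'y = (a'q + b')x + 1$ y $(b' + a'q)x = a'y + 1$, es decir, una disyunción del tipo requerido para el par $(x, y)$. Todo el cómputo consiste en usar la asociatividad y la distributividad sobre $M$, más la observación de que los coeficientes obtenidos son no negativos. El obstáculo principal es justamente esta ausencia de sustracción en PA: hay que arrastrar explícitamente la disyunción ``$\pm 1$'' por el algoritmo euclidiano, observando que cada iteración \emph{alterna} entre los dos disyuntos (el signo se voltea a cada paso), por lo que la hipótesis inductiva debe formularse como disyunción y ambos subcasos verificarse cuidadosamente para mantener la no-negatividad de los términos dentro de $M$.
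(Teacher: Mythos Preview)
Your argument is correct. Note, however, that the paper itself supplies no proof of this theorem: the statement is immediately followed by $\square$, indicating that the proof is omitted (the surrounding discussion points to \cite{kaye} for details). So there is no in-paper argument to compare yours against.

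On its own merits your proof is sound and is in fact the standard route (Euclid's algorithm formalized in PA, as in Kaye's treatment). The strong-induction schema you invoke is derivable in PA from the ordinary induction axiom by applying induction to $\psi(y) := \forall y' \leq y\,\varphi(y')$, so nothing is hidden there. The delicate point you flag---the absence of subtraction in $M$---is handled correctly by carrying the disjunction $ax = by+1 \vee by = ax+1$ through the Euclidean recursion; your check that each iteration flips the disjunct while keeping all coefficients non-negative is accurate (from $a'r = b'x+1$ and $y = qx+r$ one gets $a'y = (a'q+b')x+1$ by pure addition and distributivity, and symmetrically for the other case). The extraction of the inverse in the second disjunct via $z = (y-1)a$ also checks out: $b \geq 1$ is forced since $b = 0$ would give $0 = ax+1$, and together with $y \geq 2$ this guarantees $(y-1)b - 1 \in M$. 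Your proof would serve perfectly well as a replacement for the one the paper omits.
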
 

\noindent Una consecuencia importante del Teormea de Bézout es que en cualquier modelo de PA, las nociones de primo e irreducible, son equivalentes. Formalmente, para todo $x\in M\models\mathrm{PA}$, tenemos $\mathrm{PA}\vdash\mathrm{Prim}(x)\leftrightarrow\mathrm{Irred}(x)$ \cite{kaye}. Sea $T$ una  teoría. Sea $\mathcal{N}$ un modelo de $T$. Se dice que $\mathcal{N}$ es un \textbf{modelo primo} de $T$, si para todo $\mathcal{M}\models T$, $\mathcal{N}$ es una subestructura elemental de $\mathcal{M}$.

\begin{lemma}[Test de Tarski-Vaught]\label{test-tarski-vaught_lemma} Sea $\mathcal{M}$ una sub-estructura de $\mathcal{N}$. Entonces, $\mathcal{M}$ es elemental si, y sólo si, para cada fórmula $\psi(v, \overline{w})$ y $\overline{a}\in M^{n}$, si hay $b\in N$ tal que $\mathcal{N}\models\psi(b, \overline{a})$, entonces hay $c\in M$ tal que $\mathcal{N}\models\psi(c, \overline{a})$.   \hfill $\square$
\end{lemma}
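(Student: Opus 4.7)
El argumento natural es por inducción sobre la complejidad de las fórmulas del lenguaje $\mathcal{L}$. La implicación directa es inmediata: si $\mathcal{M}$ es una subestructura elemental de $\mathcal{N}$ y existe $b\in N$ con $\mathcal{N}\models\psi(b,\overline{a})$, entonces $\mathcal{N}\models\exists v\,\psi(v,\overline{a})$. Por elementalidad, $\mathcal{M}\models\exists v\,\psi(v,\overline{a})$, luego hay $c\in M$ con $\mathcal{M}\models\psi(c,\overline{a})$ y, aplicando de nuevo la elementalidad, $\mathcal{N}\models\psi(c,\overline{a})$, como se pedía.

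El contenido del lema radica en el recíproco. Asumiendo la condición de Tarski-Vaught, pruebo por inducción sobre $\varphi$ que, para toda tupla $\overline{a}\in M^{n}$,
\[
\mathcal{M}\models\varphi(\overline{a})\ \Longleftrightarrow\ \mathcal{N}\models\varphi(\overline{a}).
\]
Caso atómico: como $\mathcal{M}$ es subestructura de $\mathcal{N}$, las interpretaciones de los símbolos de $\mathcal{L}$ coinciden sobre $M$; una pequeña inducción auxiliar sobre términos da $t^{\mathcal{M}}[\overline{a}]=t^{\mathcal{N}}[\overline{a}]$, de donde las ecuaciones $t_{1}=t_{2}$ y las fórmulas $t_{1}<t_{2}$ (únicas atómicas en $\mathcal{L}_{A}$) tienen el mismo valor de verdad en ambas estructuras. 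Los casos booleanos $\neg,\vee,\wedge$ se siguen formalmente de la hipótesis de inducción.

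El paso decisivo es el cuantificador existencial $\varphi=\exists v\,\psi(v,\overline{w})$. De izquierda a derecha basta tomar un testigo $c\in M$ para $\psi$ en $\mathcal{M}$ y aplicar la hipótesis inductiva para transferirlo a $\mathcal{N}$. De derecha a izquierda, si $\mathcal{N}\models\exists v\,\psi(v,\overline{a})$, existe $b\in N$ con $\mathcal{N}\models\psi(b,\overline{a})$; aquí es exactamente donde se emplea la hipótesis de Tarski-Vaught para obtener $c\in M$ con $\mathcal{N}\models\psi(c,\overline{a})$, y la hipótesis inductiva concluye $\mathcal{M}\models\psi(c,\overline{a})$, luego $\mathcal{M}\models\exists v\,\psi(v,\overline{a})$. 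El caso $\forall v\,\psi$ se reduce al existencial vía la equivalencia $\forall v\,\psi\equiv\neg\exists v\,\neg\psi$.

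El único obstáculo real es de índole organizativa: debe llevarse con cuidado la inducción auxiliar sobre términos para el caso atómico y aplicar la hipótesis de Tarski-Vaught a la fórmula $\psi(v,\overline{w})$ con parámetros precisamente en $M$, evitando introducir parámetros de $N\setminus M$. Salvo ese control, todos los pasos se reducen a manipulaciones directas a partir de la definición de subestructura y del paso inductivo.
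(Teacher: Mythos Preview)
Your argument is the standard and correct proof of the Tarski--Vaught test: the forward direction is immediate from elementarity, and the converse proceeds by induction on formula complexity, with the Tarski--Vaught hypothesis used precisely at the existential step to pull a witness from $N$ back into $M$ before applying the inductive hypothesis. All steps are sound, including the auxiliary induction on terms for the atomic case and the reduction of $\forall$ to $\exists$ via negation.

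Note, however, that the paper does not actually supply a proof of this lemma: it is stated and immediately closed with a $\square$, so there is no paper proof to compare against. Your proposal thus fills in what the paper omits, and does so by the textbook route (see, e.g., Marker or Kaye, both cited in the paper).
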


\begin{theorem}\label{modelo-primo_theorem}
Sea $\mathcal{M}\models\mathrm{Th}(\mathbb{N})$, entonces, podemos ver a $\mathcal{N}$ como un segmento inicial de $\mathcal{M}$, esta inmersión es elemental, i.e., $\mathcal{N}$ es un modelo primo de $\mathrm{Th}(\mathbb{N})$.
\end{theorem}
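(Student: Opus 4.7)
El plan consiste en construir explícitamente la inmersión $\iota:\mathbb{N}\to\mathcal{M}$, verificar que su imagen es un segmento inicial, y finalmente aplicar el Test de Tarski-Vaught (Lema \ref{test-tarski-vaught_lemma}) para concluir la elementalidad. Para cada $n\in\omega$, denotemos por $\underline{n}$ el término cerrado $\underline{1}+\underline{1}+\cdots+\underline{1}$ ($n$ veces, y $\underline{0}$ si $n=0$), y definamos $\iota(n):=\underline{n}^{\mathcal{M}}$. Como las sentencias $\underline{n}+\underline{m}=\underline{n+m}$, $\underline{n}\cdot\underline{m}=\underline{nm}$, $\underline{n}<\underline{m}$ (si $n<m$) y $\underline{n}\neq\underline{m}$ (si $n\neq m$) son verdaderas en $\mathbb{N}$, están en $\mathrm{Th}(\mathbb{N})$ y por tanto son ciertas en $\mathcal{M}$; de aquí $\iota$ es una inmersión de $\mathcal{L}_{A}$-estructuras.

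Para ver que $\iota(\omega)$ es un segmento inicial de $\mathcal{M}$, observemos que, para cada $n\in\omega$, la sentencia
\[
\forall x\big(x<\underline{n}\to (x=\underline{0}\vee x=\underline{1}\vee\cdots\vee x=\underline{n-1})\big)
\]
es verdadera en $\mathbb{N}$, luego en $\mathcal{M}$. Por tanto, si $a\in M$ satisface $a<_{\mathcal{M}}\iota(n)$, necesariamente $a=\iota(k)$ para algún $k<n$, es decir, $\iota(\omega)$ es cerrado hacia abajo bajo $<_{\mathcal{M}}$.

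Para la elementalidad, aplicamos el Lema \ref{test-tarski-vaught_lemma} a la subestructura $\iota(\mathbb{N})\subset\mathcal{M}$. Sean $\psi(v,\overline{w})$ una fórmula y $\overline{a}=(\iota(n_{1}),\ldots,\iota(n_{k}))\in\iota(\omega)^{k}$, y supongamos que existe $b\in M$ con $\mathcal{M}\models\psi(b,\overline{a})$. Entonces $\mathcal{M}\models\exists v\,\psi(v,\underline{n_{1}},\ldots,\underline{n_{k}})$, que es ya una sentencia del lenguaje. Como $\mathrm{Th}(\mathbb{N})$ es completa y $\mathcal{M}$ la modela, dicha sentencia pertenece a $\mathrm{Th}(\mathbb{N})$ (de lo contrario su negación estaría allí y $\mathcal{M}$ la satisfaría, contradicción). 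Luego $\mathbb{N}\models\exists v\,\psi(v,n_{1},\ldots,n_{k})$; tomemos $c\in\omega$ testigo. Entonces $\psi(\underline{c},\underline{n_{1}},\ldots,\underline{n_{k}})$ es una sentencia verdadera en $\mathbb{N}$, luego en $\mathcal{M}$, y $\iota(c)\in\iota(\omega)$ es el testigo que requiere el test.

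El paso verdaderamente sustancial es el tercero: el traspaso de sentencias entre $\mathbb{N}$ y $\mathcal{M}$ por completitud de $\mathrm{Th}(\mathbb{N})$, combinado con la capacidad de \textit{nombrar} los elementos de $\iota(\omega)$ mediante los términos cerrados $\underline{n}$ sin introducir constantes nuevas al lenguaje. Los dos primeros pasos son esencialmente verificaciones de identidades aritméticas elementales válidas en $\mathbb{N}$ y transferidas a $\mathcal{M}$ por pertenecer a $\mathrm{Th}(\mathbb{N})$; es la interacción entre los numerales y la completitud lo que cierra el argumento de Tarski-Vaught.
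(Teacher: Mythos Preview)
Your proof is correct and follows essentially the same route as the paper's: replace parameters by numerals, transfer the resulting existential sentence between $\mathcal{M}$ and $\mathbb{N}$ via $\mathcal{M}\equiv\mathbb{N}$, pick a witness $s\in\omega$, and transfer $\psi(\underline{s},\underline{n_{1}},\ldots,\underline{n_{k}})$ back to $\mathcal{M}$ to feed the Tarski--Vaught test. Your write-up is in fact tidier than the paper's, since you make the embedding and the initial-segment verification explicit rather than deferring them.
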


\begin{proof}
 Sea $\psi(v, w_{1}, ..., w_{m})$ una $\mathcal{L}_{A}$-fórmula, y sean $n_{1}, ..., n_{m}\in \mathbb{N}$ tal que $\mathcal{M}\models\psi(v, \overline{n})$. Sea $\varphi$ la $\mathcal{L}_{A}$-sentencia

\[
\exists v\hspace{0.3cm}\psi(v, \underbrace{1+ \ldots +1}_{n_{1}-\mathrm{veces}}, \ldots, \underbrace{1+ \ldots +1}_{n_{m}-\mathrm{veces}}).
\]

\noindent Entonces, $\mathcal{M}\models\psi$ y $\mathbb{N}\models \psi$ ya que $\mathcal{M}\equiv\mathbb{N}$. Pero entonces, para algún $s\in\mathbb{N}$ tenemos que 

\[
\mathbb{N}\models\psi(s, \underbrace{1+ \ldots +1}_{n_{1}-\mathrm{veces}}, \ldots, \underbrace{1+ \ldots +1}_{n_{m}-\mathrm{veces}})
\]

\noindent y

\[
\mathbb{N}\models\psi(\underbrace{1+\ldots+1}_{s-\mathrm{veces}}, \underbrace{1+ \ldots +1}_{n_{1}-\mathrm{veces}}, \ldots, \underbrace{1+ \ldots +1}_{n_{m}-\mathrm{veces}}).
\]

\noindent Como la anterior es una $\mathcal{L}_{A}$-sentencia, 

\[
\mathcal{M}\models\psi(\underbrace{1+\ldots+1}_{s-\mathrm{veces}}, \underbrace{1+ \ldots +1}_{n_{1}-\mathrm{veces}}, \ldots, \underbrace{1+ \ldots +1}_{n_{m}-\mathrm{veces}}).
\]

\noindent y

\[
\mathcal{M}\models\psi(s, n_{1}, \ldots, n_{m}).
\]

\noindent Concluimos que $\mathbb{N}$ es un modelo primo de $\mathrm{Th}(\mathbb{N})$.
\end{proof}

\section{Incompletitud}

\noindent  En la sección anterior, dado el conjunto $\omega$ pensamos en cómo lograr una descripción de este conjunto en concordancia con las funciones y relaciones que allí nos interesa estudiar, a saber, la suma, el producto y el orden. Obviamente la descripción brindada por $\mathrm{Th}(\mathbb{N})$ es una descripción completa pero no es de total agrado tener una teoría con tantos axiomas. Ahora la pregunta sería si el sistema de PA sí es completo, la respuesta es No, y está dada en los Teoremas de la Incompletitud de Gödel. El proposito de esta sección es la introducción de dichos teoremas. 

\begin{definition}\rm La \textbf{clase de funciones recursivas parciales} $\mathcal{C}$ es la clase más pequeña de funciones  $f:A\to\mathbb{N}$ para algún $A\subset\mathbb{N}^{k}$ y algún $k\geq1$ tal que

\begin{itemize}
\item[i)]  $\mathcal{C}$ contiene a 0 y las \emph{funciones sucesor}: $0(x)=0, \forall x\in\mathbb{N}$ y $s(x)=x+1, \forall x\in\mathbb{N}$. Para cada $1\leq i\leq n\in\mathbb{N}$, $\mathcal{C}$ contiene las \emph{funciones proyección} $u_{i}^{n}(x_{1}, ..., x_{n})=x_{i}$. 
\item[ii)] Si $f(x_{1}, .., x_{k})$ y $g_{1}(\overline{y}), ..., g_{k}(\overline{y})$ están en $\mathcal{C}$, entonces  $h(\overline{y})=f(g_{1}(\overline{y}), ..., g_{k}(\overline{y}))$ está en $\mathcal{C}$ con la convención de que si $g_{1}(\overline{y}), ..., g_{k}(\overline{y})$ es indefinida, entonces así lo es $h$, o si  $f(g_{1}(\overline{y}), ..., g_{k}(\overline{y}))$ no lo es. 
\item[iii)] $\mathcal{C}$ es cerrado bajo \emph{recursión primitiva}, i.e., si $f(\overline{x})$ y $g(\overline{x}, y, z)$ están en $\mathcal{C}$, entonces así lo es $h(\overline{x}, y)$ definida por  $
h(\overline{x}, 0)=f(\overline{x})$ y  
\[
h(\overline{x}, y+1)=\left\{\begin{array}{ccc}
g(\overline{x}, y, h(\overline{x}, y))& & \\
\mathrm{indefinido}&\mathrm{si}&h(\overline{x}, y)\hspace{0.2cm}\mathrm{lo}\hspace*{0.2cm}\mathrm{es.}
\end{array}\right.
\]
\item[iv)] $\mathcal{C}$ es cerrado bajo, \emph{minimización}, i.e., si $g(\overline{x}, y)$ está en $\mathcal{C}$, entonces así lo es $h(\overline{x})=(\mu y)(g(\overline{x}, y)=0)$ definida por  
$h(\overline{x})=$ el menor $y$ tal que $g(\overline{x}, 0), g(\overline{x}, 1), ...., g(\overline{x}, y)$ están todas definidas y $g(\overline{x}, y)=0$. $h(\overline{x})$ indefinida, si no hay tal $y$.
\end{itemize}
  
\end{definition}

\begin{definition}\rm
La \textbf{clase de funciones primitivas recursivas} $\mathcal{PR}$ es la clase de funciones más pequeña  que tiene al 0, a $s$ y $u_{i}^{n}$ para cada $1\leq i\leq n$ y cerrado bajo \emph{composición y recursión primitiva}.  Las \textbf{funciones recursivas} son funciones totales $f:\mathbb{N}^{k}\to\mathbb{N}$ para cierto $k>0$ en $\mathcal{C}$.  Un conjunto $A\subset\mathbb{N}^{k}$ es \textbf{recursivo} si, y sólo si, lo es su \emph{función caracteristica}:

\[
\chi_{A}(\overline{x}):=\left\{\begin{array}{ccc}
1&\mathrm{si}&\overline{x}\in A\\
0&\mathrm{si}&\overline{x}\not\in A
\end{array}\right.
\]

\noindent es recursiva. Similarmente, $A$ es \textbf{recursivo}, si, y sólo si, $\chi_{A}\in\mathcal{PR}$.
\end{definition}
 
\noindent Un conjunto $A\subset\mathbb{N}^{k}$ es \textbf{recursivamente enumerable} si, y sólo si, $A$ es el dominio de alguna función recursiva parcial $f$, i.e., para todo $\overline{x}\in\mathbb{N}^{k}$ ($f(\overline{x})$ está definido $\Leftrightarrow \overline{x}\in A$).  Algunos ejemplos de funciones primitivas recursivas son: $+, \cdot, \max, \min$, etc.
 
\begin{definition}\rm
Una relación $R \subset\mathbb{N}^{n}$ es \textbf{primitiva recursiva} si su \emph{función característica} 
 
 \[
 \chi_{R}(x)=\left\{
 \begin{array}{ccc}
 0&\mathrm{si}&R(x)\\
 1&\mathrm{si}&\neg R(x)
 \end{array}\right.
 \]
 
\noindent es primitiva recursiva. 
\end{definition}

\begin{theorem}[Gödel, 1930] Cada problema de la forma $\forall xFx$ con $F$ recursiva primitiva es reducible a la cuestión de si una determinada fórmula de la lógica pura de primer orden es satisfacible o no (es decir, para cada $F$ recursiva primitiva podemos encontrar una fórmula de la lógica pura de primer orden, cuya satisfacibilidad es equivalente a la verdad de $\forall xFx$). \hfill $\square$
\end{theorem}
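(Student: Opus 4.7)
El plan es asociar a cada predicado primitivo recursivo $F$ una $\mathcal{L}_A$-sentencia $\Phi_F$ cuya satisfacibilidad equivalga a la verdad de $\forall x\, F(x)$ en $\mathbb{N}$. El ingrediente clave es la \emph{representabilidad} de los predicados primitivos recursivos en un fragmento finito de la aritmética, por ejemplo la aritmética de Robinson $Q$: para cada $F$ primitivo recursivo existe una $\mathcal{L}_A$-fórmula $\varphi_F(x)$ tal que, para cada $n\in\mathbb{N}$, $Q\vdash\varphi_F(\overline{n})$ cuando $F(n)$ se cumple, y $Q\vdash\neg\varphi_F(\overline{n})$ en caso contrario; aquí $\overline{n}$ denota el término $\underbrace{1+\ldots+1}_{n-\mathrm{veces}}$.

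Primero definiría $\Phi_F$ como la conjunción de los (finitamente muchos) axiomas de $Q$ junto con la clausura universal $\forall x\,\varphi_F(x)$; esta es una única sentencia del lenguaje $\mathcal{L}_A$. Afirmo que $\Phi_F$ es satisfacible si, y sólo si, $F(n)$ se cumple para todo $n\in\mathbb{N}$. Para la implicación ``sólo si'', supóngase $\mathcal{M}\models\Phi_F$: si existiese algún $n\in\mathbb{N}$ con $\neg F(n)$, la representabilidad forzaría $Q\vdash\neg\varphi_F(\overline{n})$, pero $\mathcal{M}\models\forall x\,\varphi_F(x)$ entrega $\mathcal{M}\models\varphi_F(\overline{n})$, contradiciendo $\mathcal{M}\models Q$. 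Para la recíproca, $\mathbb{N}$ mismo, bajo la hipótesis $\forall x\, F(x)$, atestigua la satisfacibilidad como modelo tanto de $Q$ como de $\forall x\,\varphi_F(x)$.

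El corazón del argumento, y el obstáculo principal, es precisamente el teorema de representabilidad. Su demostración pide formalizar la recursión primitiva dentro de $Q$, lo cual a su vez obliga a introducir la $\beta$-función de Gödel (apoyada en el Teorema Chino del Resto) para codificar sucesiones finitas mediante pares de números naturales; esto permite reemplazar una definición por recursión primitiva por una única fórmula existencial que afirma la existencia del código de la sucesión de cómputo intermedio. Si se desea enunciar el resultado pasando por la noción de consistencia en lugar de la de satisfacibilidad, el puente lo provee el Teorema de Completitud de Gödel aludido previamente.
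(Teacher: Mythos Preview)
The paper does not actually prove this theorem: the statement is followed immediately by \hfill~$\square$, so there is no argument in the paper to compare against. Your proposal therefore supplies what the paper omits, and the outline you give is the standard and correct one: pick a finitely axiomatizable fragment of arithmetic (Robinson's $Q$ is the natural choice), invoke representability of primitive recursive predicates to obtain $\varphi_F$, and take $\Phi_F$ to be the conjunction of the $Q$-axioms with $\forall x\,\varphi_F(x)$. Both directions of the equivalence are argued correctly; in particular, for the ``if'' direction you are implicitly using that representability in $Q$ transfers to truth in $\mathbb{N}$ because $\mathbb{N}\models Q$, so $\mathbb{N}\models\varphi_F(\overline{n})$ for every $n$ and hence $\mathbb{N}\models\forall x\,\varphi_F(x)$.

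Two small remarks. First, the phrase \emph{l\'ogica pura de primer orden} in G\"odel's original formulation refers to the pure predicate calculus (relation symbols only, no distinguished function or constant symbols). Your $\Phi_F$ lives in $\mathcal{L}_A$, which has $+,\cdot,0,1$; strictly speaking one should then apply the routine elimination of function symbols in favour of their graphs to land in a purely relational language. This is a mechanical translation and does not affect the substance of your argument. Second, you correctly flag that the real work is the representability theorem and the $\beta$-function machinery; since the paper later introduces the $\beta$-function and G\"odel coding anyway, it would be consistent with the exposition to defer that part, as you do.
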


\noindent Como consecuencia de esta, Gödel prueba que, en particular, para PA, se cumple el teorema siguiente: 
\medskip

\begin{theorem}[Primer Teorema de Incompletitud, Gödel] Hay problemas de la lógica pura de predicados de primer orden, es decir,  fórmulas de la lógica pura de primer orden, respecto a las cuales no podemos probar ni su validez ni la existencia de un contraejemplo.  \hfill $\square$
\end{theorem}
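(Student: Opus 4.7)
El plan es aplicar el teorema anterior de reducción a una fórmula aritmética de la forma $\forall y\, F(y)$, con $F$ primitiva recursiva, que sea verdadera en $\mathbb{N}$ pero indecidible en PA. La construcción de tal fórmula es el corazón del argumento de Gödel y se apoya enteramente en la \emph{codificación de Gödel} de la sintaxis.

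Primero, aritmetizaría la sintaxis de PA asignando a cada símbolo, término, fórmula y sucesión finita de fórmulas de $\mathcal{L}_A$ un número natural, su \emph{número de Gödel} $\lceil \cdot\rceil$. Fijando una codificación primitiva recursiva de sucesiones (vía el $\beta$-lema de Gödel o mediante pares de Cantor), verifico que el predicado $\mathrm{Prf}(y,x)$ que expresa ``$y$ es el código de una demostración en PA de la fórmula con código $x$'' está en $\mathcal{PR}$. Esto requiere descomponer la noción de prueba en cláusulas primitivas recursivas (ser axioma lógico, ser axioma de PA, ser aplicación de modus ponens o de generalización), apoyándose en que sustitución, concatenación y acceso componente a componente son primitivas recursivas. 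Este paso es el más laborioso y, conceptualmente, el principal obstáculo técnico de toda la prueba.

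Segundo, invocaría el lema del punto fijo para construir una sentencia $G$ de $\mathcal{L}_A$ tal que
\[
\mathrm{PA}\vdash G\leftrightarrow \forall y\,\neg\mathrm{Prf}(y,\lceil G\rceil).
\]
Suponiendo $\mathrm{PA}$ consistente, si $\mathrm{PA}\vdash G$, alguna prueba tendría código $n$, con lo cual $\mathrm{Prf}(n,\lceil G\rceil)$ sería verdadera y, por ser $\mathrm{Prf}$ primitiva recursiva (y por tanto representable en PA), sería demostrable en PA, contradiciendo $G$. Asumiendo además $\omega$-consistencia, un argumento análogo descarta $\mathrm{PA}\vdash\neg G$. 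Así la sentencia $\forall y\,\neg\mathrm{Prf}(y,\lceil G\rceil)$ es verdadera en $\mathbb{N}$ pero indecidible en PA.

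Finalmente, aplicaría el teorema precedente de reducción a $F(y):=\neg\mathrm{Prf}(y,\lceil G\rceil)$, obteniendo una fórmula $\Phi$ de la lógica pura de primer orden cuya satisfacibilidad es equivalente a la verdad de $\forall y\,F(y)$. Como esta última no puede ser formalmente decidida, tampoco podremos probar la validez de $\neg\Phi$ ni exhibir un modelo que constituya un contraejemplo, que es justamente lo afirmado. Los pasos de reducción y diagonalización son conceptualmente rápidos; el trabajo duro se concentra en la aritmetización y en la verificación minuciosa de que $\mathrm{Prf}$ pertenece a $\mathcal{PR}$.
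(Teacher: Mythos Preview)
The paper does not actually prove this theorem: it is stated with a terminal $\square$ and introduced only as a consequence (``Como consecuencia de esta\ldots'') of the preceding reduction theorem, with the details deferred to \cite{godelii}. So your sketch is not competing against any argument in the text; it is supplying one, and it follows exactly the line the paper gestures at---arithmetize the syntax, verify that $\mathrm{Prf}$ is primitive recursive, diagonalize to obtain $G$, and feed $F(y):=\neg\mathrm{Prf}(y,\lceil G\rceil)$ into the reduction theorem.

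The one place to tighten is your final paragraph. From ``$\Phi$ satisfiable $\iff \forall y\,F(y)$ true'' you jump to ``we cannot prove the validity of $\neg\Phi$ nor exhibit a counterexample''. But since $\forall y\,F(y)$ \emph{is} true in $\mathbb{N}$, $\neg\Phi$ is in fact not valid and \emph{does} have a counterexample; what you need to argue is that no proof in the relevant system (and more generally no effective procedure) can certify either alternative, because doing so would decide the $\Pi_1$ sentence $G$. Make explicit which formula ($\Phi$ or $\neg\Phi$) plays the role promised by the statement and in which sense ``no podemos probar'' is to be read. Once that link is spelled out, the argument is complete and matches the intended derivation.
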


\noindent  El anterior hecho da la impresión de entrar en contradicción con lo que el mismo Gödel prueba en 1930, y es que toda fórmula de la lógica de predicados de primer orden, o es válida o posee un contraejemplo. Sin embargo, lo que aquí se acota es que no siempre es posible \emph{demostrar} la existencia de dicho contraejemplo. No, por lo menos, para los sistemas formales que se han tomado en consideración en \cite{godelii}. 
\medskip

\noindent Para finalizar esta parte, acotamos este resultado debido a Gödel, del que se desprende, que supuesta al consistencia de PA, no podemos conseguir una prueba de esta dentro del mismo sistema. Y del que nos valdremos para probar que el Principio combinatorio de Paris-Harrington es independiente de PA.
\medskip

\begin{theorem}[Segundo Teorema de Incompletitud de Gödel] Sea $K$ una clase recursiva y consistente cualquiera de fórmulas. Entonces ocurre que la sentencia que dice que $K$ es consistente no es $K$-deducible.  \hfill $\square$
\end{theorem}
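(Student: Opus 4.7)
El plan es seguir el camino clásico de Gödel-Bernays-Löb, en cuatro pasos. En primer lugar, se realiza la \emph{aritmetización de la sintaxis}: se asigna a cada símbolo, término, fórmula y prueba de $K$ un código numérico (el \emph{número de Gödel}), de modo que las relaciones sintácticas fundamentales queden representadas por funciones primitivas recursivas. Como $K$ es recursiva por hipótesis, el predicado ``$y$ es el código de una demostración en $K$ de la fórmula con código $x$'' —que denotaré $\mathrm{Pr}_{K}(x, y)$— resulta primitivo recursivo y, por tanto, representable en PA. Defino el \emph{predicado de demostrabilidad} $\mathrm{Prov}_{K}(x):=\exists y\,\mathrm{Pr}_{K}(x, y)$ y la sentencia de consistencia $\mathrm{Con}(K):=\neg\mathrm{Prov}_{K}(\lceil 0=1\rceil)$.

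En segundo lugar, aplicaré el \emph{Lema de Punto Fijo} (diagonalización) para obtener una sentencia $G$ que cumpla $K\vdash G\leftrightarrow\neg\mathrm{Prov}_{K}(\lceil G\rceil)$; es decir, $G$ afirma su propia no-demostrabilidad. El argumento informal del Primer Teorema de Incompletitud dice entonces: si $K\vdash G$, existiría un código de una demostración, luego $K\vdash\mathrm{Prov}_{K}(\lceil G\rceil)$, y por la equivalencia $K\vdash\neg G$, contradiciendo la consistencia de $K$. Por consiguiente, asumiendo $\mathrm{Con}(K)$, $K\nvdash G$.

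El tercer paso —el corazón del Segundo Teorema— consiste en \emph{formalizar el argumento anterior dentro de $K$}. Aquí intervienen las \emph{condiciones de derivabilidad de Hilbert-Bernays-Löb}:
\begin{itemize}
\item[(D1)] Si $K\vdash\varphi$, entonces $K\vdash\mathrm{Prov}_{K}(\lceil\varphi\rceil)$.
\item[(D2)] $K\vdash\mathrm{Prov}_{K}(\lceil\varphi\to\psi\rceil)\to(\mathrm{Prov}_{K}(\lceil\varphi\rceil)\to\mathrm{Prov}_{K}(\lceil\psi\rceil))$.
\item[(D3)] $K\vdash\mathrm{Prov}_{K}(\lceil\varphi\rceil)\to\mathrm{Prov}_{K}(\lceil\mathrm{Prov}_{K}(\lceil\varphi\rceil)\rceil)$.
\end{itemize}
Usando (D1)-(D3) se internaliza el argumento anterior hasta obtener $K\vdash\mathrm{Prov}_{K}(\lceil G\rceil)\to\mathrm{Prov}_{K}(\lceil 0=1\rceil)$. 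Contrapuesto, $K\vdash\mathrm{Con}(K)\to\neg\mathrm{Prov}_{K}(\lceil G\rceil)$, y por la equivalencia de punto fijo, $K\vdash\mathrm{Con}(K)\to G$. Si se tuviera $K\vdash\mathrm{Con}(K)$, entonces $K\vdash G$, contradiciendo el Primer Teorema. Concluimos $K\nvdash\mathrm{Con}(K)$.

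El principal obstáculo del plan es, con mucho, la verificación de la condición (D3). Esta exige internalizar la prueba de (D1) dentro de $K$, lo que a su vez descansa sobre la formalización de la $\Sigma_{1}$-completitud en PA (toda sentencia $\Sigma_{1}$ verdadera es demostrable en PA, y este hecho mismo es demostrable en PA). El desarrollo es técnicamente extenso y depende de un control cuidadoso de los códigos de Gödel y de la representabilidad de funciones primitivas recursivas; por brevedad suele tomarse por sentado, y remito al lector interesado a \cite{kaye} para la ejecución detallada.
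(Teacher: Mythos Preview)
Your sketch is the standard Hilbert--Bernays--L\"ob argument and is correct at the level of detail you give: diagonalize to obtain a G\"odel sentence $G$, use the derivability conditions (D1)--(D3) to internalize the First Incompleteness argument as $K\vdash\mathrm{Con}(K)\to G$, and conclude $K\nvdash\mathrm{Con}(K)$. You are also right to flag (D3), i.e.\ formalized $\Sigma_{1}$-completeness, as the real work. One small caveat: your argument tacitly assumes that $K$ is strong enough to represent primitive recursive functions and to prove (D1)--(D3); the paper's hypothesis ``una clase recursiva y consistente cualquiera de f\'ormulas'' is stated loosely and should be read with that implicit strength requirement, which you might make explicit.

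As for the comparison: the paper does not actually prove this theorem. The statement is closed with \verb|\hfill $\square$| and no argument is given; it is quoted as a known result (attributed to G\"odel) and later restated in the form $T\nvdash\mathrm{Con}_{T}$, again without proof. So there is no alternative route in the paper to contrast with yours; your proposal simply supplies the classical proof that the paper omits.
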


\subsection{Numeración de Gödel}

\begin{definition}\rm
Asignaremos a cada símbolo $s$ de nuestro lenguaje $\mathcal{L}_{A}$, un único número natural $\#(s)$ llamado \textbf{código de Gödel} de $s$. Los números se asignan de la manera siguiente: $\#('0')=1, \#('1')=2, \#('+')=3, \#('\cdot')=4, \#('=')=5, \#('(')=6, \#(')')=7, \#('\to')=8, \#('\neg')=9, \#('\forall')=10, \#('x_{i}')=11+i$. 
\end{definition}

\begin{definition}\rm
Sea $\Lambda\equiv\alpha_{1}...\alpha_{n}$ una fórmula (o término) del lenguaje $\mathcal{L}_{A}$, el código de Gödel de $\Lambda$, corresponde a $\#(\Lambda)=2^{\#('\alpha_{1}')}\cdot3^{\#('\alpha_{2}')}\cdot ...\cdot p_{n}^{\#('\alpha_{n}')}=\prod_{i=1}^{n}p_{i}^{\#('\alpha_{i}')}$, donde $p_{n}$ representa el $n$-ésimo primo. Por el Teorema fundamental de la Aritmética este código es único para cada fórmula (o término) del lenguaje $\mathcal{L}_{A}$.
\end{definition}

\noindent De la misma manera podemos códificar conjuntos finitos como números. Por ejemplo, sea $A=\{a_{1}, a_{2}, a_{3}, ...., a_{n}\}\subset\mathbb{N}$. Entonces, 

\[
\#('A'):=2^{a_{1}}\cdot3^{a_{2}}\cdot5^{a_{3}}\cdot...\cdot p_{n}^{a_{n}}:=\prod_{1\leq i\leq n}p_{i}^{a_{i}}.
\]

\noindent Así, en lugar de hablar del conjunto $A$, podemos hablar de su código, y en lugar de su cardinal, hablar de la cantidad de factores primos en el código de $A$. Obsérvese, que hay una forma natural de exprersar el hecho, '$p$ es primo' simbolicamente. A saber, 

\[
\mathrm{Prim}(p):=p\neq0\wedge p\neq1\wedge\forall x\leq p[x|p\to p=x\vee x=1].
\]

\noindent Con $x|p$ la  definida según la fórmula $x|y\leftrightarrow\exists z\leq y[xz=y]$. Ahora, vamos a observar la expresión '$p_{n}$ es el $n$-ésimo primo'. Sea $g(y, x)$ una función. Definimos el $\mu$-\textbf{operador acotado} por: $f(x, x)=\mu y<x[g(y, x)=0]$ a ser $f(x, x)=$ el menor $y<x$ tal que $g(y, x)=0$, si tal $y$ existe; y $f(x, x)=x$ en otro caso. Así, podemos definir: 

\begin{center}
$p_{n}:=n$-ésimo primo: $p_{0}=2$, $p_{n+1}:=\mu x<p_{n}!+1[p_{n}<x\wedge \mathrm{Prim}(x)]$.
\end{center}

\begin{definition}\rm

\[
a\in \mathrm{Seq}\leftrightarrow a=1\vee (a>1\wedge\forall x\leq a[p_{x+1}|a\to p_{x}|a]).
\]

\[
\mathrm{Long}(a):=\left\{
\begin{array}{ccc}
0&\mathrm{si}&a\not\in\mathrm{Seq}\vee a=1\\
\mu x\leq a[p_{x}|a\wedge \neg(p_{x+1}|a)]&\mathrm{si}&a\in\mathrm{Seq}\wedge a\neq 1.
\end{array}\right.
\]

\[
(a)_{x}:=\mu y\leq x+1[p_{x}^{y+1}|a\wedge \neg(p_{x}^{y+2}|a)].
\]

\noindent Seq denota el conjunto de números que son secuencias. Y Long($a$) la longitud de $a$. 
\end{definition}

\begin{remark}\rm

\[
a=\prod_{i\leq\mathrm{Long}(a)}p_{i}^{(a)_{i}+1}.
\]

\end{remark}

\begin{definition}\rm Una fórmula $\varphi$ es $\Sigma_{n}$ (resp. $\Pi_{n}$) si, y sólo si, para alguna fórmula recursiva primitiva $\psi$, $\varphi=Q_{1}x_{1}...Q_{n}x_{n}\psi$, donde $Q_{1}=\exists$ (resp. $\forall$) y los cuantificadores se alternan en el tipo y son todos acotados. Escribimos $\varphi\in\Sigma_{n}$ (resp. $\Pi_{n}$) para índicar que $\varphi$ es una $\Sigma_{n}$ (resp. $\Pi_{n}$) fórmula, o podemos demostrar que es equivalente  a una de estas.  
\end{definition}

\begin{remark}\rm
Gödel introdujo una \textbf{Función par} $\langle, \rangle$ la cual asigna a cada $(x, y)\in\mathbb{N}\times\mathbb{N}$ un único número natural. Esta será útil para codificar particiones de conjutnos finitos en el Capítulo 4. La Función par de Gödel está dada por

\[
\begin{array}{cccc}
\langle, \rangle:&\mathbb{N}\times\mathbb{N}&\longrightarrow&\mathbb{N}\\
 & (x, y)&\longmapsto&\frac{(x+y)(x+y+1)}{2}+y.
\end{array}
\]
\end{remark}

\begin{lemma}\rm
Para cualquier cuatro números naturales dados  $x, y, u, v$ se cumple lo siguiente $\langle x, y\rangle=\langle u, v\rangle$ si, y sólo si, $x=y$ y $u=v$. \hfill $\square$
\end{lemma}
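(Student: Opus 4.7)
El plan es demostrar la inyectividad de la Función par de Cantor $\langle x,y\rangle = \frac{(x+y)(x+y+1)}{2}+y$. La dirección trivial (si $x=u$ y $y=v$, entonces $\langle x,y\rangle = \langle u,v\rangle$) es inmediata por sustitución, así que me enfocaré en la recíproca, cuya idea central es \emph{recuperar} la suma $x+y$ a partir del valor $\langle x,y\rangle$.

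Primero, denotaría $s=x+y$ y escribiría $T_s := \frac{s(s+1)}{2}$ (el $s$-ésimo número triangular), de modo que $\langle x,y\rangle = T_s + y$. Como $0\leq y\leq s$, se obtienen las cotas
\[
T_s \;\leq\; \langle x,y\rangle \;\leq\; T_s + s \;=\; \frac{s^{2}+3s}{2} \;<\; \frac{(s+1)(s+2)}{2} \;=\; T_{s+1}.
\]
Es decir, $\langle x,y\rangle$ cae en el intervalo $[T_s, T_{s+1})$. El paso clave es observar que los intervalos $\{[T_s, T_{s+1})\}_{s\in\mathbb{N}}$ son disjuntos dos a dos y cubren todo $\mathbb{N}$; esto se sigue de la identidad $T_{s+1}-T_s = s+1$, que garantiza que si $t>s$ entonces $T_t \geq T_{s+1}$, de manera que ningún elemento de $[T_s, T_{s+1})$ puede pertenecer a $[T_t, T_{t+1})$.

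Con esto, si $\langle x,y\rangle = \langle u,v\rangle$ y escribimos $s=x+y$ y $t=u+v$, el valor común cae simultáneamente en $[T_s,T_{s+1})$ y en $[T_t,T_{t+1})$, lo que fuerza $s=t$, i.e., $x+y=u+v$. Restando $T_s$ en la igualdad $T_s + y = T_t + v$ se obtiene $y=v$, y por consiguiente $x=u$. (Observo, además, que el enunciado tal como aparece impreso contiene un error tipográfico: la conclusión correcta es $x=u$ y $y=v$, no $x=y$ y $u=v$.)

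La única dificultad real radica en la justificación cuidadosa de la disjunción y cobertura de los intervalos triangulares, que, aunque elemental, requiere controlar las desigualdades entre $T_s$, $T_s+s$ y $T_{s+1}$; una vez establecido esto, todo el resto es álgebra directa.
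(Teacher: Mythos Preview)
Your argument is correct and is the standard proof of injectivity of the Cantor pairing function via the partition of $\mathbb{N}$ into the disjoint intervals $[T_s,T_{s+1})$; you also rightly flag the typographical slip in the statement (the intended conclusion is $x=u$ and $y=v$). The paper, however, offers no proof at all for this lemma---it is stated with a bare $\square$---so there is nothing to compare your approach against.
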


\noindent Ahora, introducimos  los principios de reflexión. Asumimos que el conjunto de códigos de los axiomas de $T$ (un sistema formal dado, por ejemplo uno que contenga PA) es recursivo primitivo. Así, tenemos:
\bigskip

\begin{small}
$\mathrm{Prov}_{T}(x, y)\leftrightarrow x\in\mathrm{Seq}\wedge\forall i\leq\mathrm{long}(x)[(x)_{i}$ es un axioma lógico

\hfill $\vee(x)_{i}$ es un axioma de $T$ $\vee\exists j k<i((x)_{k}=\mathrm{imp}((x)_{j}, (x)_{i}))]\wedge y=(x)_{\mathrm{long}(x)}$.

\[
\mathrm{Prov}_{T}(y)\leftrightarrow\exists x\mathrm{Prov}_{T}(x, y).
\]
\end{small}

\noindent Es decir, $\mathrm{Prov}_{T}(x, y)$ afirma que $x$ es el número de Gödel de una demostración de $y$ en $T$. Y $\mathrm{Prov}_{T}(y)$ afirma que $y$ es demostrable en $T$. Suele notarce el código de Gödel de una fórmula $\varphi$ de la forma siguiente: $\ulcorner\varphi\urcorner$.  Sea $\varphi$ una sentencia. Entonces, el Lema de Löb establece que  $T\vdash\mathrm{Pr}_{T}(\ulcorner\varphi\urcorner)\to\varphi$ si, y sólo si, $T\vdash\varphi$.

\begin{principle}[Reflexión local, Rfn($T$)]\rm Sea $\varphi$ una sentencia, $\mathrm{Pr}_{T}(\ulcorner\varphi\urcorner)\to\varphi.$
\end{principle}

\begin{principle}[Reflexión uniforme I,  RFN($T$)]\rm Sea $\varphi$ una fórmula con la sóla  variable libre $x$.  

$$\forall x\mathrm{Pr}_{T}(\ulcorner\varphi(x)\urcorner)\to\forall x\varphi(x).$$
\end{principle}

\begin{principle}[Reflexión uniforme II, RFN'($T$)]\rm Sea $\varphi$ un fórmula con la sóla variable libre  $x$. 

$$\forall x[\mathrm{Pr}_{T}(\ulcorner\varphi(x)\urcorner)\to\varphi(x)].$$
\end{principle}

\begin{theorem}\rm
Sobre $S$, los siguientes son equivalentes: 

\begin{itemize}
\item[i)] $\mathrm{Con}_{T}$,
\item[ii)] $\mathrm{Rfn}_{\prod_{2}}(T)$,
\item[iii)] $\mathrm{RFN}_{\prod_{1}}(T)$,
\item[iv)] $\mathrm{RFN'}_{\prod_{1}}(T)$,
\end{itemize}

\noindent  donde el segundo subíndice $\prod_{1}$ índica la restricción de la elección a $\varphi\in\prod_{1}$.   \hfill $\square$
\end{theorem}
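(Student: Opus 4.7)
The plan is to argue cyclically (iv) $\Rightarrow$ (iii) $\Rightarrow$ (i) $\Rightarrow$ (iv) and separately to close (i) $\Leftrightarrow$ (ii). The single technical tool driving every non-trivial implication is \emph{provable $\Sigma_{1}$-completeness of $T$ relative to $S$}: for each $\sigma(x)\in\Sigma_{1}$, $S\vdash \sigma(x)\to\mathrm{Pr}_{T}(\ulcorner\sigma(\dot{x})\urcorner)$; together with the derivability conditions for $\mathrm{Pr}_{T}$ (in particular, closure under modus ponens) this will suffice. Throughout, everything is performed inside $S$ so that we stay at the level of formal provability, not of truth in the standard model.

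The easy implications dispatch quickly. (iv) $\Rightarrow$ (iii) is pure logic: instantiating the $\forall x$ of (iv) against the hypothesis $\forall x\,\mathrm{Pr}_{T}(\ulcorner\varphi(\dot{x})\urcorner)$ of (iii) yields $\forall x\,\varphi(x)$. For (iii) $\Rightarrow$ (i) and (ii) $\Rightarrow$ (i) substitute $\varphi\equiv(0=1)$, which lies simultaneously in $\Pi_{1}$ and $\Pi_{2}$; the corresponding reflection schema collapses in each case to $\mathrm{Pr}_{T}(\ulcorner 0=1\urcorner)\to 0=1$, which is exactly $\mathrm{Con}_{T}$.

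The central non-trivial step is (i) $\Rightarrow$ (iv). Working in $S$, fix $\varphi(x)\in\Pi_{1}$ and assume $\mathrm{Con}_{T}$ together with $\mathrm{Pr}_{T}(\ulcorner\varphi(\dot{x})\urcorner)$; I claim $\varphi(x)$. Argue by contradiction: suppose $\neg\varphi(x)$. Since $\varphi(x)\in\Pi_{1}$, $\neg\varphi(x)\in\Sigma_{1}$, so provable $\Sigma_{1}$-completeness gives $\mathrm{Pr}_{T}(\ulcorner\neg\varphi(\dot{x})\urcorner)$. Combining this with $\mathrm{Pr}_{T}(\ulcorner\varphi(\dot{x})\urcorner)$ via modus ponens inside the provability predicate produces $\mathrm{Pr}_{T}(\ulcorner 0=1\urcorner)$, contradicting $\mathrm{Con}_{T}$. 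This yields (iv), and chaining with the trivial (iv) $\Rightarrow$ (iii) closes the cycle.

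The direction I expect to be the main obstacle is (i) $\Rightarrow$ (ii). Given $\varphi=\forall x\,\exists y\,\theta(x,y)\in\Pi_{2}$ with $\mathrm{Pr}_{T}(\ulcorner\varphi\urcorner)$, the naive route --- assume $\neg\varphi$, pick $n$ with the $\Pi_{1}$-fact $\forall y\,\neg\theta(\bar{n},y)$, and try to refute $\exists y\,\theta(\bar{n},y)$ in $T$ --- is blocked, because $\Pi_{1}$-truths need not be $T$-provable (that would be $1$-consistency). My plan is to exploit the already established (i) $\Rightarrow$ (iv) by extracting uniformly in $x$ the provability of the $\Sigma_{1}$-instance $\exists y\,\theta(\dot{x},y)$ from $\mathrm{Pr}_{T}(\ulcorner\forall x\,\exists y\,\theta(x,y)\urcorner)$ and then feeding the $\Pi_{1}$-formula $\forall y\,\neg\theta(x,y)\to\bot$ into $\mathrm{RFN}'_{\Pi_{1}}(T)$, arranging the whole inference so that $\mathrm{Con}_{T}$ --- and not the strictly stronger $1$-consistency --- does the work. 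Keeping this boundary clean is the crux: the whole content of the theorem is that, once reflection is formulated uniformly, mere consistency already suffices.
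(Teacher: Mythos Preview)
The paper states this theorem without proof (the trailing $\square$ marks it as cited, implicitly from Smory\'nski), so there is no argument in the paper to compare against. Your cycle (iv) $\Rightarrow$ (iii) $\Rightarrow$ (i) $\Rightarrow$ (iv) is correct and is the standard one; the appeal to provable $\Sigma_{1}$-completeness in (i) $\Rightarrow$ (iv) is exactly the right tool, and the trivial directions are handled cleanly.

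The direction (i) $\Rightarrow$ (ii), however, has a genuine gap that cannot be repaired along the lines you sketch. Your plan is to feed ``the $\Pi_{1}$-formula $\forall y\,\neg\theta(x,y)\to\bot$'' into $\mathrm{RFN}'_{\Pi_{1}}(T)$, but that formula is logically equivalent to $\exists y\,\theta(x,y)$, which is $\Sigma_{1}$, not $\Pi_{1}$; the $\Pi_{1}$-reflection you have already established therefore does not apply to it, and what you would actually need is $\Sigma_{1}$-reflection --- exactly the $1$-consistency you are trying to avoid. More decisively, the implication $\mathrm{Con}_{T}\to\mathrm{Rfn}_{\Pi_{2}}(T)$ is false in general: take $T=\mathrm{PA}+\neg\mathrm{Con}(\mathrm{PA})$. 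This $T$ is consistent (assuming $\mathrm{PA}$ is), so $\mathrm{Con}_{T}$ holds, yet $T$ proves the false $\Sigma_{1}$ sentence $\neg\mathrm{Con}(\mathrm{PA})$; since every $\Sigma_{1}$ sentence is trivially $\Pi_{2}$, $\mathrm{Rfn}_{\Pi_{2}}(T)$ fails for this $T$. The statement as printed almost certainly contains a misprint, and item (ii) should read $\mathrm{Rfn}_{\Pi_{1}}(T)$ --- which is the form in which the result appears in Smory\'nski. With that correction, (i) $\Rightarrow$ (ii) becomes the sentence case of the very argument you already gave for (i) $\Rightarrow$ (iv), and there is no obstacle at all.
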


\begin{remark}\rm
Sea $\mathrm{RFN}_{\prod_{k}}(T)$ la restricción de fórmulas en $T$ en $\prod_{k}$. Similarmente, se define $\mathrm{RFN}_{\sum_{k}}(T)$, RFN'$\sum_{k}(T)$ y RFN'$_{\prod_{k}}(T)$.
Finalmente, la noción de $\omega$-\textbf{consistencia} es aquella dada a nuestra razón, según la cuál se cumple pára $T$, siempre que se cumplan para $T$, las dos condiciones siguientes: $T\vdash\exists x\varphi(x)$, $T\vdash\neg\varphi(\overline{0}), \neg\varphi(\overline{1}), ...$.
\end{remark}

\begin{theorem}[Primer Teorema de Incompletitud de Gödel] Sea $T\vdash\varphi\leftrightarrow\neg\mathrm{Pr}_{T}(\ulcorner\varphi\urcorner)$. Entonces:

\begin{itemize}
\item[i)] $T\nvdash\varphi$, 
\item[ii)] bajo un supuesto adicional, $T\nvdash\neg\varphi$.
\end{itemize} 
\end{theorem}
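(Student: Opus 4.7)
El plan es deducir ambas afirmaciones por contradicción, apoyándose esencialmente en que $\mathrm{Prov}_T(x,y)$ es una relación primitiva recursiva y, por tanto, \emph{representable} en $T$ sobre los numerales: si $\mathrm{Prov}_T(m,k)$ vale en $\mathbb{N}$, entonces $T\vdash\mathrm{Prov}_T(\overline{m},\overline{k})$, y en caso contrario $T\vdash\neg\mathrm{Prov}_T(\overline{m},\overline{k})$. Ésta es la única maquinaria de fondo, y ya está disponible tras la Numeración de Gödel y la $\Sigma_1$-completitud implícita en la sección previa. Suponemos en todo el argumento, como es costumbre, que $T$ es consistente.

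Para (i), supongamos $T\vdash\varphi$ buscando contradicción. Entonces existe una demostración concreta de $\varphi$ en $T$; sea $n\in\mathbb{N}$ su código de Gödel. Por representabilidad obtenemos $T\vdash\mathrm{Prov}_T(\overline{n},\ulcorner\varphi\urcorner)$ y, generalizando existencialmente, $T\vdash\mathrm{Pr}_T(\ulcorner\varphi\urcorner)$. Por otro lado, la hipótesis $T\vdash\varphi\leftrightarrow\neg\mathrm{Pr}_T(\ulcorner\varphi\urcorner)$ junto con $T\vdash\varphi$ entrega $T\vdash\neg\mathrm{Pr}_T(\ulcorner\varphi\urcorner)$, lo que hace a $T$ inconsistente y contradice el supuesto.

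Para (ii), el \emph{supuesto adicional} debe tomarse como la $\omega$-consistencia de $T$, en el sentido precisado en la Observación inmediatamente anterior al enunciado. Supongamos $T\vdash\neg\varphi$. Por la equivalencia definitoria tenemos $T\vdash\mathrm{Pr}_T(\ulcorner\varphi\urcorner)$, es decir, $T\vdash\exists x\,\mathrm{Prov}_T(x,\ulcorner\varphi\urcorner)$. Ahora bien, por (i) sabemos que $T\nvdash\varphi$, lo que significa que ningún entero estándar $n$ es código de una demostración de $\varphi$ en $T$; por la representabilidad de $\mathrm{Prov}_T$ sobre sus numerales esto fuerza $T\vdash\neg\mathrm{Prov}_T(\overline{n},\ulcorner\varphi\urcorner)$ para \emph{cada} $n\in\mathbb{N}$. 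Tomando $\psi(x):=\mathrm{Prov}_T(x,\ulcorner\varphi\urcorner)$ tenemos exactamente la configuración prohibida por la $\omega$-consistencia: $T\vdash\exists x\,\psi(x)$ y $T\vdash\neg\psi(\overline{n})$ para todo $n$; contradicción.

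El punto formalmente delicado, y donde conviene detenerse al redactar, es la representabilidad de $\mathrm{Prov}_T$ en $T$: todo el argumento depende de cruzar el puente entre \emph{``existe una demostración''} en el metanivel y \emph{``$T$ prueba que existe una demostración''} internamente. Dicho cruce descansa en que $\mathrm{Prov}_T\in\mathcal{PR}$ (lo que requiere, tal y como se indicó antes del enunciado, que el conjunto de códigos de los axiomas de $T$ sea primitivo recursivo) combinado con la $\Sigma_1$-completitud de $T$. Admitida esta maquinaria, el resto es puro manejo de la bicondicional que define la sentencia de Gödel $\varphi$; lo que efectivamente distingue (i) de (ii) es que para negar la demostrabilidad de $\varphi$ basta la consistencia simple, mientras que para negar la de $\neg\varphi$ hace falta la condición más fuerte de $\omega$-consistencia, pues necesitamos bloquear la posibilidad de que $T$ ``vea'' una demostración existencial sin tener ningún testigo estándar.
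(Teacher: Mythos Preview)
Tu demostración es correcta y es precisamente el argumento clásico de Gödel: usas la representabilidad de $\mathrm{Prov}_T$ para pasar del metanivel al nivel objeto en (i), y la $\omega$-consistencia como el ``supuesto adicional'' para bloquear el testigo no estándar en (ii). No hay nada que objetar.

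Respecto a la comparación: el artículo enuncia este teorema sin demostración (no hay entorno \texttt{proof} ni marcador $\square$ tras el enunciado; el Segundo Teorema que le sigue también se deja sin prueba). Por tanto no hay un argumento del artículo contra el cual contrastar el tuyo; lo que has escrito es exactamente lo que cabría esperar dada la maquinaria introducida en la sección previa ($\mathrm{Prov}_T$ primitiva recursiva, $\Sigma_1$-completitud, y la definición de $\omega$-consistencia en la Observación que precede al enunciado). Tu identificación explícita del ``supuesto adicional'' con la $\omega$-consistencia, y tu señalamiento de que el punto delicado es el cruce metanivel/nivel objeto vía representabilidad, son aciertos expositivos que el artículo deja implícitos.
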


\begin{theorem}[Segundo Teorema de Incompletitud de Gödel] Sea $\mathrm{Con}_{T}$ igual a $\neg\mathrm{Pr}_{T}(\ulcorner\Lambda\urcorner)$, donde $\Lambda$ es cualquier afirmación contradictoria conveniente. Entonces, $T\nvdash\mathrm{Con}_{T}$.   \hfill $\square$
\end{theorem}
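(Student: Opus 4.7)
El plan es obtener el resultado como una consecuencia inmediata del Lema de Löb, ya enunciado en esta sección, y así evitar rehacer la formalización completa del Primer Teorema dentro de $T$. La idea central es traducir la aserción $\mathrm{Con}_T$ a la forma $\mathrm{Pr}_T(\ulcorner\Lambda\urcorner)\to\Lambda$, aplicarle Löb, y observar que lo que queda es exactamente la consistencia de $T$.

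El primer paso es notar que $\mathrm{Pr}_T(\ulcorner\Lambda\urcorner)\to\Lambda$ es, dentro de $T$, lógicamente equivalente a $\mathrm{Con}_T$. En efecto, como $\Lambda$ es una contradicción \emph{conveniente}, digamos $\underline{0}=\underline{1}$, que PA refuta, se tiene $T\vdash\neg\Lambda$ y, por lógica clásica, $T\vdash(\mathrm{Pr}_T(\ulcorner\Lambda\urcorner)\to\Lambda)\leftrightarrow\neg\mathrm{Pr}_T(\ulcorner\Lambda\urcorner)$; pero lo último es, por definición, $\mathrm{Con}_T$. En particular, $T\vdash\mathrm{Con}_T$ si, y sólo si, $T\vdash\mathrm{Pr}_T(\ulcorner\Lambda\urcorner)\to\Lambda$.

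Aplicaría a continuación el Lema de Löb, que afirma $T\vdash\mathrm{Pr}_T(\ulcorner\psi\urcorner)\to\psi$ si, y sólo si, $T\vdash\psi$, a la sentencia $\psi=\Lambda$. Combinando con la equivalencia anterior, se obtiene la cadena
\[
T\vdash\mathrm{Con}_T\;\Longleftrightarrow\;T\vdash\mathrm{Pr}_T(\ulcorner\Lambda\urcorner)\to\Lambda\;\Longleftrightarrow\;T\vdash\Lambda,
\]
es decir, $T$ demuestra su propia consistencia si, y sólo si, es inconsistente. Bajo la hipótesis (implícita) de que $T$ es consistente, concluimos $T\nvdash\mathrm{Con}_T$, como se quería.

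El trabajo pesado del teorema queda, por este camino, absorbido por el Lema de Löb. Anticipo que el obstáculo mayor en la prueba de dicho Lema (admitido como hecho en este esquema) es la condición de auto-verificabilidad $T\vdash\mathrm{Pr}_T(\ulcorner\psi\urcorner)\to\mathrm{Pr}_T(\ulcorner\mathrm{Pr}_T(\ulcorner\psi\urcorner)\urcorner)$, la cual exige formalizar dentro de $T$ la definición recursiva primitiva de $\mathrm{Pr}_T$ presentada en esta subsección y probar la $\Sigma_1$-completitud interna por inducción sobre la complejidad de las fórmulas. Concedido Löb, el argumento central descrito arriba es puramente proposicional.
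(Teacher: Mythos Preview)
El artículo no ofrece demostración propia de este teorema: el $\square$ tras el enunciado indica que se da por citado sin prueba. Por tanto no hay comparación de estrategias que hacer; solo cabe evaluar tu argumento en sí mismo.

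Tu deducción es correcta y es, de hecho, la ruta estándar para obtener el Segundo Teorema a partir del Lema de Löb. El paso clave, que $T\vdash(\mathrm{Pr}_T(\ulcorner\Lambda\urcorner)\to\Lambda)\leftrightarrow\neg\mathrm{Pr}_T(\ulcorner\Lambda\urcorner)$ porque $T\vdash\neg\Lambda$, es lógica proposicional pura, y la aplicación de Löb con $\psi=\Lambda$ cierra el argumento limpiamente. Además encaja bien con la exposición del artículo, que enuncia el Lema de Löb justo antes de los principios de reflexión; apoyarte en él es coherente con el nivel de detalle que el texto mantiene. Tu observación de que la hipótesis de consistencia de $T$ es implícita también es pertinente: sin ella el enunciado es trivialmente falso. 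El único comentario menor es que tu último párrafo, sobre la condición de derivabilidad $\mathrm{Pr}_T(\ulcorner\psi\urcorner)\to\mathrm{Pr}_T(\ulcorner\mathrm{Pr}_T(\ulcorner\psi\urcorner)\urcorner)$, es informativo pero estrictamente innecesario dado que el artículo ya admite Löb como hecho.
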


\noindent ¿Será entonces que Russell tenía razón? ¿Será que efectivamente las matemáticas pueden ser definidas como aquel tema del cual no sabemos nunca lo que decimos ni si lo que decimos es verdadero? Si bien no es un `nunca', es `muchas veces', la expresión que debemos usar. Debemos concluir, sin embargo que Hilbert estaba equivocado. En matemáticas sí hay ignorabimus. Gödel, quién estuvo más cerca que nadie de llevar a feliz término el programa formalista, fue quien ¡oh sorpresa! notó  que éste era irrealizable, a lo que muchos matemáticos respondieron con rechazo, como si desconocer la prueba evitaría la veracidad del teorema.

\section{Modelos no-estándar de la Aritmética}

\subsection{Existencia de Modelos no-estándar}\hspace{0.1cm}\\ Tenemos dos descripciones dadas para $\mathbb{N}$, a saber, PA y $\mathrm{Th}(\mathbb{N})$, querríamos saber si estas descripciones son univocas, es decir, si todo modelo de PA o de $\mathrm{Th}(\mathbb{N})$ son isomorfos a $\mathbb{N}$, la respuesta es No. Vamos a demostrar la existencia de modelos de $\mathrm{Th}(\mathbb{N})$ (y por lo tanto de PA)  muy ``parecidos'' a $\mathbb{N}$ pero no isomorfos a él. Tales modelos se conocen como \textbf{modelos no-estándar}.
\medskip

\noindent  La prueba de la existencia de dichos modelos se debe a Thoralf Skolem en el año 1934 en el trabajo titulado  {\it Über die Nicht-charakterisierbarkeit der Zahlenreihe mittels endlich oder abzählbar unendlich vieler Aussagen mit ausschließlich Zahlenvariablen}. Fundamenta Mathematicae, Alemania,  Vol. 23, No. 1, 150—161, 1934.

\begin{definition}\rm Considere la $\mathcal{L}_{A}$-teoría $\mathrm{Th}(\mathbb{N})$. Sea $n\in\mathbb{N}$. Definimos el $\mathcal{L}_{A}$-término $\underline{n}$ como 

\[
\underbrace{(...(((1+1)+1)+1)...+1)}_{n-\mathrm{veces}\hspace{0.03cm}1}
\]

\noindent y cero es simplemente el símbolo constante 0. Sea $c$ un nuevo símbolo constante. Consideremos $\mathcal{L}_{c}$ el lenguaje obtenido al unir a $\mathcal{L}_{A}$ el símbolo constante $c$. Sea $T$  la $\mathcal{L}_{c}$-teoría engendrada en el seno de los axiomas: 

\begin{center}
$\sigma$ (para cada $\sigma\in\mathrm{Th}(\mathbb{N})$)
\end{center}

\noindent y $c>\underline{n}$ (para cada $n\in\mathbb{N}$).
\end{definition}

\begin{remark}\rm  Por el Teorema de Compacidad, demostrar que $T$ es satisfacible es equivalente a demostrar que es finitamente satisfacible. Sea $\Delta_{0}\subset T$ finito. Entonces, existe $k\in\mathbb{N}$ tal que $\Delta_{0}\subset T_{k}\subset T$, con 

\[
T_{k}=\mathrm{Th}(\mathbb{N})\cup\{c>\underline{n}:n>k\}
\]

\noindent y evidentemente $(\mathbb{N}, k)\models T_{k}$. Luego, $T$ es finitamente satisfacible, y así, satisfacible.   Sea $\mathcal{M}_{c}\models T$. Ahora, como 

\[
\mathcal{M}_{c}\models c>\underline{n}
\]

\noindent para todo $n\in\mathbb{N}$, entonces, existe un entero ``infinito". Ahora, claramente $\mathcal{M}_{c}\models\mathrm{Th}(\mathbb{N}), PA$ en virtud de que $PA\subset\mathrm{Th}(\mathbb{N})\subset T$. 
\bigskip

\noindent Ahora vamos a mirar algunas propiedades interesantes de estos modelos no-estándar cuya existencia acabamos de demostrar. 
\end{remark}

\begin{proposition} El reducto (restricción) $\mathcal{M}$ de $\mathcal{M}_{c}$ al leguaje $\mathcal{L}_{A}$ no es isomorfo a $\mathbb{N}$. 
\end{proposition}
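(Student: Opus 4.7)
The plan is to proceed by contradiction: suppose $\phi:\mathcal{M}\to\mathbb{N}$ is an isomorphism of $\mathcal{L}_{A}$-structures, and derive a contradiction from the fact that the interpretation $c^{\mathcal{M}_{c}}$ (which of course still lives in the underlying set $M$ of the reduct $\mathcal{M}$) dominates every $\underline{n}^{\mathcal{M}}$.

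The first step is to observe that any $\mathcal{L}_{A}$-isomorphism must be compatible with the interpretation of all closed terms of $\mathcal{L}_{A}$. Since $\phi$ preserves the constants $\underline{0},\underline{1}$ and the operation $+$, an easy induction on $n$ shows $\phi(\underline{n}^{\mathcal{M}})=\underline{n}^{\mathbb{N}}=n$ for every standard $n\in\omega$. The second step uses the order: an $\mathcal{L}_{A}$-isomorphism preserves $<$, hence from $\mathcal{M}_{c}\models c>\underline{n}$ (which passes to the reduct since $<$ is a symbol of $\mathcal{L}_{A}$) we get
\[
\phi(c^{\mathcal{M}_{c}})>\phi(\underline{n}^{\mathcal{M}})=n \quad \text{in } \mathbb{N},
\]
for every $n\in\omega$. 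Setting $m:=\phi(c^{\mathcal{M}_{c}})\in\omega$, this gives an element of $\omega$ that is strictly greater than every natural number, which is impossible.

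An alternative, essentially equivalent way to package the argument is to notice that the element $c^{\mathcal{M}_{c}}\in M$ is not the interpretation of any closed term $\underline{n}$; that is, $\mathcal{M}$ has a ``non-standard'' element, whereas every element of $\omega$ is of the form $\underline{n}^{\mathbb{N}}$. Any $\mathcal{L}_{A}$-isomorphism would have to match the set $\{\underline{n}^{\mathcal{M}}:n\in\omega\}$ bijectively with all of $\omega$, leaving no room in the codomain for the image of $c^{\mathcal{M}_{c}}$.

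There is no real obstacle in the proof; the only point that requires a line of care is the passage from ``$\mathcal{M}_{c}\models c>\underline{n}$'' to a statement about the reduct $\mathcal{M}$, which is immediate because the symbol $<$ belongs to $\mathcal{L}_{A}\subset\mathcal{L}_{c}$ and the element $c^{\mathcal{M}_{c}}$ remains in $M$ after forgetting the constant symbol $c$.
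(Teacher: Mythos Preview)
Your proof is correct and follows essentially the same idea as the paper's: both argue that an $\mathcal{L}_{A}$-isomorphism must match $n$ with $\underline{n}^{\mathcal{M}}$, leaving the element $c^{\mathcal{M}_{c}}$ unmatched. The only cosmetic difference is direction: the paper takes $h:\mathbb{N}\to\mathcal{M}$ and observes that $c^{\mathcal{M}_{c}}$ (being distinct from every $\underline{n}^{\mathcal{M}}$ since $>$ implies $\neq$) lies outside the image of $h$, i.e.\ $h$ is not surjective; your ``alternative packaging'' is exactly this, while your main version pushes $c^{\mathcal{M}_{c}}$ forward along $\phi:\mathcal{M}\to\mathbb{N}$ to obtain an element of $\omega$ above every natural.
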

 
\begin{proof}
Supóngase que $h:\mathbb{N}\to M$ es un isomorfismo. Necesariamente, $h$ envía a $n\in\mathbb{N}$ en $\underline{n}^{\mathcal{M}}$ el elemento que realiza el término cerrado $\underline{n}$ en $\mathcal{M}$. Además, 

\[
\mathcal{M}\models \forall x\forall y(x>y\to\neg (x=y)),
\]

\noindent puesto que está sentencia es cierta en $\mathbb{N}$, luego está en $\mathrm{Th}(\mathbb{N})$. De aquí se sigue que el elemento que realiza a $c$ en $\mathcal{M}_{c}$ no es la imagen de $h$.
\end{proof}
 
\noindent Por el Teorema de Löweheim-Skolem-Tarski, podemos tomar estos modelos no-estándar con un cardinal infinito tan grande como queramos. Lo que nos dice que hay ``muchos" modelos no isomorfos de $\mathrm{Th}(\mathbb{N})$. 
\bigskip

\begin{proposition}  El mapeo $h:\mathbb{N}\to\mathcal{M}$ que envía a $n\in\mathbb{N}$ en $\underline{n}^{\mathcal{M}}$ es una $\mathcal{L}_{A}$-inmersión de estructuras.
\end{proposition}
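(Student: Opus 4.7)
El plan es verificar, símbolo por símbolo de $\mathcal{L}_{A}$, que $h$ preserva las interpretaciones, y además que es inyectiva. La herramienta central del argumento es que $\mathcal{M}\models\mathrm{Th}(\mathbb{N})$: en efecto, $\mathrm{Th}(\mathbb{N})\subset T$, $\mathcal{M}_{c}\models T$, y el reducto $\mathcal{M}$ no cambia la interpretación de los símbolos de $\mathcal{L}_{A}$. En consecuencia, para toda $\mathcal{L}_{A}$-sentencia $\sigma$ se tiene la implicación $\mathbb{N}\models\sigma\Rightarrow\mathcal{M}\models\sigma$, y de esta observación todo se sigue casi sin trabajo.

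Primero trato las constantes: por la definición del término $\underline{n}$, se tiene $\underline{0}=0$ y $\underline{1}=1$, luego $h(0)=0_{\mathcal{M}}$ y $h(1)=1_{\mathcal{M}}$. Para las operaciones, fijo $n,m\in\mathbb{N}$ y considero las sentencias cerradas
\[
\underline{n}+\underline{m}=\underline{n+m}\qquad\text{y}\qquad \underline{n}\cdot\underline{m}=\underline{nm},
\]
que son verdaderas en $\mathbb{N}$ (verificación finita para cada par $(n,m)$, o bien inducción sobre $m$ usando $\underline{m+1}=\underline{m}+1$). Como pertenecen a $\mathrm{Th}(\mathbb{N})$, valen en $\mathcal{M}$, y esto entrega $h(n+m)=h(n)+_{\mathcal{M}}h(m)$ y $h(nm)=h(n)\cdot_{\mathcal{M}}h(m)$. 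Para el orden, si $n<m$ en $\mathbb{N}$, entonces $\mathbb{N}\models\underline{n}<\underline{m}$, luego la misma sentencia vale en $\mathcal{M}$, dando $h(n)<_{\mathcal{M}}h(m)$.

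Finalmente, la inyectividad se obtiene como en la proposición anterior: si $n\neq m$, sin pérdida $n<m$, y el punto previo muestra $h(n)<_{\mathcal{M}}h(m)$; pero la sentencia $\forall x\forall y(x<y\to\neg(x=y))$ es cierta en $\mathbb{N}$, luego en $\mathcal{M}$, de modo que $h(n)\neq h(m)$. Así $h$ es un homomorfismo inyectivo, es decir, una $\mathcal{L}_{A}$-inmersión.

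El único punto potencialmente delicado es mantener con claridad la distinción entre el término sintáctico $\underline{n}$ (una cadena de símbolos $(\ldots((1+1)+1)\ldots+1)$) y su realización $\underline{n}^{\mathcal{M}}$ en una estructura dada, y justificar que para cada par concreto $(n,m)$ la igualdad $\underline{n}+\underline{m}=\underline{n+m}$ es efectivamente un elemento de $\mathrm{Th}(\mathbb{N})$. Una vez asentado esto, el resto es transferencia automática de sentencias de $\mathbb{N}$ a $\mathcal{M}$ mediante la inclusión $\mathrm{Th}(\mathbb{N})\subset T$, sin necesidad de inducción externa ni apelaciones a resultados más fuertes como el Test de Tarski-Vaught.
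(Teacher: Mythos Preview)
Tu demostración es correcta y sigue esencialmente la misma línea que la del artículo: ambas transfieren $\mathcal{L}_{A}$-sentencias cerradas sobre los numerales desde $\mathbb{N}$ a $\mathcal{M}$ vía $\mathcal{M}\models\mathrm{Th}(\mathbb{N})$. Las únicas diferencias son cosméticas: el artículo deduce la inyectividad directamente de la sentencia $\neg(\underline{n}=\underline{k})$ mientras tú la obtienes a través del orden e irreflexividad, y tú tratas las constantes explícitamente mientras el artículo las omite; ninguna de estas variaciones altera el argumento de fondo.
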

 
\begin{proof}
 Para ver que $h$ es uno-uno, notése que si $n, k\in\mathbb{N}$ con $k\neq n$, entonces $ \mathbb{N}\models\neg(\underline{n}=\underline{k})$ luego la sentencia $\neg(\underline{n}=\underline{k})$ está en $\mathrm{Th}(\mathbb{N})$ por tanto  es cierta en $\mathcal{M}$. Similarmente $h$ preserva $<, +$ y $\cdot$, ya que para cualquier $k, n, m\in\mathbb{N}$, 
 
 \[
 n<m\Leftrightarrow\mathbb{N}\models \underline{n}<\underline{m}\Leftrightarrow \mathcal{M}\models\underline{n}<\underline{m},
 \]
 
 \[
 n+m=k\Leftrightarrow\mathbb{N}\models\underline{n}+\underline{m}=\underline{k}\Leftrightarrow \mathcal{M}\models \underline{n}+\underline{m}=\underline{k},
 \]
 
\noindent  y 
 
 \[
 n\cdot m=k\Leftrightarrow\mathbb{N}\models\underline{n}\cdot\underline{m}=\underline{k}\Leftrightarrow \mathcal{M}\models\underline{n}\cdot\underline{m}=\underline{k}.
 \]
\end{proof}

\begin{definition}\rm
Considere el lenguaje $\mathcal{L}_{A}$ de la aritmética y $PA$ los axiomas de Peano. Suponga que $\mathcal{M}, \mathcal{N}\models PA$. Decimos que $\mathcal{N}$ es una \textbf{extensión cofinal} de $\mathcal{M}$ si $M\subset N$ y $a<b$ para todo $a\in M$ y $b\in N-M$.
\end{definition}

\begin{remark}\rm Podemos siempre identificar a $\mathbb{N}$ como la imagen de $h$ en $\mathcal{M}$. Así, $\mathbb{N}$ es una subestructura de todo $\mathcal{M}\models\mathrm{Th}(\mathbb{N})$. De donde $\mathcal{M}$ es no-estándar si, y sólo si,  existe $a\in M$ tal que $a$ no es ningún $n\in\mathbb{N}$ estándar. A tales $a$'s se les conoce como \textbf{enteros no-estándar}.  El orden $<$ es un orden lineal sobre $M$ con menor elemento 0 y sin mayor elemento (esto se puede expresar en una sentencia de primer orden $\gamma$ que vale para $\mathbb{N}$, luego está en $\mathrm{Th}(\mathbb{N})$ de donde vale en $\mathcal{M}$). Ahora bien, sea $k\in\mathbb{N}$, tenemos que   

\[
\mathbb{N}\models\forall x\left(x<k\to\left(\bigvee_{i=0}^{k-1} x=\underline{i}\right)\right),
\]

\noindent luego la sentencia $\forall x\displaystyle\left(x<k\to\left(\bigvee_{i=0}^{k-1}x=\underline{i}\right)\right)$ está en $\mathrm{Th}(\mathbb{N})$ y así

\[
\mathcal{M}\models\forall x\left( x<k\to\left(\bigvee_{i=0}^{k-1} x=\underline{i}\right)\right).
\]

\noindent Así, ningún entero no-estándar vive por debajo de algún entero estándar. De donde, se obtiene que $\mathbb{N}$ es un \emph{segmento inicial} de $\mathcal{M}$ y este último una \emph{extensión cofinal} de $\mathcal{M}$. 
\end{remark}

\begin{proposition}
Sea $\theta(x)$ una $\mathcal{L}_{A}$-fórmula con una única variable libre  $x$. Sea $\mathcal{M}\models\mathrm{Th}(\mathbb{N})$ no-estándar. Entonces, existe un $a\in M$ no-estándar tal que $\mathcal{M}\models\theta(a)$ si, y sólo si, hay infinitos enteros estándar $k\in \mathbb{N}$ que cumplen $\mathbb{N}\models\theta(\underline{k})$.
\end{proposition}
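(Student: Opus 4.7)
La estrategia consiste en aprovechar que $\mathcal{M}\equiv\mathbb{N}$ (pues $\mathcal{M}\models\mathrm{Th}(\mathbb{N})$) para trasladar la información cuantitativa sobre $\theta$ en $\mathbb{N}$ a una sentencia cerrada, y luego usar que los enteros no-estándar son mayores que todos los estándar, tal como se hizo en la observación anterior. Probaré las dos direcciones por separado.

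Para la implicación ($\Leftarrow$), supongamos que hay infinitos $k\in\mathbb{N}$ con $\mathbb{N}\models\theta(\underline{k})$. Entonces el conjunto de enteros estándar que satisfacen $\theta$ no es acotado, de donde
\[
\mathbb{N}\models\forall y\hspace{0.1cm}\exists x\hspace{0.1cm}(y<x\wedge\theta(x)).
\]
Esta es una $\mathcal{L}_{A}$-sentencia, por lo que pertenece a $\mathrm{Th}(\mathbb{N})$ y en consecuencia vale también en $\mathcal{M}$. Ahora elegimos cualquier entero no-estándar $b\in M$, que existe por hipótesis. Aplicando la sentencia a $y=b$, obtenemos $a\in M$ con $a>b$ y $\mathcal{M}\models\theta(a)$; como $a>b$ y $b$ es mayor que todo entero estándar, $a$ también lo es, luego $a$ es no-estándar.

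Para la implicación ($\Rightarrow$) procederé por contrarrecíproco. Supongamos que sólo finitamente muchos $k\in\mathbb{N}$ cumplen $\mathbb{N}\models\theta(\underline{k})$; sea $K\in\mathbb{N}$ una cota superior para dichos $k$ (por ejemplo, el máximo, o $0$ si no hay ninguno). Entonces
\[
\mathbb{N}\models\forall x\hspace{0.1cm}(\underline{K}<x\to\neg\theta(x)),
\]
lo cual es una $\mathcal{L}_{A}$-sentencia (los coeficientes se expanden como sumas repetidas de $1$). Por elementariedad vale en $\mathcal{M}$. Pero, según la observación previa a esta proposición, ningún entero no-estándar de $\mathcal{M}$ puede ser $\leq\underline{K}$; así que todo $a\in M$ no-estándar satisface $\mathcal{M}\models\underline{K}<a$ y, por la sentencia anterior, $\mathcal{M}\models\neg\theta(a)$. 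Esto muestra que no existe un no-estándar $a\in M$ con $\mathcal{M}\models\theta(a)$, como se quería.

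No preveo obstáculo técnico real: el argumento es esencialmente una aplicación directa de la equivalencia elemental $\mathcal{M}\equiv\mathbb{N}$ combinada con el hecho, ya establecido, de que $\mathbb{N}$ es segmento inicial de $\mathcal{M}$. El único punto a cuidar es expresar las cotas ``finitas'' en el lenguaje $\mathcal{L}_{A}$ mediante los términos $\underline{n}$, pero esto ya está codificado en la definición de $\underline{n}$ al inicio de la sección.
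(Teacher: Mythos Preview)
Your proof is correct and follows essentially the same approach as the paper: both directions rest on transferring an appropriate $\mathcal{L}_{A}$-sentence between $\mathbb{N}$ and $\mathcal{M}$ via $\mathrm{Th}(\mathbb{N})$, together with the fact that non-standard elements exceed every $\underline{n}$. The only cosmetic difference is that for ($\Rightarrow$) you argue by contrapositive using a single bound $\underline{K}$, whereas the paper argues directly by transferring $\exists x(\theta(x)\wedge x>\underline{n})$ for each $n$; these are trivially interchangeable.
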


\begin{proof}
 Supóngase que para algún $a\in M$  se tiene que $\mathcal{M}\models\theta(a)$. Entonces, $\mathcal{M}\models\exists x\theta(x)$ luego así es para $\mathbb{N}$ ya que si $\mathbb{N}\models\neg\exists x\theta(x)$ entonces, $\neg\exists x\theta(x)\in\mathrm{Th}(\mathbb{N})$ contradiciendo el supuesto. Como $\mathcal{M}\models\mathrm{Th}(\mathbb{N})$, entonces, $M\models a>\underline{n}$ ya que $a$ es no-estándar. Así, 

\[
\mathcal{M}\models\exists x(\theta(x)\wedge x>\underline{n}).
\]

\noindent Así, $\mathbb{N}\models\exists x(\theta(x)\wedge x>\underline{n})$. Se sigue que hay infinitos $k$ en $\mathbb{N}$ que cumplen $\mathbb{N}\models\theta(\underline{k})$. Reciprocamente, suponga que hay infinitos $k\in\mathbb{N}$ que cumplen $\mathbb{N}\models\theta(\underline{k})$ y suponga que $\mathcal{M}\models\mathrm{Th}(\mathbb{N})$ es no-estándar. Entonces, ya que $\mathbb{N}\models\forall x\exists y(y>x\wedge\theta(y))$, tenemos $\mathcal{M}\models\forall x\exists y(y>x\wedge(y))$. Así, para cualquier $b\in M$, y en particular $b\in M$ no-estándar, hay $a>b$ en $M$ tal que $\mathcal{M}\models\theta(a)$.
\end{proof}

\begin{theorem}
Existen  $2^{\aleph_{0}}$ modelos no isomorfos de $\mathrm{Th}(\mathbb{N})$. 
\end{theorem}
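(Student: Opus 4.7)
The plan is to use a type-counting argument. For each subset $S$ of the set $P$ of standard prime numbers, I define the partial 1-type $p_S(x) := \{\underline{p}\mid x : p\in S\}\cup\{\neg(\underline{p}\mid x):p\notin S\}$ over $\mathrm{Th}(\mathbb{N})$ in the language $\mathcal{L}_A$. Any finite fragment of $p_S$ is already realized inside $\mathbb{N}$: given finitely many primes that must divide $x$ and finitely many that must not, take $x$ to be the product of the former. Hence, adjoining a new constant symbol $c$, the theory $\mathrm{Th}(\mathbb{N})\cup p_S(c)$ is finitely satisfiable, and by Compactness (Theorem \ref{compacidad_theorem}) together with Löwenheim-Skolem-Tarski (Theorem \ref{loweheim-skolem-tarski_theorem}) there is a \emph{countable} model $\mathcal{M}_S\models\mathrm{Th}(\mathbb{N})$ containing an element $a_S$ that realizes $p_S$.

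For $S\neq T$, choosing any prime $p$ in the symmetric difference, the single formula $\underline{p}\mid x$ already separates $p_S$ from $p_T$. Hence $\{p_S:S\subseteq P\}$ is a family of $2^{\aleph_0}$ pairwise incompatible 1-types: no element in any model can realize two of them simultaneously.

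Now comes the pigeonhole. A countable structure realizes at most $\aleph_0$ complete 1-types (each element realizes exactly one). Assume, for contradiction, that $\mathrm{Th}(\mathbb{N})$ has only $\kappa<2^{\aleph_0}$ non-isomorphic countable models. Since isomorphic models realize the same 1-types, the total number of 1-types realized in \emph{some} countable model of $\mathrm{Th}(\mathbb{N})$ is at most $\kappa\cdot\aleph_0<2^{\aleph_0}$. But by the previous paragraph each $p_S$ is realized in the countable model $\mathcal{M}_S$, yielding $2^{\aleph_0}$ realized types — a contradiction. Consequently there are at least $2^{\aleph_0}$ non-isomorphic countable models of $\mathrm{Th}(\mathbb{N})$, and trivially at most $2^{\aleph_0}$ since any countable structure in a countable language is coded by countably much data, giving the stated equality.

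The main obstacle is manufacturing simultaneously $2^{\aleph_0}$ consistent and mutually separating 1-types; once this is done, the counting step is automatic. The divisibility trick used above dispatches both demands in one stroke, since divisibility by distinct standard primes is independent in $\mathbb{N}$ (so arbitrary subsets of $P$ give consistent types) and is preserved under isomorphism (so distinct subsets give non-equivalent types).
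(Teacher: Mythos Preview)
Your argument is correct and is the standard one. The paper itself gives no proof here---it simply refers the reader to Kaye's \emph{Models of Peano Arithmetic} \cite{kaye}---and the argument found there is precisely this type-counting construction using divisibility by standard primes, so your proof is essentially the intended one.
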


\begin{proof}
\rm Ver \cite{kaye}. 
\end{proof}

\subsection{$\mathbb{Z}$-cadenas} \hspace{0.1cm}\\

\noindent  Sea $\mathcal{M}\models\mathrm{Th}(\mathbb{N})$ no-estándar. Sea $c\in M$ no estándar. Entonces, los elementos $c-n$ y $c+n$ existen en $M$, para todo $n\in\mathbb{N}-\{0\}$. En efecto, la sentencia $\forall x(x\neq0\to\exists y(x=y+n))$ está en $\mathrm{Th}(\mathbb{N})$ para $n$ fijo en $\mathbb{N}-\{0\}$. Por lo tanto $c-n$ existe en $M$. De la misma forma $\forall x\exists y(x+n=y)\in\mathrm{Th}(\mathbb{N})$, para $n\in\mathbb{N}-\{0\}$ fijo; y así $c+n$ existe en $M$ para todo $n$ estándar diferente de 0. Así, motivamos la siguiente definición.

\begin{definition}\rm Sean $\mathcal{M}\models\mathrm{Th}(\mathbb{N})$ y $c\in M$ no estándar. El conjunto $\mathbb{Z}(c)=\{c\}\cup\{c-n, c+n:n\in\mathbb{N}-\{0\}$ se llama $\mathbb{Z}$-\textbf{cadena} asociada a $c$.
\end{definition}

\begin{remark}\rm 
Sean $\mathbb{Z}(e)$ y $\mathbb{Z}(c)$, $\mathbb{Z}$-cadenas. Se cumple una y sólo una se las siguientes afirmaciones: $\mathbb{Z}(d)=\mathbb{Z}(e)$ o $\mathbb{Z}(d)\cap\mathbb{Z}(e)=\emptyset$. En efecto, por el hecho de que $\mathbb{N}\models\forall x\forall y(x=y\vee x<y\vee x>y)$, tenemos que $\forall x\forall y(x=y\vee x<y\vee x>y)\in\mathrm{Th}(\mathbb{N})$. Ahora, si $d=e$, se tiene que $\mathbb{Z}(d)=\mathbb{Z}(e)$. Si $d\neq e$, supóngase (sin perdida de generalidad) que $d<e$, luego, si hay un número estándar $n$, de modo que $d+n=e$, en cuyo caso $\mathbb{Z}(d)=\mathbb{Z}(e)$, si no hay tal $n$, entonces $\mathbb{Z}(d)\cap\mathbb{Z}(e)=\emptyset$.
\end{remark}

\begin{notation}\rm  Notaremos $\mathbb{Z}(d)<\mathbb{Z}(e)$ para índicar que todo $x\in\mathbb{Z}(d)$ es menor que todo $x\in\mathbb{Z}(e)$. Así, es evidente que una (y sólo una) de las tres afirmaciones siguientes se cumple: $\mathbb{Z}(d)=\mathbb{Z}(e)$, $\mathbb{Z}(d)<\mathbb{Z}(e)$ y $\mathbb{Z}(e)<\mathbb{Z}(d)$. 
\end{notation}

\begin{lemma} Para toda $\mathbb{Z}$-cadena $\mathbb{Z}(d)$, existen  $\mathbb{Z}$-cadenas $\mathbb{Z}(d)$ y $\mathbb{Z}(c)$ de modo que $\mathbb{Z}(c)<\mathbb{Z}(d)<\mathbb{Z}(e)$.
\end{lemma}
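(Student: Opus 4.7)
El plan es exhibir explícitamente dos elementos no-estándar $c, e \in M$ con $\mathbb{Z}(c) < \mathbb{Z}(d) < \mathbb{Z}(e)$, aprovechando que los sumandos y restas por estándares de $d$ existen en $M$, como ya se registró antes de la definición de $\mathbb{Z}$-cadena.

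Para la cadena superior $\mathbb{Z}(e)$, primero tomaré $e := d + d \in M$. Como $d$ es no-estándar, se tiene $d > \underline{n}$ en $\mathcal{M}$ para todo $n \in \mathbb{N}$, y por tanto la diferencia $e - d = d$ excede a todo estándar; es decir, no existe $n \in \mathbb{N}$ con $d + \underline{n} = e$. Como además $d < e$, se concluye por la dicotomía establecida en la Observación anterior a la notación que $\mathbb{Z}(d) < \mathbb{Z}(e)$.

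Para la cadena inferior $\mathbb{Z}(c)$, aplicaré el Algoritmo de Euclides (Teorema \ref{algoritmo-euclides_theorem}) con dividendo $d$ y divisor $\underline{2}$, obteniendo $c, r \in M$ únicos con $d = 2c + r$ y $r < 2$. Debo verificar dos cosas: (i) que $c$ es no-estándar, y (ii) que $d - c$ excede todo entero estándar. Para (i), si $c$ estuviera en la imagen de la inmersión elemental $h:\mathbb{N} \hookrightarrow \mathcal{M}$ (Teorema \ref{modelo-primo_theorem}), como $h(\mathbb{N})$ es cerrada bajo $+$ y $\cdot$ y $r \in \{0, 1\} \subset h(\mathbb{N})$, tendríamos $d = 2c + r \in h(\mathbb{N})$, contradiciendo que $d$ es no-estándar. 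Para (ii), basta observar que $d - c = c + r \geq c$, luego $d - c$ es no-estándar, y en consecuencia $c + \underline{n} < d$ para todo $n \in \mathbb{N}$, lo cual da $\mathbb{Z}(c) < \mathbb{Z}(d)$.

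El obstáculo más delicado será justificar con limpieza la clausura de $h(\mathbb{N})$ bajo las operaciones del lenguaje, que es lo que permite descartar que $c$ sea estándar y garantizar que las diferencias $e - d$ y $d - c$ dominen a todo estándar; esto se apoya, en última instancia, en el carácter elemental de la inmersión $h$, ya establecido en el Teorema \ref{modelo-primo_theorem}.
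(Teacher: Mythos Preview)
Your proof is correct and essentially identical to the paper's: both take $e=d+d$ for the upper chain and $c\approx d/2$ for the lower one. The only cosmetic difference is that you extract $c$ via the Algoritmo de Euclides (Teorema~\ref{algoritmo-euclides_theorem}) with divisor $\underline{2}$, whereas the paper directly invokes the sentence $\forall x\,(x\neq0\wedge x\neq1\to\exists y\,(y<x\wedge(y+y=x\vee y+y=x+1)))\in\mathrm{Th}(\mathbb{N})$ to produce such a $c$; the resulting element and the remaining verifications are the same.
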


\begin{proof}
 Basta considerar $e=d+d$. En efecto, $\forall x\forall y\forall z(x+y=x+z\to y=z)\in\mathrm{Th}(\mathbb{N})$. Si $d+d=d+n$, para $n\in\mathbb{N}$, nos da $d=n$ y esto no es posible puesto que $d$ no es estándar y $n$ sí. Luego, $d+d\not\in\mathbb{Z}(d)$. De aquí el argumento, pues $d+d>d$. Así, hemos conseguido una $\mathbb{Z}$-cadena $\mathbb{Z}(e)$ tal que $\mathbb{Z}(d)<\mathbb{Z}(e)$.
\medskip

\noindent La sentencia $\forall x(x\neq0\wedge x\neq 1\to\exists y(y<x\wedge(y+y=x\vee y+y=x+1)))$ vale en $\mathbb{N}$, luego está en $\mathrm{Th}(\mathbb{N})$. Luego, hay un $c$ tal que o bien $c+c=d$ o bien $c+c=d+1$, en todo caso $\mathbb{Z}(c)\neq\mathbb{Z}(d)$ y $\mathbb{Z}(d)<\mathbb{Z}(c)$ es imposible, de modo que $\mathbb{Z}(c)<\mathbb{Z}(d)$.
\medskip

\noindent En conclusión, hemos obtenido $\mathbb{Z}$-cadenas $\mathbb{Z}(e)$ y  $\mathbb{Z}(c)$ de modo que  $\mathbb{Z}(c)<\mathbb{Z}(d)<\mathbb{Z}(e)$.
\end{proof}

\begin{lemma}
Sean $\mathbb{Z}(d)$ y $\mathbb{Z}(e)$, $\mathbb{Z}$-cadenas diferentes, tales que $\mathbb{Z}(d)<\mathbb{Z}(e)$. Existe una $\mathbb{Z}$-cadena $\mathbb{Z}(f)$ tal que que $\mathbb{Z}(d)<\mathbb{Z}(f)<\mathbb{Z}(e)$.
\end{lemma}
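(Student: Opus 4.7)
The plan is to produce $f$ as an approximate midpoint between $d$ and $e$, mimicking the construction from the previous lemma. Concretely, I would apply the sentence $\forall x\,\exists y\,(y+y=x \vee y+y+1=x)$, which is true in $\mathbb{N}$ and hence belongs to $\mathrm{Th}(\mathbb{N})$, to the element $d+e \in M$, obtaining $f \in M$ with either $f+f = d+e$ or $f+f+1 = d+e$.

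The first substantive step is to verify $d < f < e$. Using only elementary order sentences such as $\forall x\forall y(x<y \to x+x < y+y)$, which belong to $\mathrm{Th}(\mathbb{N})$ and therefore hold in $\mathcal{M}$, if $f \le d$ then $f+f+1 \le d+d+1 \le d+e$, with equality forcing $e = d+1$; but this would yield $\mathbb{Z}(d) = \mathbb{Z}(e)$, contrary to hypothesis. A symmetric argument using $d < e$ rules out $f \ge e$.

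The second step is to show $\mathbb{Z}(f)$ is distinct from both $\mathbb{Z}(d)$ and $\mathbb{Z}(e)$. Suppose $\mathbb{Z}(f) = \mathbb{Z}(d)$; since $f > d$, there is a \emph{standard} $n \in \mathbb{N}$ with $n \ge 1$ and $f = d+n$. Substituting into the defining equation gives $e = d + 2n$ or $e = d + 2n + 1$, so $e$ would lie at standard distance from $d$, forcing $\mathbb{Z}(d) = \mathbb{Z}(e)$, a contradiction. The case $\mathbb{Z}(f) = \mathbb{Z}(e)$ is handled symmetrically: writing $e = f + m$ with standard $m \ge 1$ and substituting yields $f = d + m$ (if $f+f = d+e$) or $f = d + m - 1$ (if $f+f+1 = d+e$), again a standard distance between $d$ and $e$.

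Combining these two steps with the trichotomy noted just before the lemma statement, the only remaining possibility is $\mathbb{Z}(d) < \mathbb{Z}(f) < \mathbb{Z}(e)$. No deep structural fact is required beyond the density of bisection in $\mathbb{N}$; the only administrative care needed is to carry the parity case split ($f+f = d+e$ versus $f+f+1 = d+e$) uniformly through each inequality, and to phrase every arithmetic fact as an $\mathcal{L}_A$-sentence true in $\mathbb{N}$ so that it transfers to $\mathcal{M}$ via $\mathrm{Th}(\mathbb{N})$.
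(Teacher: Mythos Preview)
Your proposal is correct and follows essentially the same approach as the paper: both take $f$ to be an approximate midpoint of $d+e$ via the sentence asserting every natural number is either twice something or one more than twice something, then argue that the resulting $\mathbb{Z}$-chain lies strictly between $\mathbb{Z}(d)$ and $\mathbb{Z}(e)$. Your version is considerably more detailed than the paper's, which simply states the bisection sentence and asserts the conclusion; you have supplied the verifications (that $d<f<e$ and that $\mathbb{Z}(f)$ is distinct from both endpoints) that the paper leaves implicit.
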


\begin{proof}
La sentencia $\forall x\forall y\exists z(z+z=x+y\vee z+z=x+y+1)$ vale en $\mathbb{N}$, así que está en $\mathrm{Th}(\mathbb{N})$. Sea $f+f=d+e$ si $d+e$ es par, y sea $f+f=d+e+1$ si $d+e$ es impar. En cualquier caso $\mathbb{Z}(c)<\mathbb{Z}(d)<\mathbb{Z}(e).$ 
\end{proof}

\noindent De los Lemas anterores se sigue que \emph{el conjunto de $\mathbb{Z}$-cadenas es un orden lineal denso}. 

\begin{theorem}
Supóngase que para $\mathcal{M}\models\mathrm{Th}(\mathbb{N})$ el conjunto de $\mathbb{Z}$-cadenas es numerable. Entonces, al olvidar la estructura interna de cada $\mathbb{Z}$-cadena, el conjunto de $\mathbb{Z}$-cadenas es isomorfo a $\mathbb{Q}$, el conjunto de los números racionales. 
\end{theorem}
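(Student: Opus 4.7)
El plan es aplicar el teorema clásico de Cantor: todo orden lineal numerable, denso y sin extremos es isomorfo a $(\mathbb{Q},<)$. Por tanto, bastará verificar que el conjunto $Z(\mathcal{M})$ de las $\mathbb{Z}$-cadenas de $\mathcal{M}$, ordenado por la relación $<$ introducida en la notación previa, cumple esas tres propiedades; luego se exhibirá el isomorfismo por el argumento usual de ida y vuelta.

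Primero, observaría que $(Z(\mathcal{M}),<)$ es un orden lineal: esta es exactamente la tricotomía comentada justo después de la definición de $\mathbb{Z}$-cadena (entre $\mathbb{Z}(d)=\mathbb{Z}(e)$, $\mathbb{Z}(d)<\mathbb{Z}(e)$ y $\mathbb{Z}(e)<\mathbb{Z}(d)$). Por hipótesis, $Z(\mathcal{M})$ es numerable; el primer lema de esta subsección muestra que no tiene ni mínimo ni máximo (dada $\mathbb{Z}(d)$ siempre se hallan $\mathbb{Z}(c)<\mathbb{Z}(d)<\mathbb{Z}(e)$), y el segundo garantiza la densidad (entre $\mathbb{Z}(d)<\mathbb{Z}(e)$ siempre hay $\mathbb{Z}(f)$). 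Así, $(Z(\mathcal{M}),<)$ satisface las hipótesis del teorema de Cantor.

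A continuación, construiría el isomorfismo $\Phi:Z(\mathcal{M})\to\mathbb{Q}$ por el método de ida y vuelta. Enumeraría $Z(\mathcal{M})=\{Z_{n}:n\in\mathbb{N}\}$ y $\mathbb{Q}=\{q_{n}:n\in\mathbb{N}\}$, y definiría por recursión una cadena ascendente $\Phi_{0}\subset\Phi_{1}\subset\cdots$ de biyecciones parciales finitas que preservan el orden. En el paso $(2n)$-ésimo se añade al dominio la primera $\mathbb{Z}$-cadena $Z_{k}$ aún no asignada, eligiendo como imagen un racional que cumpla las desigualdades impuestas por los valores ya fijados; en el paso $(2n+1)$-ésimo se procede simétricamente para asegurar que $q_{k}$ esté en el rango, eligiendo su preimagen en $Z(\mathcal{M})$ con las restricciones análogas. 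La unión $\Phi=\bigcup_{n\geq 0}\Phi_{n}$ será entonces el isomorfismo de órdenes buscado.

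El obstáculo principal, en cada paso, es garantizar la existencia del nuevo elemento requerido: dada una cantidad finita de parejas ya ubicadas, hallar un nuevo elemento que satisfaga las desigualdades de orden heredadas de las parejas anteriores. En $\mathbb{Q}$ esto es inmediato por densidad y ausencia de extremos; en $Z(\mathcal{M})$ es precisamente lo demostrado en los dos lemas precedentes, aplicados finitas veces a los vecinos inmediatos del segmento donde se desea encajar la nueva cadena. Salvado este punto, la alternancia asegura que $\Phi$ es sobreyectiva e inyectiva, la elección monótona asegura la preservación del orden, y el teorema queda probado.
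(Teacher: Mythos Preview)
Tu propuesta es correcta y coincide con el enfoque del artículo: tras observar, a partir de los dos lemas precedentes, que las $\mathbb{Z}$-cadenas forman un orden lineal denso sin extremos, el artículo enuncia el teorema sin prueba, apoyándose implícitamente en el teorema de Cantor sobre órdenes lineales densos numerables sin extremos. Tu desarrollo del argumento de ida y vuelta no hace sino explicitar ese paso clásico que el artículo da por sabido.
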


\begin{remark}\rm Si en la $\mathbb{Z}$-cadena $\mathbb{Z}(c)$ identificamos a $c$ con $0$, lo que obtenemos es una forma natural de ver a cada $\mathbb{Z}$-cadena como una copia isomorfa de $\mathbb{Z}$ en un modelo no estándar $\mathcal{M}$. Así, obtenemos la conslusión de que en un modelo no estándar dado $\mathcal{M}$, viven infinitas copias de los enteros, y más aún, es una cantidad densa de las mismas. 
\end{remark}

\section{El Principio combinatorio de Paris-Harrington, PH}

\subsection{Teoremas de Ramsey para particiones.}

\begin{definition}\rm
Sea $\sigma$ un cardinal, $[I]^{k}$ el conjunto de los subconjuntos de $I$ de cardinal $k$. Una función $P:[I]^{k}\to\sigma$ se llama una \textbf{partición} de $[I]^{k}$ en $\sigma$ partes.

\bigskip

\noindent Si $P:[I]^{k}\to\sigma$, llamamos $H\subset I$ \textbf{homogéneo} para $P$ si, y sólo si, $P$ es constante sobre $[H]^{n}$. Notaremos (siguiendo a Erdös) $\kappa\to(\lambda)_{\sigma}^{n}$ si siempre que $P:[\kappa]^{n}\to\sigma$, hay un $H\subset\kappa$ homogéneo para $P$ de cardinalidad $\lambda$.
\end{definition}

\begin{definition}\rm
Sea $H\subset\mathbb{N}$ finito. Se dice que $H$ es \textbf{relativamente grande} si $\mathrm{card}(H)\geq\min H$. Dados $n, r, k$ y $m$ números naturales, usaremos la notación 

\[
\xymatrix{
m\ar[r]_{*}&(k)_{r}^{n}\\
}
\]

\noindent para  indicar que para cualquier partición $P:[m]^{n}\to r$ hay un $H\subset m$ relativamente grande que es homogéneo para $P$ y de cardinalidad al menos $k$.
\end{definition}

\begin{definition}\rm
Un \textbf{árbol} es un conjunto parcialmente ordenado $(T, <_{T})$ tal que para todo $t\in T$, el conjunto $\widehat{t}:=\{s\in T:s<_{T}t\}$ está bien ordenado. Una \textbf{rama} de un árbol $T$ es una cadena (un subconjunto linealmente ordenado) máximal de $T$. Una \textbf{trayectoria } de $T$ es una cadena de $T$ que a su vez es un segmento inicial de $T$.  Un \textbf{árbol de ramaje finito} es un conjunto parcialmente ordenado $(T, <_{T})$ tal que:

\begin{itemize}
\item[i)] Existe $r\in T$ tal que $r<_{T} x$ para todo $x\in T$.
\item[ii)] Si $x\in T$, entonces $\{y:y<_{T}x\}$ es finito y linealmente ordenado por $<_{T}$.
\item[iii)] Si $x\in T$, cada conjunto finito (quizá vacío) $\{y_{1}, ..., y_{n}\}$ de elementos incomparables tal que cada $y_{i}>x$ y si $z>x$, entonces $z>y_{i}$ para algún $i$.
\end{itemize} 
\end{definition}

\noindent Por el Teorema de Enumeración, \emph{todo conjunto bien ordenado es isomorfo a algún único ordinal}. A este ordinal se le conoce como \textbf{tipo ordinal del conjunto}. La \textbf{altura} $\mathrm{Alt}(t)$ de $t$ en $T$ es el tipo de ordinal de $\widehat{t}$. El \textbf{nivel} $\alpha$ de $T$ es el conjunto $T_{\alpha}:=\{t\in T:\mathrm{Alt}(t)=\alpha\}$. La \textbf{ altura} de $T$ es $\min\{\alpha:T_{\alpha}=\emptyset\}$.

\begin{definition}\rm
Sea $\theta$ un ordinal y $\lambda$ un cardinal. Un árbol $T$ es un $(\theta, \lambda)$-\textbf{árbol} si:

\begin{itemize}
\item[i)] $(\forall\alpha<\theta)(T_{\alpha}\neq\emptyset)$.
\item[ii)] $T_{\theta}=\emptyset$.
\item[iii)] $(\forall\alpha<\theta)(\mathrm{card}(T_{\alpha})<\lambda)$.
\end{itemize}

\noindent Un $\aleph_{0}$-\textbf{árbol}, es simplemente un $(\aleph_{0}, \aleph_{0})$-árbol.
\end{definition}

\begin{lemma}[Lema de Köning] Todo $\aleph_{0}$-árbol tiene una rama cofinal, i.e., una rama que intercepta todos los niveles.
\end{lemma}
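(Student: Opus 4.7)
El plan es construir recursivamente una rama cofinal $b_0 <_{T} b_1 <_{T} b_2 <_{T} \cdots$ con $b_n \in T_n$ para todo $n\in\omega$, imponiendo en cada paso la condición inductiva adicional de que el sub\'arbol
\[
T_{\geq b_n}:=\{t\in T: b_n\leq_{T} t\}
\]
sea infinito. Esta condición es lo que permite iterar y es, en esencia, el único ingrediente no trivial: será la forma en que entra el principio del casillero (pigeonhole).

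Comienzo observando que $T$ mismo es infinito, pues como $(\forall \alpha<\aleph_{0})\,T_{\alpha}\neq\emptyset$ y los niveles son disjuntos por definición, $T$ contiene al menos un elemento en cada nivel finito. Tomo $b_0:=r$, la raíz; entonces $T_{\geq b_0}=T$ es infinito, cumpliendo la hipótesis inductiva. Para el paso recursivo, supongo $b_n\in T_n$ construido con $T_{\geq b_n}$ infinito. Sea $S(b_n)$ el conjunto de sucesores inmediatos de $b_n$; por la condición (iii) de árbol de ramaje finito, $S(b_n)$ es finito, y todo $z>_{T} b_n$ satisface $z\geq_{T} y$ para algún $y\in S(b_n)$. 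Entonces
\[
T_{\geq b_n}=\{b_n\}\cup\bigcup_{y\in S(b_n)}T_{\geq y},
\]
una unión finita. Como el lado izquierdo es infinito y $S(b_n)$ finito, por el principio del casillero existe $y\in S(b_n)$ con $T_{\geq y}$ infinito; defino $b_{n+1}:=y$. Como $y$ es sucesor inmediato de $b_n$ y $b_n\in T_n$, se tiene $b_{n+1}\in T_{n+1}$, preservando la propiedad de nivel.

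Queda verificar que $B:=\{b_n:n\in\omega\}$ es una rama (cadena máximal) cofinal. La cofinalidad es inmediata: $B$ intercepta $T_n$ para cada $n<\aleph_0$, y como $T_{\aleph_0}=\emptyset$, no hay más niveles que visitar. La linealidad de $B$ se sigue de que $b_n<_{T} b_{n+1}$ por construcción, combinado con la transitividad y con (ii). Para la maximalidad: si $t\in T$ fuese comparable con todo $b_n$, entonces $t\in T_m$ para algún $m<\aleph_{0}$; siendo comparable con $b_m\in T_m$ y coincidiendo su altura, debe ser $t=b_m\in B$. Así, $B$ es la rama cofinal buscada. El único paso con contenido real es el argumento de casillero en el paso inductivo, y no preveo mayor obstáculo técnico más allá de enunciarlo con cuidado.
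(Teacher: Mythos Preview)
Your argument is correct and is precisely the approach the paper takes: inductively choose $t_n\in T_n$ with the invariant that the subtree above $t_n$ is infinite, then pass to a successor via pigeonhole; the paper's proof is a two-line sketch of this same construction, and yours is simply the fleshed-out version. One small terminological slip: you invoke ``la ra\'iz $r$'' and condici\'on~(iii) de \emph{\'arbol de ramaje finito}, but the lemma is stated for $\aleph_0$-\'arboles, which a priori need not have a unique root nor satisfy that definition verbatim; however, since $T_0$ is finite and nonempty, the same pigeonhole step you use later also yields a suitable $b_0\in T_0$ with $T_{\geq b_0}$ infinite, so nothing is lost.
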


\begin{proof}
Sea $T$ un $\aleph_{0}$-árbol. Por inducción sobre $n<\omega$, elegimos $t_{n}\in T_{n}$ tal que $T^{t_{n}}$ es infinito y $t_{n}<_{T}t_{n+1}$. Entonces, $\{t_{n}:n<\omega\}$ es una rama cofinal de $T$.
\end{proof}

\begin{theorem}[Teorema Infinito de Ramsey] Para todo par de números naturales $n$ y $k$, se cumple que $\aleph_{0}\to(\aleph_{0})_{k}^{n}$. 
\end{theorem}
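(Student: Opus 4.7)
Procederemos por inducción sobre $n$, manteniendo $k$ fijo. El caso base $n=1$ se reduce al principio del palomar: una partición $P:\mathbb{N}\to k$ colorea $\mathbb{N}$ con un número finito de colores, luego al menos una clase $P^{-1}(\{c\})$ es infinita y trivialmente homogénea para $P$.

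Para el paso inductivo, supóngase el resultado para $n$ y sea $P:[\mathbb{N}]^{n+1}\to k$. Construiremos recursivamente una sucesión estrictamente creciente $a_0<a_1<\cdots$ en $\mathbb{N}$, una cadena decreciente de subconjuntos infinitos $\mathbb{N}=A_0\supset A_1\supset A_2\supset\cdots$ con $a_i=\min A_i$, y una sucesión de colores $(c_i)_{i\in\mathbb{N}}\subset\{0,\dots,k-1\}$, tales que para todo $i$, $P(\{a_i\}\cup X)=c_i$ siempre que $X\in[A_{i+1}]^n$. Dados $a_i$ y $A_i$, pasamos a la etapa siguiente considerando la partición inducida $P_i:[A_i\setminus\{a_i\}]^n\to k$ definida por $X\mapsto P(\{a_i\}\cup X)$; como $A_i\setminus\{a_i\}$ es infinito, la hipótesis inductiva arroja un subconjunto infinito $A_{i+1}\subseteq A_i\setminus\{a_i\}$ homogéneo para $P_i$, digamos con valor constante $c_i$, y definimos $a_{i+1}:=\min A_{i+1}$. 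Observe que esto garantiza $a_{i+1}>a_i$ y $a_m\in A_m\subseteq A_{i+1}$ para todo $m\geq i+1$.

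Una vez completada la recursión, aplicamos de nuevo el principio del palomar, esta vez a la sucesión $(c_i)$ que toma valores en el conjunto finito $\{0,\dots,k-1\}$: existen un color $c$ y un conjunto infinito de índices $j_0<j_1<j_2<\cdots$ con $c_{j_\ell}=c$ para todo $\ell$. Afirmo que $H:=\{a_{j_\ell}:\ell\in\mathbb{N}\}$ es infinito y homogéneo para $P$ con color $c$. En efecto, dado $Y\in[H]^{n+1}$ con menor elemento $a_{j_{\ell_0}}$, los restantes $n$ elementos son $a_{j_{\ell_1}},\dots,a_{j_{\ell_n}}$ con $\ell_0<\ell_1<\cdots<\ell_n$; como $j_{\ell_p}\geq j_{\ell_0}+1$ y la cadena es decreciente, $a_{j_{\ell_p}}\in A_{j_{\ell_p}}\subseteq A_{j_{\ell_0}+1}$, de modo que $Y\setminus\{a_{j_{\ell_0}}\}\in[A_{j_{\ell_0}+1}]^n$ y por la propiedad garantizada en la construcción $P(Y)=c_{j_{\ell_0}}=c$. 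El paso más delicado es precisamente esta verificación de homogeneidad, que depende enteramente de mantener correctamente el anidamiento $A_0\supset A_1\supset\cdots$ y la relación $a_i=\min A_i$ durante la recursión; el resto, es decir, la aplicación iterada de la hipótesis inductiva y los dos usos del principio del palomar, es rutinario y no requiere apelar al Lema de König.
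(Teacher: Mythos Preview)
Your proof is correct and follows essentially the same approach as the paper: induction on the exponent, recursive construction of a nested chain of infinite sets $A_0\supset A_1\supset\cdots$ together with distinguished points $a_i\in A_i$ obtained by applying the inductive hypothesis to the induced colouring $X\mapsto P(\{a_i\}\cup X)$, and a final pigeonhole on the resulting sequence of colours. The only cosmetic differences are that the paper starts the induction at $n=0$ rather than $n=1$ and picks $a_i\in A_i$ arbitrarily instead of as $\min A_i$; your choice makes the monotonicity $a_{i+1}>a_i$ explicit and your verification of homogeneity is more carefully spelled out, but the underlying argument is identical.
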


\begin{proof}
 Procedemos por inducción sobre $n$. Para $n=0$ no hay nada que probar, pues $f$ es constante sobre $[A]^{0}=\{\emptyset\}$. Sea $n>0$.
\bigskip

\noindent Definimos recursivamente una sucesión decreciente $A_{0}\supset A_{1}\supset ...$ de subconjuntos infinitos de $A$ y una sucesión $a_{0}, a_{1}, ...$ de elementos de $A$ con $a_{i}\in A_{j}$ sólo si $i\geq j$.
\bigskip

\noindent Comenzamos con $A_{0}=A$. Supongamos que ya se construyó $A_{i}$. Sea $a_{i}\in A_{i}$ arbitrario. Definimos $f_{i}:[A_{i}\setminus\{a-{i}\}]^{n-1}\to m$ mediante $f_{i}(b)=f(\{a_{i}\}\cup b)$. Como $A_{i+1}$, escogemos un subconjunto infinito $f_{i}$-homogéneo de $A_{i}\setminus\{a_{i}\}$.
\bigskip

\noindent Sea $m_{i}$ el valor que toma $f_{i}$ en $[A_{i+1}]^{n-1}$. Entonces, para cada $k<m$ el conjunto $B=\{a_{i}:m_{i}=k\}$ es $f$-homogéneo: cada subconjunto de $n$ elementos $c$ de $B$ tiene la forma $\{a_{i}\}\cup b$ para alguna $b\in [A_{i+1}]^{n-1}$. Se tiene $f(c)=f_{i}(b)=k$. Existe entonces una $k<m$ tal que $m_{i}=k$ para una cantidad infinita de $i\in\omega$. $B$ es infinito para esta $k$.
\end{proof}

\begin{theorem}[Teorema Finito de Ramsey] Para todos $k, n, m$ números naturales, existe un $l$ natural, tal que $l\to(m)_{k}^{n}$.
\bigskip
\end{theorem}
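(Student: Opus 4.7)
El plan es proceder por contradicción, combinando el Teorema Infinito de Ramsey ya establecido con el Lema de Köning. Supongamos, fijados $k, n, m$, que no existe ningún $l$ tal que $l\to(m)_{k}^{n}$. Entonces, para cada $l\in\omega$ podemos elegir una partición ``mala'' $P_{l}:[l]^{n}\to k$, es decir, una que no admite ningún subconjunto homogéneo de cardinalidad $m$.

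Con estas particiones en mano, construiría un árbol $T$ cuyos nodos en el nivel $l$ son las particiones $P:[l]^{n}\to k$ sin subconjuntos homogéneos de cardinalidad $m$, ordenado por restricción: $P\leq_{T} P'$ si, y sólo si, $P'$ extiende a $P$. Cada nivel $T_{l}$ es finito, pues hay a lo sumo $k^{\binom{l}{n}}$ funciones de $[l]^{n}$ en $k$. Cada nivel $T_{l}$ es además no vacío: dado cualquier $l'\geq l$, la restricción de $P_{l'}$ a $[l]^{n}$ sigue siendo mala, ya que todo subconjunto homogéneo para la restricción lo sería también para $P_{l'}$. Así, $T$ es un $\aleph_{0}$-árbol en el sentido de la definición presentada.

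Aplicando el Lema de Köning obtengo una rama cofinal $(Q_{l})_{l<\omega}$ en $T$; como los $Q_{l}$ son compatibles entre sí por construcción, definen una partición global $P:[\omega]^{n}\to k$ mediante $P(s)=Q_{l}(s)$ para cualquier $l$ suficientemente grande como para que $s\in[l]^{n}$. Por el Teorema Infinito de Ramsey, $P$ admite un subconjunto homogéneo infinito $H\subseteq\omega$; tomando los primeros $m$ elementos de $H$ y el menor $l$ que los contenga a todos, obtenemos un subconjunto homogéneo de cardinalidad $m$ para $Q_{l}$, contradiciendo que $Q_{l}$ sea un nodo de $T$.

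La dificultad principal estará en la identificación cuidadosa entre la rama cofinal y la partición global: verificar que el árbol cumple los requisitos del Lema de Köning (la no vacuidad de cada nivel depende crucialmente de la hipótesis de contradicción) y que la compatibilidad de las restricciones sucesivas produce efectivamente una partición $P:[\omega]^{n}\to k$ bien definida. El resto de la argumentación es una aplicación directa del Teorema Infinito de Ramsey junto con el hecho, esencialmente por compacidad, de que cualquier propiedad finita---como ``tener un subconjunto homogéneo de cardinalidad $m$''---ya se manifiesta en algún nivel finito del árbol.
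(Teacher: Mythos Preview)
Tu propuesta es correcta y sigue esencialmente la misma ruta que la del artículo: contradicción, árbol de particiones ``malas'' ordenadas por restricción, Lema de K\"onig para extraer una rama cofinal, unión de la rama para obtener una partición global $P:[\omega]^{n}\to k$, y Teorema Infinito de Ramsey para producir un conjunto homogéneo de tamaño $m$ que contradice la malicia de algún nivel finito. La única diferencia es cosmética: tú justificas la no vacuidad de cada nivel restringiendo un $P_{l'}$ previamente elegido, mientras que el artículo simplemente observa que la hipótesis de contradicción garantiza directamente una partición mala en cada $l$.
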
 

\begin{proof}
Supóngase que no hay tal $l$. Para cada  $l<\omega$, sea 

\begin{small}
\begin{center}
$T_{l}:=\left\lbrace f:[l]^{n}\to k:\right.$ no existe un subconjunto de $l$ de tamaño $m$ homogéneo para $\left.f\right\rbrace$.
\end{center}
\end{small}

\noindent  Claramente, cada $T_{l}$ es finito, ya que hay finitas particiones para conjuntos finitos. Sea $f\in T_{l+1}$, luego hay un único $g\in T_{l}$ tal que $g\subset f$. Así, si ordenamos a 

\[
T=\bigcup_{l<\omega}T_{l}
\]

\noindent por inclusión, obtenemos un arbol finito. Cada $T_{j}\neq\emptyset$. Luego, obtenemos un arbol de ramaje finito. Por el Lema de Köning, podemos encontrar $f_{0}\subset f_{1}\subset ...$ con cada $f_{i}\in T_{i}$. 
\bigskip

\noindent Sea $f=\bigcup f_{i}$, entonces, $f:[\mathbb{N}]^{n}\to k$. Por el Teorema Infinito de Ramsey, hay un $X\subset \mathbb{N}$ infinito homogéneo para $f$. Sea $x_{1}, ..., x_{m}$, los primeros $m$ elementos de  $X$ y sea $s>x_{m}$, entonces, $\{x_{1}, ..., x_{m}\}$ es homogéneo para $f_{s}$. Contradicción. 
\end{proof}

\subsection{Paris-Harrington}

\begin{theorem}[Principio de Paris-Harrington] Para todos números naturales $n, r$ y $k$ hay un número natural $m$ tal que 

\[
\xymatrix{
m\ar[r]_{*}&(k)_{r}^{n}\\
}.
\]
\end{theorem}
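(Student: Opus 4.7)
The plan is to imitate the proof of the Finite Ramsey Theorem given above, replacing the failure condition ``no homogeneous subset of cardinal $m$'' by the stronger ``no \emph{relatively large} homogeneous subset of cardinal at least $k$''. The main tools will again be the Lemma of König together with the Infinite Ramsey Theorem.

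First I would argue by contradiction. Suppose no such $m$ exists for some fixed $n, r, k$. Then for every $l<\omega$ there is at least one coloring $f:[l]^{n}\to r$ admitting no relatively large homogeneous subset of cardinal $\geq k$. I set
\[
T_{l}:=\{f:[l]^{n}\to r : f \text{ admite ningún homogéneo relativamente grande de cardinal} \geq k\},
\]
and order $T:=\bigcup_{l<\omega} T_{l}$ by restriction, viewing colorings as sets of ordered pairs. Each $T_{l}$ is finite, nonempty by hypothesis, and every $f\in T_{l+1}$ restricts to an element of $T_{l}$. Hence $T$ is an $\aleph_{0}$-tree, and the Lemma of König yields a cofinal branch $f_{0}\subset f_{1}\subset\cdots$ with $f_{l}\in T_{l}$; its union $f:=\bigcup_{l} f_{l}$ is a coloring $f:[\mathbb{N}]^{n}\to r$ whose restriction to each $[l]^{n}$ is the bad coloring $f_{l}$.

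Next I would invoke the Infinite Ramsey Theorem to extract an infinite $f$-homogeneous set $X\subset\mathbb{N}$. The crucial step is to pass from $X$ to a \emph{finite} relatively large $f$-homogeneous subset $Y$. Setting $x_{0}:=\min X$ and $N:=\max(k, x_{0})$, I let $Y\subset X$ consist of the first $N$ elements of $X$. Then $\min Y=x_{0}$, so $\mathrm{card}(Y)=N\geq x_{0}=\min Y$ makes $Y$ relatively large, and $\mathrm{card}(Y)\geq k$ gives it the required size. Picking $l$ large enough that $Y\subseteq\{0,1,\dots,l-1\}$, the set $Y$ is also $f_{l}$-homogeneous, contradicting the defining property of $T_{l}$.

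The hard part is precisely this passage from an infinite homogeneous set to a finite \emph{relatively large} one: the Finite Ramsey Theorem only asks for a prescribed size $m$, whereas here the required size depends on $\min Y$, which must be controlled in advance. The observation that unlocks the argument is that we are free to take $Y$ to start at $\min X$, after which the infiniteness of $X$ allows us to absorb enough further elements to satisfy both $\mathrm{card}(Y)\geq\min Y$ and $\mathrm{card}(Y)\geq k$ simultaneously.
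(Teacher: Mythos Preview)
Your proof is correct and follows essentially the same route as the paper: contradiction, a tree of bad colorings ordered by restriction, König's Lemma to obtain a global coloring $f:[\mathbb N]^n\to r$, and the Infinite Ramsey Theorem to extract an infinite homogeneous set from which a finite relatively large homogeneous piece is carved. Your final step is in fact more explicit than the paper's, which merely says to take $m$ ``suficientemente grande (comparado con $k$ y $\min H$)'' and use $H\cap m$, whereas you specify $N=\max(k,\min X)$ and take the first $N$ elements of $X$.
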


\begin{proof}
Sean $n, r$ y $k$ números naturales. Supóngase que no existe tal $m$. Sea $P$ un contraejemplo para $m$. Si $P$ es una partición de $[m]^{n}$ en $r$ partes con ningún conjunto relativamente grande de tamaño a lo más $k$. Podemos ver el conjunto de contraejemplos como un arbol infinito de ramaje finito, es decir, si $P$ y $P'$ son contraejemplos para $m$ y $m'$ respectivamente, ponemos $P$ bajo $P'$ en nuestro arbol sólo si $m<m'$ y $P$ es una restricción de $P'$ a $[m]^{n}$.
\bigskip

\noindent Por el Lema de König hay un $P:[\omega]^{n}\to r$ tal que para todo $m$, la restricción de $P$ a $[m]^{n}$ es un contraejemplo para $m$. Por el Teorema Infinito de Ramsey, existe un $H\subset \omega$ infinito homogéneo para $P$. Pero entonces, al tomar $m$ suficientemente grande (comparado con $k$ y $\min H$) vemos que $H\cap m$ es, después de todo, un conjunto homogéneo relativamente grande para $P\upharpoonright [m]^{n}$ de tamaño a lo menos $k$.
\end{proof}

\noindent PH es una variante del Teorema Finito de Ramsey, pues en él sólo pedimos la condición adicional de que $H$ sea relativamente grande; y resulta deducible a partir del Teorema Infinito de Ramsey. Además PH es expresable en el lenguaje de PA, y sin embargo indemostrable en PA. Puede demostrarse que

\[
\forall n\in\mathbb{N}, \mathrm{PA}\vdash\forall r, k\exists m (\xymatrix{m\ar[r]_{*}&(k)^{n}_{r}}).
\]

\noindent Es decir, para cada número natural $n$ fijo, podemos formalizar la prueba de $\forall r, k\exists m (\xymatrix{m\ar[r]_{*}&(k)^{n}_{r}})$ en PA.

\subsubsection{Expresabilidad de PH en el lenguaje $\mathcal{L}_{A}$}  El Teorema Finito de Ramsey es una afirmación sobre números naturales, de esta forma, sería más preciso demostrarle sin recurrir a métodos infinitarios \cite{maker}. Sin embargo, estos métodos requieren presentarse con un enfoque diferente al que aquí planteamos. El lector interesado en una formulación meramente finitaría puede consultar \cite{graham}. 
\medskip

\noindent PH es expresable en el Lenguaje de la Aritmética de forma natural. Podemos ver tal factor de dos maneras posibles. Primeramente, debido a que PA es equivalente al marco axiomatico ZF para la Teoría de Conjuntos, si reemplazamos en ZF el Axioma de Infinitud por su negación. De esta manera, se obtiene que PH es formalizable en PA sin necesidad de ningún tipo de codificación \cite{paris}. Por otra parte, todas las nociones sobre conjuntos y particiones (finitarias) son expresables en el lenguaje de PA usando códigos de Gödel. El razonamiento es el siguiente: hay fórmulas $S(u)$, $l(u, v)$ y $e(v, u, i)$ en el lenguaje $\mathcal{L}_{A}$ tales que en $\mathbb{N}$, el modelo natural de la aritmética, $S(u)$ define el conjunto de códigos para secuencias finitas, $l(u, v)$ dice que $u$ es el código de una secuencia de longitud $v$, y $e(v, u, i)$ dice que $v$ es el $i$-ésimo elemento codificado por $u$ \cite{maker}. De esta manera, a cada conjunto finito le asociamos una secuencia finita, de modo que la longitud de esta secuencia, será el correspondiente cardinal del conjunto en cuestión. Codificamos las secuencias con la función $\beta$ de Gödel \cite{tossillo}. 
\medskip

\noindent En primera instancia, fijamos el tamaño de los conjuntos. Ahora, pedimos que todos los conjuntos de cierto tamaño tengan una propiedad dada. De esta manera, según expusimos antes, podemos decir: todo código que represente una secuencia de longitud de cierto tamaño cumple cierta propiedad. Así, acotamos los tamaños de los conjuntos dentro de los cuantificadores. Resulta entonces que todas las propiedades de los conjuntos se pueden traducir a los códigos de Gödel de tales conjuntos. 
\medskip

\noindent Para codificar particiones de conjuntos finitos procedemos así: sean $m, n, c\in\mathbb{N}$ con $n\leq m$ y pensemos en ellos como conjuntos finitos. Escribimos `$H\in\mathrm{Part}([m]^{n}, c)$' para índicar que `$H$ es una partición de $[m]^{n}$ en $c$ partes'. daremos una expresión de $\mathcal{L}_{A}$ para esto. Notése que cualquier elemento en  $[m]^{n}$ puede verse como $\{m-i_{0}, ..., m-i_{n-1}\}$ con $1\leq i_{j}\leq m$ para $0\leq j\leq n-1$. Ahora, $[m]^{n}$ tiene exactamente $\binom{m}{n}=\frac{m!}{n!(m-n)!}$ elementos. Luego, los podemos enlistar como

\[
[m]^{n}=m_{1}, ..., m_{\binom{m}{n}}.
\]

\noindent Existe sólo un número finito de particiones de conjuntos finitos\footnote{Pues, observése que una partición $P:[I]^{k}\to \sigma$ es simplemente un subconjutno del producto cartesiano $[I]^{k}\times\sigma$ y, al ser $[I]^{k}, \sigma$ finitos, necesariamente $[I]^{k}\times\sigma$ es finito, de modo que sólo hay finitas particiones de $[I]^{k}$ en $\sigma$ partes. }. Sea $H\in\mathrm{Part}([m]^{n}, c)$, cada partición  es un conjunto de pares, digamos

\[
H:=\left\{(m_{\delta}, c_{\alpha}):1\leq\delta\leq\binom{m}{n}\wedge 0\leq\alpha\leq\ c\right\}
\]

\noindent Así, para codificar  $H$ debemos seguir los siguientes tres simples pasos: 

\begin{itemize}
\item[1.] Codifique sobre cada conjunto $m_{\delta}$, para $1\leq \delta\leq \binom{m}{n}$ de forma natural (según se expuso en Capítulo 1, Sección 3.).
\item[2.] Codifique sobre cada par $(m_{\delta}, c_{\alpha})$, $1\leq\delta\leq\binom{m}{n}\wedge 0\leq\alpha\leq\ c$ con la función par de Gödel (Capítulo 1, Sección 3.).
\item[3.] Codifique sobre el conjunto obtenido en el Paso 2, según se hizo en el Paso 1.
\end{itemize}

\noindent Procedamos con el Paso 1. Sea $m_{\delta}\in[m]^{n}$, para $1\leq \delta\leq \binom{m}{n}$. Luego:

\begin{eqnarray*}
\#(m_{\delta})&=&\#(\{m-i_{0}, ..., m-i_{n-1}\})\\
&=&\prod_{\substack{1\leq_{i_{j}}\leq m\\0\leq j\leq n-1}}p_{i_{j}}^{m-i_{j}}
\end{eqnarray*}

\noindent para ciertos $1\leq i_{0}, ..., i_{n-1}\leq m$. Así, para cada par $(m_{\delta}, c_{\alpha})$ tenemos un único par correspondiente 

\[
\left(\prod_{\substack{1\leq_{i_{j}}\leq m\\0\leq j\leq n-1}}p_{i_{j}}^{m-i_{j}}, c_{\alpha} \right)
\]

\noindent Y para éste, procedemos con el Paso número 2. Así, obtenemos el conjunto

\[
\left\{\left(\prod_{\substack{1\leq_{i_{j}}\leq m\\0\leq j\leq n-1}}p_{i_{j}}^{m-i_{j}}, c_{\alpha} \right):0\leq\alpha\leq c-1\right\}.
\]

\noindent Luego,  

\begin{eqnarray*}
\#(m_{\delta}, c_{\alpha})&:=&\left\langle\#(m_{\delta}), c_{\alpha}\right\rangle\\
&=&\left\langle\#(\{m-i_{0}, ..., m-i_{n-1}\}), c_{\alpha}\right\rangle\\
&=&\left\langle\prod_{\substack{1\leq_{i_{j}}\leq m\\0\leq j\leq n-1}}p_{i_{j}}^{m-i_{j}}, c_{\alpha} \right\rangle\\
&=&\frac{\left[\displaystyle\prod_{\substack{1\leq_{i_{j}}\leq m\\0\leq j\leq n-1}}p_{i_{j}}^{m-i_{j}}+c_{\alpha}+1\right]\left[\displaystyle\prod_{\substack{1\leq_{i_{j}}\leq m\\0\leq j\leq n-1}}p_{i_{j}}^{m-i_{j}}+c_{\alpha}\right]}{2}+c_{\alpha} 
\end{eqnarray*}

\noindent Y, obtenemos el conjunto

\[
\left\{
\frac{\left[\displaystyle\prod_{\substack{1\leq_{i_{j}}\leq m\\0\leq j\leq n-1}}p_{i_{j}}^{m-i_{j}}+c_{\alpha}+1\right]\left[\displaystyle\prod_{\substack{1\leq_{i_{j}}\leq m\\0\leq j\leq n-1}}p_{i_{j}}^{m-i_{j}}+c_{\alpha}\right]}{2}+c_{\alpha} :0\leq\alpha\leq c-1
\right\}.
\]

\noindent Finalmente completamos el Paso 3, poniendo  

\begin{equation*}
\#(H):=\prod_{\substack{0\leq r\leq \binom{m}{n}\\0\leq \alpha\leq c-1}}p_{r}^{\frac{\left[\displaystyle\prod_{\substack{1\leq_{i_{j}}\leq m\\0\leq j\leq n-1}}p_{i_{j}}^{m-i_{j}}+c_{\alpha}+1\right]\left[\displaystyle\prod_{\substack{1\leq_{i_{j}}\leq m\\0\leq j\leq n-1}}p_{i_{j}}^{m-i_{j}}+c_{\alpha}\right]}{2}+c_{\alpha}}
\end{equation*}

\noindent\textbf{PH.}Para cualquier números naturales $b, e, y$ codificando secuencias de longitud $n, c$ y al menos   $\lambda$, respectivamente, existe un número natural $a$ codificando una secuencia de longitud $m$ tal que $\xymatrix{m\ar[r]_{*}&(k)_{r}^{n}\\}$, i.e., para cualquier $H\in\mathrm{Part}([m]^{n}, c)$, existe un único número natural $\beta\leq\mathrm{Long}(e)$ tal que para cualquier $1\leq k_{1}\leq\binom{m}{n}$, $(m_{k_{1}}, c_{\beta})\in H$ y no existe $\alpha\in\mathbb{N}$ tal que para algún $0\leq k_{2}\leq\binom{m}{n}$, $(m_{k_{2}}, c_{\alpha})\in H$  siempre que $b$ es tal que existe una natural $s$ codificando una secuencia tal que $y=b*s$ con $\mathrm{Long}(y)\geq(y)_{0}$. 

\begin{remark}\rm En la formulación anterior, usamos  $H$ como un conjunto de pares, pero el lector no debe olvidar, que $H$ también puede ser visto como un número natural. En este trabajo  conseguimos la sentencia siguiente para el Teorema Finito de Ramsey, en el cual tratamos la noción se secuencia en lugar de la de conjunto, y la noción de longitud de una secuencia en lugar de la de cardinal.
\end{remark}

\begin{center}
\fbox{
\begin{minipage}[b][1.05\height]
[t]{1\textwidth} 
$(\forall b\in\mathrm{Seq})(\forall e\in\mathrm{Seq})(\forall y\in\mathrm{Seq})(\exists a\in\mathrm{ Seq})\{[\mathrm{Long}(b)=n\wedge\mathrm{Long}(a)=m\wedge\mathrm{Long}(e)=c\wedge\mathrm{Long}(y)\leq\lambda\wedge n\leq m]:(\forall H\in\mathrm{Part}([m]^{n}, c))[(\exists s\in\mathrm{Seq})[y=b*s]\to(\exists!\beta\leq\mathrm{Long}(e)\in\mathbb{N}))[(\forall k_{1}\leq\binom{m}{n}\in\mathbb{N})[(m_{k_{1}}, c_{\beta})\in H]\wedge(\neg\exists\alpha\leq\mathrm{Long}(e)\in\mathbb{N})[(\exists k_{2}\leq\binom{m}{n}\in\mathbb{N})[(m_{k_{2}}, c_{\alpha})\in H]]]] \wedge \mathrm{Long}(y)\geq(y)_{0}\} $
\end{minipage}}
\end{center}

\noindent  Teniendo en cuenta los significados de Seq, Long y que todo número $i$ en $\mathbb{N}$ se puede escribir como $\underbrace{1+1+1+...+1}_{i-\mathrm{veces}}$, y definimos $a*b=a\cdot\prod_{x\leq\mathrm{Long}(b)}p_{\mathrm{Long}(a)+x+1}^{(b)_{x}+1}$, si $a, b\neq1$, y acordando que $\mu x\leq g[p_{x}|g\wedge\neg(p_{x+1}|g)])=0$ cuando $g\not\in$Seq o $g=1$, tenemos la siguiente sentencia del lenguaje de la Aritmética para PH.

\begin{center}
\fbox{
\begin{minipage}[b][1.05\height]%
[t]{1\textwidth} 
$
 [(\forall b(b=1\vee(b>1\wedge\forall x_{1}\leq b[p_{x_{1}+1}|b\to p_{x_{1}}|b])))
  (\forall e(e=1\vee(e>1\wedge\forall x_{2}\leq e[p_{x_{2}+1}|e\to p_{x_{2}}|e])))
  (\forall y(y=1\vee(y>1\wedge\forall x_{3}\leq y[p_{x_{3}+1}|y\to p_{x_{3}}|y])))
  (\exists a(a=1\vee(a>1\wedge\forall x_{4}\leq a[p_{x_{4}+1}|a\to p_{x_{4}}|a])))]\\
 \left\{\begin{array}{c}
 \\
  \\
\end{array}\right
 [
 \mu x_{5}\leq b[p_{x_{5}}|b\wedge\neg(p_{x_{5}+1}|b)])=n\wedge\\
 \mu x_{6}\leq a[p_{x_{5}}|a\wedge\neg(p_{x_{5}+1}|a)])=m\wedge
 \mu x_{7}\leq e[p_{x_{5}}|e\wedge\neg(p_{x_{5}+1}|e)])=c\wedge\\
 \mu x_{8}\leq y[p_{x_{5}}|y\wedge\neg(p_{x_{5}+1}|y)])\leq\lambda\wedge n\leq m
 ]:\\
 \left[
 \forall H=\displaystyle\prod_{\substack{0\leq r\leq n-1\\0\leq \alpha\leq c-1}}p_{r}^{\frac{\left[\displaystyle\prod_{\substack{1\leq_{i_{j}}\leq m\\0\leq j\leq n-1}}p_{i_{j}}^{m-i_{j}}+c_{\alpha}+1\right]\left[\displaystyle\prod_{\substack{1\leq_{i_{j}}\leq m\\0\leq j\leq n-1}}p_{i_{j}}^{m-i_{j}}+c_{\alpha}\right]}{2}+c_{\alpha}}
 \right]\\
 (
 \exists s(s=1\vee(s>1\wedge\forall x_{6}\leq s[p_{x_{6}+1}|s\to p_{x_{6}}|s]))\\
\left(y=b\displaystyle\prod_{x_{7}\leq\mu x_{8}\leq s[p_{x_{8}}|s\wedge\neg(p_{x_{8}+1}|s)]}p_{\mu x_{5}\leq y[p_{x_{5}}|y\wedge\neg(p_{x_{5}+1}|y)]+x_{7}+1}^{\mu x_{9}\leq x_{7}+1[p_{x_{7}}^{x_{9}+1}|s\wedge\neg(p_{x_{7}}^{x_{9}+1}|s)]+1}\right)\to\\
(\exists!\beta=\underbrace{1+\cdots+1}_{\beta-\mathrm{veces}}\leq\mu x_{7}\leq e[p_{x_{7}}|e\wedge\neg(p_{x_{7}+1}|e)]))\\
\left[\left(\forall k_{1}=\underbrace{1+\cdots+1}_{k_{1}-\mathrm{veces}}\leq\\
\frac{m(m-1)(m-2)\cdots(m-(m-1))}{n(n-1)\cdots(n-(n-1))(m-n)(m-n-1)\cdots((m-n-(m-n-1)))}=h\right)\right.\\\
\exists\eta=\underbrace{1+\cdots+1}_{\eta-\mathrm{veces}}\leq h
\left[p^{\#(\#(m_{k_{1}}), c_{\beta}) }_{\eta}|H \right]\wedge\\(\neg\exists \alpha=\underbrace{1+\cdots+1}_{a-\mathrm{veces}}\leq\mu x_{7}\leq e[p_{x_{5}}|e\wedge\neg(p_{x_{5}+1}|e)]))\\
\left[\left(\exists k_{2}=\underbrace{1+\cdots+1}_{k_{2}-\mathrm{veces}}\leq\frac{m(m-1)(m-2)\cdots(m-(m-1))}{n(n-1)\cdots(n-(n-1))(m-n)(m-n-1)\cdots((m-n-(m-n-1)))}\right)\right.\\
\exists\rho=\underbrace{1+\cdots+1}_{\rho-\mathrm{veces}}\leq h
\left.\left.\left.\left(p_{\rho}^{\#(\#(m_{k_{2}}), c_{\alpha})}|H\right)\right]\right]\right]\wedge\\
\mu x_{8}\leq y[p_{x_{5}}|y\wedge\neg(p_{x_{5}+1}|y)])\geq\mu x_{10}\leq 0+1[p_{x_{10}}^{0+1}|y\wedge \neg(p_{x_{10}}^{0+2}|y)]
\left.\begin{array}{c}
 \\
  \\
\end{array}\right\}
$
\end{minipage}}
\end{center}

\noindent Aquí, $\#(\#(m_{k_{2}}), c_{\alpha})$ se interpreta de forma obvia, y ha sido escrito de esta forma breve para mayor comodidad en la lectura y notación. Omitiendo  $\mu x_{8}\leq y[p_{x_{5}}|y\wedge\neg(p_{x_{5}+1}|y)])\geq\mu x_{10}\leq 0+1[p_{x_{10}}^{0+1}|y\wedge \neg(p_{x_{10}}^{0+2}|y)]$ tendríamos  el Teorema Finito de Ramsey.

\section{La independencia de PH en PA}

\noindent Aquí presentamos la prueba de independencia de PH sobre PA brindada por Jeff Paris y Leo Harrington en \cite{paris}.  Definimos una cierta teoría $T$, y demostraremos que sobre PA puede probarse que $\mathrm{Con}(T)\to\mathrm{Con}(PA)$. Y concluiremos al probar que PH implica $\mathrm{Con}(T)$ es también un teorema de PA. Expandimos el lenguaje $\mathcal{L}_{A}$ agregandole una colección infinita numerable de constantes $c_{0}, c_{1}, ...$. Sea $T$, la teoría engendrada en el seno de los axiomas:

\begin{itemize}
\item[i)] Las ecuaciones recursivas usuales definidas para $+, \cdot, <$ más los axiomas de inducción sólo para fórmulas límitadas.
\item[ii)] Para cada $i=0, 1, ...$, el axioma $(c_{i})^{2}<c_{i+1}$.
\item[iii)] Para cada subconjunto finito $i=i_{1}, ..., i_{r}$ de $\omega$, sea $c(i)=c_{i_{1}}, ..., c_{i_{r}}$. Para cada $i<k, k'$ y cada sentencia $\psi(y, z)$ (donde $k, k'$ y $z$ tienen todas el mismo tamaño) tenemos el axioma:

\[
\forall y<c_{i}[\psi(y; c(k))\leftrightarrow\psi(y; c(k'))].
\]
\end{itemize}

\begin{proposition}
$\mathrm{Con}(T)$ implica $\mathrm{Con}(PA)$.
\end{proposition}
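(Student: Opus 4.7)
El plan es argumentar semánticamente: supongamos que $T$ es consistente y, vía el Teorema de Completitud, fijemos $M \models T$. Construiremos dentro de $M$ un modelo de PA. Defino el corte $I := \{a \in M : a < c_i \text{ para algún } i < \omega\}$. El axioma (ii), $(c_i)^2 < c_{i+1}$, garantiza que si $a, b < c_i$ entonces $a \cdot b \leq c_i^2 < c_{i+1}$ y $a + b < c_{i+1}$; así $I$ es cerrado bajo $+$ y $\cdot$ y constituye una subestructura de $M$. Los axiomas básicos de PA (asociatividad, distributividad, orden discreto, existencia de sucesor y predecesor, $0$ mínimo, ausencia de máximo, monotonía del orden con la suma) son $\Pi_1$ o esencialmente universales; son verdaderos en $M$ y por tanto se heredan a $I$. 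Resta verificar el esquema de inducción completo en $I$.

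Para ello asocio a cada $\mathcal{L}_A$-fórmula $\varphi(x, \bar{y})$ y cada $k < \omega$ la \emph{traducción acotada} $\varphi^{[k]}(x, \bar{y})$ que reemplaza cada cuantificador no acotado por uno acotado por $c_k$. Mi paso central es el siguiente \textbf{Lema Clave:} si $\bar{a}, b \in I$ satisfacen $\bar{a}, b < c_n$, entonces para todos $k, k' > n$ se tiene $M \models \varphi^{[k]}(b, \bar{a}) \Leftrightarrow M \models \varphi^{[k']}(b, \bar{a})$, y además $I \models \varphi(b, \bar{a}) \Leftrightarrow M \models \varphi^{[k]}(b, \bar{a})$. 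La primera equivalencia es consecuencia directa del esquema (iii) de indiscernibilidad aplicado a parámetros acotados por $c_n$. La segunda se prueba por inducción en la complejidad de $\varphi$: los casos atómico y proposicional son inmediatos, y el caso existencial es el delicado --- se emplea que todo testigo en $I$ vive en algún $[0, c_m)$ (y por hipótesis inductiva queda testificado por $\varphi^{[k]}$ para $k > \max(n,m)$) y, recíprocamente, todo testigo acotado por $c_k$ pertenece automáticamente a $I$.

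Con el Lema Clave, supongamos por contradicción que existe $b \in I$ con $I \models \varphi(0, \bar{a}) \wedge \forall x(\varphi(x, \bar{a}) \to \varphi(x+1, \bar{a})) \wedge \neg\varphi(b, \bar{a})$. Fijo $n$ con $\bar{a}, b < c_n$ y pongo $k = n+2$. Por el lema, $M \models \varphi^{[k]}(0, \bar{a}) \wedge \neg\varphi^{[k]}(b, \bar{a})$. Como $\varphi^{[k]}$ es \emph{acotada}, el axioma (i) de $T$ nos da inducción completa para ella en $M$; existe así $b' < b$ tal que $M \models \varphi^{[k]}(b', \bar{a}) \wedge \neg\varphi^{[k]}(b'+1, \bar{a})$. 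Puesto que $b'+1 \leq b < c_n$, volvemos a aplicar el Lema Clave y concluimos $I \models \varphi(b', \bar{a}) \wedge \neg\varphi(b'+1, \bar{a})$, contradiciendo la hipótesis del paso inductivo. Por consiguiente $I \models \mathrm{PA}$, luego PA es satisfacible y, por completitud, consistente.

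El obstáculo principal es el Lema Clave, y dentro de él el caso existencial: allí la indiscernibilidad (iii) es el ingrediente esencial, pues garantiza que la traducción acotada no depende, más allá del umbral $n$, del índice $k$ escogido, lo que permite ``absorber'' sin alterar la verdad de la fórmula un testigo acotado por un $c_k$ grande y tratarlo como elemento auténtico de $I$.
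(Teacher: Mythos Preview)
Tu estrategia global coincide con la del artículo: tomar un modelo $\mathcal{U}\models T$, definir el corte $I=\{a:\exists i\ a<c_i\}$, comprobar que es cerrado bajo $+,\cdot$ por el axioma (ii), y probar $I\models\mathrm{PA}$ mediante un lema de traducción que relaciona la verdad en $I$ con la verdad en $\mathcal{U}$ de una fórmula acotada por las constantes. El problema está en tu traducción concreta $\varphi^{[k]}$, que acota \emph{todos} los cuantificadores por la \emph{misma} constante $c_k$.

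Tu Lema Clave es falso tal como está enunciado. Contraejemplo: sea $\theta\equiv\exists x\,\forall y\,(y\le x)$, la sentencia ``hay un máximo''. Entonces $I\not\models\theta$ porque $I$ es cerrado bajo sucesor; pero $\theta^{[k]}\equiv\exists x<c_k\,\forall y<c_k\,(y\le x)$ es verdadera en $\mathcal{U}$ con testigo $x=c_k-1$ (que existe por el axioma del predecesor en (i)). Luego la equivalencia $I\models\theta\Leftrightarrow \mathcal{U}\models\theta^{[k]}$ falla. El fallo en tu esbozo inductivo aparece exactamente en la dirección inversa del caso existencial: tras obtener un testigo $d<c_k$ para $\exists x\,\psi$, necesitas aplicar la hipótesis de inducción a $\psi$ con parámetro $d$, lo que exige trabajar con cotas estrictamente mayores que $k$; pero la fórmula interna $\psi^{[k]}$ sigue usando $c_k$, y la indiscernibilidad (iii) no te permite desplazar esa cota porque $d$ puede satisfacer $c_{k-1}\le d<c_k$, de modo que no existe $i<k$ con $d<c_i$.

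El artículo evita esto usando una tupla de constantes distintas: su traducción $\theta^{*}(y;z_1,\dots,z_r)$ asigna una variable de cota \emph{diferente} a cada cuantificador, y al evaluar escoge una tupla creciente $c_{k_1}<\cdots<c_{k_r}$. Así, en el paso existencial $\theta^{*}=\exists x<z_1\,\psi^{*}(x,y;z_2,\dots,z_r)$, el testigo $b<c_{k_1}$ queda por debajo de todas las cotas restantes $c_{k_2},\dots,c_{k_r}$, y la hipótesis inductiva para $\psi^{*}$ se aplica sin obstáculo. En tu contraejemplo esto da $\theta^{*}(;c_{k_1},c_{k_2})=\exists x<c_{k_1}\,\forall y<c_{k_2}(y\le x)$, que ahora sí es falsa cuando $k_1<k_2$. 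La corrección de tu argumento consiste, por tanto, en reemplazar $\varphi^{[k]}$ por una traducción con una constante por nivel de cuantificación; hecho esto, tu deducción final de la inducción en $I$ vía inducción acotada en $\mathcal{U}$ es esencialmente la misma que la del artículo.
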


\begin{proof}
Sea $\mathcal{U}$ un modelo de $T$ e $I$ el segmento inicial de $\mathcal{U}$ del cual $a<c_{i}$ para algún $i\in\omega$. Por (ii), $I$ es cerrado bajo $+, \cdot$. Entonces, será suficiente mostrar los dos hechos siguientes:

\begin{itemize}
\item[A1.] $\mathcal{I}=(I, +, \cdot, <)\models PA$.
\item[A2.] Dados $i<k$, $a<c_{i}$ y $\theta(y)$, donde $k, a$ y $y$ son todos de longitud adecuada, 
\begin{center}
$\mathcal{I}\models\theta(a)$ si, y sólo si, $\mathcal{U}\models\theta^{*}(a; c(k))$.
\end{center}

\noindent Procedemos por inducción sobre $\theta$. Supóngase que $\theta$ es 

$$\exists x<z_{1}\psi^{*}(x, y, z_{2}, ..., z_{r}).$$

\noindent  Así, $\mathcal{I}\models\theta(a)$ si, y sólo si, para algún $b$ en $I$ y algún $j$ (con $\min(j)$ grande), $\mathcal{U}\models\psi^{*}(b, a, c(j))$, lo que ocurre si, y sólo si, para algún $k'$ (de nuevo con $\min(k')$ grande), $\mathcal{U}\models\theta^{*}(a, c(k'))$ lo que ocurre si, y sólo si $\mathcal{U}\models\theta^{*}(a; c(k))$.
\end{itemize}

\noindent Ahora bien, A1., es consecuencia de A2. En efecto, por (i), para toda $\theta$, $U$ satisface la inducción para $\theta^{*}$.
\end{proof}

\begin{proposition}
PH implica $\mathrm{Con}(T)$.
\end{proposition}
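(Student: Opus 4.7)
El plan consiste en razonar dentro de PA, suponer por el absurdo la existencia de una demostración $\pi$ de $0=1$ a partir de $T$, y obtener una contradicción aplicando PH. Como $\pi$ es finita, usa únicamente una cantidad finita de constantes, que podemos suponer son $c_0, c_1, \ldots, c_N$, y una cantidad finita de instancias del esquema (iii), correspondientes a una colección finita de fórmulas acotadas $\psi_1(\overline{y}; \overline{z}), \ldots, \psi_s(\overline{y}; \overline{z})$ con $|\overline{z}|\leq r$ para cierto $r$. La estrategia es construir naturales $d_0 < d_1 < \cdots < d_N$ en $\mathbb{N}$ que permitan interpretar $c_i := d_i$ de modo que la estructura $(\mathbb{N}; d_0, \ldots, d_N)$ satisfaga el fragmento finito de $T$ usado en $\pi$, contradiciendo la existencia de tal $\pi$.

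Para producir estos $d_i$'s, primero definiría una coloración $F$ sobre tuplas $\{x_0 < x_1 < \cdots < x_r\}$ asignándoles como color la secuencia (codificada) de valores de verdad en $\mathbb{N}$ de $\psi_j(\overline{y}; x_1, \ldots, x_r)$ para cada $j \leq s$ y cada $\overline{y}$ con entradas estrictamente menores que $x_0$. Como las $\psi_j$ son acotadas, sus valores de verdad son recursivos primitivos y por lo tanto $F$ es expresable en PA. El número de colores para cada $x_0$ está acotado por una función explícita de $x_0$ y $s$. Aplico entonces PH con $n=r+1$, un $k$ suficientemente grande y un adecuado control del número de colores (usando la variante equivalente en que el número de colores está acotado por $m$, lo cual se justifica precisamente porque la condición de relativamente grande obligará a $\min H$ a no exceder $|H|$), para obtener $m\in\mathbb{N}$ y un conjunto relativamente grande $H \subseteq m$ homogéneo para $F$ con $\mathrm{card}(H)\geq k$. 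La condición $\mathrm{card}(H)\geq \min H$ es crucial: asegura que todo $\overline{y}$ con entradas menores que $\min H$ queda cubierto por la homogeneidad, y $|H|$ alcanza para extraer una subsucesión $d_0 < d_1 < \cdots < d_N$ con $(d_i)^2 < d_{i+1}$, lo cual se logra adelgazando $H$ (tomando $k$ lo bastante grande como función de $N$).

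Con los $d_i$'s así escogidos, al interpretar $c_i \mapsto d_i$ transformamos a $\mathbb{N}$ en un modelo del fragmento finito de $T$ utilizado por $\pi$: (i) porque $\mathbb{N}\models\mathrm{PA}$ y en particular cumple las ecuaciones recursivas y la inducción acotada; (ii) por la elección $(d_i)^2 < d_{i+1}$; (iii) por la homogeneidad de $F$, que garantiza $\psi_j(\overline{y}; c(k)) \leftrightarrow \psi_j(\overline{y}; c(k'))$ para $\overline{y} < d_i$ y $i < k, k'$. Tal estructura no puede satisfacer $0=1$, lo que contradice la existencia de $\pi$. Por lo tanto $\mathrm{PA}\vdash\mathrm{Con}(T)$.

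El principal obstáculo será la formalización cuidadosa del argumento dentro de PA: debemos asegurarnos de que la coloración $F$ es efectivamente expresable (lo que se garantiza pues las $\psi_j$ son acotadas y admiten un predicado de satisfacción primitivo recursivo), manejar correctamente el hecho de que el número de colores depende de $x_0$ (lo cual se resuelve justamente gracias a la condición de relativamente grande en PH), y verificar que el adelgazamiento final de $H$ para lograr simultáneamente la longitud $N+1$ y la condición de crecimiento $(d_i)^2 < d_{i+1}$ es compatible con las cotas provistas por PH, sin requerir más inducción de la que ya se está estableciendo consistente.
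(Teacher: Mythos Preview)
Tu estrategia global coincide con la del artículo: producir, vía PH, una sucesión de indiscernibles para las fórmulas acotadas de (iii) que sirva para interpretar las constantes $c_i$ en $\mathbb{N}$ y así satisfacer cualquier fragmento finito de $T$. Sin embargo, dos pasos que presentas como rutinarios son precisamente el contenido técnico de la prueba, y tal como los describes no funcionan.

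\textbf{El número de colores.} Tu coloración $F$ asigna a $\{x_0<\cdots<x_r\}$ la secuencia completa de valores de verdad de $\psi_j(\overline{y};x_1,\ldots,x_r)$ para todo $\overline{y}<x_0$; el número de colores es entonces del orden de $2^{s\cdot x_0^{\ell}}$, que para $x_0$ cercano a $m$ excede cualquier variante de PH ``con $m$ colores''. La condición de relativamente grande no resuelve esto por sí sola. El artículo lo aborda de dos maneras: el Lema~5.6 (el truco de la raíz cuadrada, que convierte $r$ colores en $1+2\surd r$ colores subiendo la dimensión en uno, iterable hasta un número acotado) y, sobre todo, la partición $S:[M]^{2e+1}\to 2$ de la prueba de~5.11, que en lugar de codificar \emph{todos} los valores de verdad pregunta únicamente si $P_\epsilon(b)=P_\epsilon(c)$ para todo $\epsilon<2^a$. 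Esta $S$ tiene exactamente dos colores; la relativa grandeza interviene después, en un argumento de conteo que fuerza el valor homogéneo a ser~$0$.

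\textbf{El adelgazamiento para el crecimiento.} Tomar $k$ grande en función de $N$ no basta para extraer $d_0<\cdots<d_N$ con $d_i^2<d_{i+1}$: un conjunto relativamente grande puede ser, por ejemplo, el intervalo $\{m_0,m_0+1,\ldots,2m_0-1\}$, donde ni siquiera existe $d_1>d_0^2$. El artículo incorpora esta condición \emph{dentro} de la partición: el Lema~5.10(ii) construye $R:[M]^2\to r$ tal que la homogeneidad de $R$ fuerza $f_m(x)<y$ para $x<y$ en el conjunto homogéneo, y el Lema~5.7 combina $R$ con $Q$ (que controla $\min(X)$) y con $S$ antes de aplicar PH a la partición resultante $P^*$.

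En resumen, tu plan es el correcto, pero los ``obstáculos'' que señalas al final no son de formalización sino de contenido: sin el truco de los dos colores (o la raíz cuadrada) y sin una partición auxiliar que imponga el crecimiento, el argumento no cierra.
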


\noindent Por el segundo Teorema de Incompletitud, será suficiente demostrar que la proposición se puede probar en PA, para obtener la independencia de PH sobre PA. Pues  $PA$ no puede demostrar $\mathrm{Con}(PA)$ luego, PA no puede demostrar PH. Ahora, necesitamos algunos lemas.
\medskip

\begin{lemma}
Sean $P_{0}$ y $P_{1}$ particiones de $[M]^{e}$ en $r_{0}$ y $r_{1}$ partes, respectivamente. Entonces, hay una partición $P$ de $[M]^{e}$ en $r_{0}\cdot r_{1}$ partes, tal que para $H\subset M$, $H$ es homogéneo para $P$ si, y sólo si, $H$ es homogéneo para  ambas $P_{0}$ y $P_{1}$.
\end{lemma}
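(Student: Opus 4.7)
The plan is to construct $P$ as the ``joint refinement'' of $P_0$ and $P_1$, using the standard bijection between $r_0 \times r_1$ and $r_0 \cdot r_1$ given by $(i, j) \mapsto i \cdot r_1 + j$. Concretely, for each $X \in [M]^e$ I would set
\[
P(X) := P_0(X) \cdot r_1 + P_1(X),
\]
which indeed lies in $\{0, 1, \dots, r_0 \cdot r_1 - 1\}$ since $P_0(X) < r_0$ and $P_1(X) < r_1$. The crucial property of this definition is that the map $(i, j) \mapsto i \cdot r_1 + j$ is \emph{injective} on $r_0 \times r_1$ (this is just Euclidean division: given $m = i \cdot r_1 + j$ with $j < r_1$, both $i$ and $j$ are uniquely determined).

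Having set this up, I would verify the biconditional in both directions. For the ``if'' direction, assume $H$ is homogeneous for both $P_0$ and $P_1$, say $P_0$ takes constant value $i_0$ on $[H]^e$ and $P_1$ takes constant value $j_0$ on $[H]^e$; then by construction $P$ takes the constant value $i_0 \cdot r_1 + j_0$ on $[H]^e$, so $H$ is homogeneous for $P$. For the ``only if'' direction, suppose $H$ is homogeneous for $P$ with constant value $m$. Then for any $X, Y \in [H]^e$ we have $P_0(X) \cdot r_1 + P_1(X) = P_0(Y) \cdot r_1 + P_1(Y)$, and by the injectivity observation above this forces $P_0(X) = P_0(Y)$ and $P_1(X) = P_1(Y)$, so both $P_0$ and $P_1$ are constant on $[H]^e$.

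No real obstacle is expected here; the lemma is essentially bookkeeping, and the only point requiring a line of argument is the uniqueness of the decomposition $m = i \cdot r_1 + j$ with $j < r_1$, which is the Euclidean division algorithm already available in PA by Theorem \ref{algoritmo-euclides_theorem}. The lemma will then be applied in the sequel to combine partitions coming from different formulas into a single partition, so that a single homogeneous set handles all of them simultaneously.
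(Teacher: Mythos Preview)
Your argument is correct and is essentially the paper's own proof: the paper simply sets $P(a)=(P_{0}(a), P_{1}(a))$, and your map $P(X)=P_{0}(X)\cdot r_{1}+P_{1}(X)$ is just this pair composed with the standard bijection $r_{0}\times r_{1}\to r_{0}\cdot r_{1}$. Your explicit verification of both directions and appeal to Euclidean division only spell out what the paper leaves implicit.
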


\begin{proof}
Basta considerar $P(a)=(P_{0}(a), P_{1}(a))$.
\end{proof}

\begin{lemma}
Un conjunto $H$ es homogéneo para una partición $P$ de $[M]^{e}$ si, y sólo si, todo subconjunto de $H$ de tamaño $e+1$ es homogéneo para $P$.
\end{lemma}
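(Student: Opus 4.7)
The forward implication is essentially immediate: if $H$ is homogéneo para $P$, then $P$ is constant on $[H]^{e}$; for any $K \subset H$ with $|K| = e+1$, we have $[K]^{e} \subset [H]^{e}$, so $P$ is still constant there, and $K$ is homogéneo. I would dispatch this in a single line.

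For the reverse implication, the trivial cases $|H| < e+1$ should be handled separately: if $|H| < e$ then $[H]^{e} = \emptyset$ and $H$ is vacuously homogéneo, and if $|H| = e$ then $[H]^{e} = \{H\}$ and $P$ is trivially constant, while in both cases the hypothesis is vacuous. So assume $|H| \geq e+1$. The key observation I would isolate as the main step is the following: if $A, B \in [H]^{e}$ satisfy $|A \cap B| = e-1$, then $A \cup B \in [H]^{e+1}$, which by hypothesis is homogéneo, and since both $A$ and $B$ are $e$-subsets of $A \cup B$ we conclude $P(A) = P(B)$.

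The rest is a connectivity argument: given arbitrary $A, B \in [H]^{e}$, I would build a chain $A = A_{0}, A_{1}, \ldots, A_{k} = B$ of elements of $[H]^{e}$ such that consecutive sets differ by exactly one element, i.e.\ $|A_{i} \cap A_{i+1}| = e-1$. One concrete way is to enumerate $B \setminus A = \{b_{1}, \ldots, b_{m}\}$ and $A \setminus B = \{a_{1}, \ldots, a_{m}\}$ (necessarily of the same size) and set $A_{i} = (A \setminus \{a_{1}, \ldots, a_{i}\}) \cup \{b_{1}, \ldots, b_{i}\}$. Each $A_{i}$ lies in $[H]^{e}$ and $|A_{i} \cap A_{i+1}| = e-1$, so by the key observation $P(A_{i}) = P(A_{i+1})$ for every $i$, whence $P(A) = P(B)$. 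As $A, B$ were arbitrary, $P$ is constant on $[H]^{e}$, i.e.\ $H$ is homogéneo para $P$.

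I do not expect any real obstacle here; the only point demanding some care is making sure the swap chain stays inside $[H]^{e}$ and that each consecutive pair actually has intersection of size $e-1$, which the explicit construction above makes transparent.
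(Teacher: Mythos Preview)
Your proof is correct. The forward direction is indeed immediate, and your swap-chain connectivity argument for the converse is clean and fully justified.

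The paper takes a genuinely different route for the nontrivial direction: it argues by contrapositive via a minimality trick. Assuming $H$ is not homogeneous, let $a=a_{1}<\cdots<a_{e}$ be the first $e$ elements of $H$ and choose $b=b_{1}<\cdots<b_{e}$ in $[H]^{e}$ with $P(a)\neq P(b)$ and $b_{1}+\cdots+b_{e}$ minimal. If $i$ is the least index with $a_{i}\neq b_{i}$, then $\{a_{1},\ldots,a_{i},b_{i},\ldots,b_{e}\}$ is an $(e{+}1)$-subset of $H$ that fails to be homogeneous (it contains $b$, and also $b'=(a_{1},\ldots,a_{i},b_{i+1},\ldots,b_{e})$, which by minimality must satisfy $P(b')=P(a)\neq P(b)$). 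So the paper produces a single bad $(e{+}1)$-set in one stroke, whereas you walk from any $A$ to any $B$ through a chain of one-element swaps, invoking the hypothesis at each step. Your argument is more elementary and self-contained (no appeal to an ordering of $H$ or a minimal counterexample), while the paper's is shorter once one sees why the minimality forces $P(b')=P(a)$; both are perfectly adequate here.
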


\begin{proof}
Sea $a=a_{1}, ..., a_{e}$ los primeros $e$ elementos de $H$. Tomése $b=b_{1}, ..., b_{e}$ tal que $P(a)\neq P(b)$ y tal que $b_{1}+...+b_{e}$ es minimizado. Si $i$ es el menor índice tal que $a_{i}\neq b_{i}$, entonces, $\{a_{1}, ..., a_{i}, b_{i}, ..., b_{e}\}$ no es homogéneo y de tamaño $e+1$.
\end{proof}

\begin{definition}\rm
Definimos $\surd r$ el primer número natural $s$ tal que $s^{2}\geq r$. Observese que para muchos $r$ (i.e., $r\geq 7$), $r\geq 1+2\surd r$.
\end{definition}

\begin{lemma}
Dada $P:[M]^{e}\to r$ hay un $P':[M]^{e+1}\to(1+2\surd r)$ tal que para todo $H\subset M$ de cardinal mayor que $e+1$, $H$ es homogéneo para $P$ si, y sólo si, $H$ es homogéneo para $P'$.
\end{lemma}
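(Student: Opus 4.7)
La estrategia es comprimir los $r$ colores de $P$ identificándolos con pares en $s\times s$, donde $s=\surd r$ (así $s^{2}\geq r$). Fijemos una inyección $r\hookrightarrow s\times s$ y escribamos $P(Y)=(P_{1}(Y), P_{2}(Y))$, con $P_{1}, P_{2}:[M]^{e}\to s$ las proyecciones. Para cada $X=\{a_{0}<a_{1}<\cdots<a_{e}\}\in[M]^{e+1}$ consideremos los $e$-subconjuntos extremales $L(X)=X\setminus\{a_{e}\}$ y $R(X)=X\setminus\{a_{0}\}$, y definamos
\[
P'(X) = \begin{cases}
(1, u) & \text{si } P_{1}(L(X)) = P_{1}(R(X)) = u, \\
(2, v) & \text{si } P_{1}(L(X))\neq P_{1}(R(X)) \text{ y } P_{2}(L(X)) = P_{2}(R(X)) = v, \\
\star & \text{en otro caso,}
\end{cases}
\]
cuyo codominio tiene cardinal $s+s+1=1+2s$, como se requiere.

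La dirección $(\Rightarrow)$ es inmediata: si $H$ es $P$-homogéneo con color $(u^{*},v^{*})$, entonces $P_{1}\equiv u^{*}$ sobre $[H]^{e}$, luego $P_{1}(L(X))=P_{1}(R(X))=u^{*}$ y así $P'(X)=(1, u^{*})$ para todo $X\in[H]^{e+1}$. Para la dirección $(\Leftarrow)$, sea $H$ $P'$-homogéneo con $|H|\geq e+2$; por el lema previo basta mostrar que todo $X\in[H]^{e+1}$ es $P$-homogéneo, ya que entonces $H$ mismo lo será. Dividimos según el valor constante de $P'$ sobre $[H]^{e+1}$: en los casos $(1, u^{*})$ y $(2, v^{*})$, la hipótesis $|H|\geq e+2$ permite \emph{deslizar} el par $(L(X), R(X))$ a lo largo de $H$ vía $(e+1)$-subconjuntos solapados, forzando $P_{1}\equiv u^{*}$ (respectivamente $P_{2}\equiv v^{*}$) sobre $[H]^{e}$, y combinando los deslizamientos en ambas coordenadas extremas se obtiene la constancia de $P$.

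El obstáculo principal está en el caso $P'\equiv \star$: debemos descartar la posibilidad de que $H$ sea $P'$-homogéneo con valor $\star$ pero no $P$-homogéneo. La resolución requiere afinar la definición de $P'$ —posiblemente refinando el caso $\star$ en subcasos que consuman mejor los $1+2s$ valores disponibles— o bien explotar la hipótesis $|H|>e+1$ para extraer una cadena de $(e+1)$-subconjuntos cuya configuración colectiva contradiga la supuesta $P'$-homogeneidad en $\star$. Este ajuste fino de la codificación es el paso técnicamente más delicado del argumento, y combina la estructura bidimensional de la factorización $r\hookrightarrow s\times s$ con las aplicaciones iteradas del lema previo.
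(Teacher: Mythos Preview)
Your decomposition $r\hookrightarrow s\times s$ and the target count $1+2s$ match the paper, but your definition of $P'$ is too coarse and the gap is not where you locate it. The case $(1,u^{*})$ already fails, not just~$\star$: take $e=2$, $H=\{0,1,2,3\}$, and let $P_{1}\equiv 0$ on $[H]^{2}$ while $P_{2}$ is non-constant (say $P_{2}(\{0,1\})=0$, $P_{2}(\{0,2\})=1$). Then every $3$-subset $X$ of $H$ has $P_{1}(L(X))=P_{1}(R(X))=0$, so $P'(X)=(1,0)$ constantly, yet $H$ is not $P$-homogeneous. Your sliding argument does correctly yield $P_{1}\equiv u^{*}$ on $[H]^{e}$, but it says nothing about $P_{2}$; the phrase ``combinando los deslizamientos en ambas coordenadas'' cannot be cashed out, since the case $(1,u)$ records no $P_{2}$-information whatsoever. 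A symmetric counterexample (with $P_{2}$ constant, $P_{1}$ injective) kills the case $(2,v^{*})$ as well.

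The paper's remedy is to let $P'$ inspect \emph{all} $e$-subsets of $b\in[M]^{e+1}$, not just the two extremal faces. Writing $P=sQ+R$ with $Q,R:[M]^{e}\to s$ and $b'$ the initial $e$-subset of $b$, one puts $P'(b)=0$ if $b$ is $P$-homogeneous, $P'(b)=(0,R(b'))$ if $b$ is $Q$-homogeneous but not $P$-homogeneous, and $P'(b)=(1,Q(b'))$ otherwise. Now if $H$ is $P'$-homogeneous with value $(1,i)$, then $Q(b')=i$ for every $b\in[H]^{e+1}$; since $|H|>e+1$, every $a\in[c]^{e}$ (with $c$ the first $e+1$ elements of $H$) equals $b'$ for some $b\in[H]^{e+1}$, whence $c$ is $Q$-homogeneous --- contradicting $P'(c)=(1,i)$. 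The value $(0,j)$ is ruled out analogously, forcing $P'\equiv 0$ and hence, by the previous lemma, $P$-homogeneity of $H$. The idea you were missing is that the case split in $P'$ must itself encode a \emph{global} homogeneity property of the $(e{+}1)$-set, not merely a comparison between two of its faces.
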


\begin{proof}
Sea $s=\surd r$. Definanse funciones $Q$ (para cociente) y $R$ (para residuo) ambas mapeando a $[M]^{e}$ en $s$ según la ecuación $P(a):=sQ(a)+R(a)$. Para $b=b_{1}, ..., b_{e}, b_{e+1}$ en $[M]^{e+1}$, sea $b'=b_{1}, ..., b_{e}$. Ahora, definimos $P'$ sobre $[M]^{e+1}$ por:

\[
P'(b):=\left\{\begin{array}{ccc}
0&\mathrm{si}&b\textsl{ es homogéneo para }P,\\
(0, R(b'))&\mathrm{si}&b\textsl{ es homogéneo para }Q\textsl{ y no para P}\\
(1, Q(b'))& &\textsl{ en otro caso.}
\end{array}
\right.
\]

\noindent Sea $H$ homogéneo para $P'$ de cardinalidad $>e+1$, y sea $c$ los primeros $e+1$ miembros de $H$. Debemos ver que $P'(c)=0$ para verificar que $H$ es homogéneo  para $P$, por el Lema 5.4. Notése que para cada $a$ en $[c]^{e}$ hay un $b$ en $[H]^{e+1}$ tal que $b'=a$. Supóngase que $P'(c)=(1, i)$. Entonces, por las observaciones previas, $Q(a)=i$ para toda $a$ en $[c]^{e}$ así que $c$ es homogénea para $Q$, contradiciendo la definición de $P'$. Así, supóngase que $P'(c)=(0, j)$ así que $c$ es $Q$ digamos $Q(a)=i$ para todo $a$ en $[c]^{e}$. Pero entonces $P(a)=si+i$ para toda tal $a$ así que $c$ es homogéneo para $P$, de nuevo contradiciendo la definición de $P'$.
\end{proof}

\begin{lemma}
Supóngase que nos son dadas $n$ particiones $P_{i}:[M]^{e_{i}}\to r_{i}$, $i<n\leq n$. Sea $e=\max_{i}e_{i}$ y $r=\prod_{i}\max (r_{i}, 7)$. Hay una partición $P:[M]^{e}\to r$ tal que para todo $H\subset M$ de cardinalidad mayor que $e$, $H$ es homogéneo para $P$ si, y sólo si, $H$ es homogéneo para todos los $P_{i}$'s. 
\end{lemma}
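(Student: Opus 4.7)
El plan es primero uniformizar las aridades de las $P_{i}$'s mediante aplicaciones reiteradas del Lema 5.6, controlando los rangos gracias al umbral $7$, y después combinarlas con el Lema 5.3 iterado. Para cada $i$ con $e_{i}<e$ aplico $e-e_{i}$ veces el Lema 5.6 a $P_{i}$, obteniendo una sucesión $P_{i}=P_{i}^{(0)},P_{i}^{(1)},\ldots,P_{i}^{(e-e_{i})}$ donde $P_{i}^{(j)}:[M]^{e_{i}+j}\to s_{i,j}$, con $s_{i,0}=r_{i}$ y $s_{i,j+1}=1+2\surd s_{i,j}$; denoto $P_{i}':=P_{i}^{(e-e_{i})}$. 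Por el Lema 5.6, para cada $H\subset M$ con $|H|>e$, $H$ es homogéneo para $P_{i}^{(j)}$ si, y sólo si, lo es para $P_{i}^{(j+1)}$; encadenando estas equivalencias, $H$ es homogéneo para $P_{i}$ si, y sólo si, lo es para $P_{i}'$.

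El control del rango es el paso clave. Afirmo que $s_{i,j}\leq\max(r_{i},7)$ para todo $j\geq 0$, por inducción sobre $j$. La observación hecha tras la Definición 5.5 --- a saber, que $s\geq 1+2\surd s$ siempre que $s\geq 7$ --- se verifica elementalmente, pues equivale a $(s-1)^{2}\geq 4s$. Si $s_{i,j}\geq 7$ entonces $s_{i,j+1}\leq s_{i,j}$; si $s_{i,j}\leq 6$ entonces $\surd s_{i,j}\leq 3$ y por tanto $s_{i,j+1}\leq 7$. En ambos casos $s_{i,j+1}\leq\max(s_{i,j},7)\leq\max(r_{i},7)$, lo que cierra la inducción. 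Podemos entonces reindizar $P_{i}'$ para que tome valores en $\max(r_{i},7)$.

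Finalmente, aplicando $n-1$ veces el Lema 5.3 de manera iterada sobre $P_{1}',\ldots,P_{n}'$ obtengo una partición $P:[M]^{e}\to\prod_{i}\max(r_{i},7)=r$ tal que $H\subset M$ es homogéneo para $P$ si, y sólo si, es homogéneo para todos los $P_{i}'$ simultáneamente. Componiendo con la equivalencia establecida en el primer paso, $H$ es homogéneo para $P$ si, y sólo si, es homogéneo para cada $P_{i}$, siempre que $|H|>e$, que es justamente lo afirmado.

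El principal obstáculo técnico es precisamente el control del rango durante las elevaciones sucesivas: sin el umbral $7$ en la definición de $r$, una elevación aplicada a una partición con rango $r_{i}<7$ podría temporalmente ensancharlo antes de estabilizarse. El $7$ en la definición absorbe exactamente esta posible expansión y garantiza la cota uniforme $\prod_{i}\max(r_{i},7)$.
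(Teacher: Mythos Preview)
Tu demostración es correcta y constituye precisamente el argumento que el artículo deja implícito: en el texto este lema se enuncia sin prueba, como consecuencia inmediata de los Lemas~5.3 y~5.6 y de la observación tras la Definición~5.5, y tu razonamiento ensambla esas piezas de la manera esperada. El control inductivo del rango mediante el umbral $7$ es exactamente la razón de ser del factor $\max(r_{i},7)$ en el enunciado, y tu verificación de que $|H|>e$ basta para encadenar todas las equivalencias de aridad es correcta.
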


\begin{proposition}
Para todos $e, r, k$ hay un $M$ tal que para cualquier familia $( P_{\epsilon}$, $\epsilon<2^{M})$ de particiones $P_{\epsilon}:[M]^{e}\to r$, hay un $X$ de cardinalidad mayor o igual que $k$ tal que:

\begin{itemize}
\item[i)] Si $a, b\in X$ y $a<b$, entonces $a^{2}<b$,
\item[ii)] Si $a\in X$ y $\epsilon<2^{a}$, entonces $X\sim(a+1)$ es homogéneo para $P_{\epsilon}$.
\end{itemize}
\end{proposition}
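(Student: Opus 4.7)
La propuesta es una construcción iterativa de $X=\{x_{1}<x_{2}<\cdots<x_{k}\}$ en la que, en cada etapa, se combinan las particiones relevantes (por el Lema 5.6) en una sola partición con más partes, para luego aplicar PH y extraer un subconjunto simultáneamente homogéneo en el que continúe la recursión. La idea central: una vez fijado $x_{i}$, las $2^{x_{i}}$ particiones $P_{\epsilon}$ con $\epsilon<2^{x_{i}}$ son finitas, así que el Lema 5.6 las agrupa en una única $Q_{i}$ con $\max(r,7)^{2^{x_{i}}}$ partes, y la $Q_{i}$-homogeneidad equivale a la homogeneidad simultánea para todas las $P_{\epsilon}$ con $\epsilon<2^{x_{i}}$.

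En detalle: elegido $M$ suficientemente grande (cota asegurada por PH), construimos recursivamente $[M]=M_{0}\supseteq M_{1}\supseteq\cdots\supseteq M_{k}$ como sigue. Dado $M_{i-1}$, definimos $x_{i}:=\min M_{i-1}$ y $N_{i}:=M_{i-1}\cap(x_{i}^{2},\infty)$; por el Lema 5.6, combinamos $P_{0}|_{[N_{i}]^{e}},\ldots,P_{2^{x_{i}}-1}|_{[N_{i}]^{e}}$ en una partición $Q_{i}:[N_{i}]^{e}\to\max(r,7)^{2^{x_{i}}}$ y aplicamos PH a $Q_{i}$ para extraer $M_{i}\subseteq N_{i}$, $Q_{i}$-homogéneo, relativamente grande y de tamaño suficiente para las $k-i$ etapas restantes. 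Ponemos $X:=\{x_{1},\ldots,x_{k}\}$. La condición (i) se cumple pues $x_{i+1}\in N_{i}\subset(x_{i}^{2},\infty)$; para (ii), dados $a=x_{i}$ y $\epsilon<2^{x_{i}}$, la $Q_{i}$-homogeneidad de $M_{i}$ implica su $P_{\epsilon}$-homogeneidad (ya que $Q_{i}$ engloba a $P_{\epsilon}$), y $X\sim(x_{i}+1)=\{x_{i+1},\ldots,x_{k}\}\subseteq M_{i}$ porque $x_{j}\in M_{j-1}\subseteq M_{i}$ para todo $j>i$.

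El principal obstáculo está en justificar que $M$ puede tomarse lo suficientemente grande para que la iteración sobreviva las $k$ etapas. El número de partes $\max(r,7)^{2^{x_{i}}}$ es doble-exponencial en $x_{i}$ y, por consiguiente, la función de Ramsey involucrada es astronómica; es aquí donde PH se usa de manera esencial (y no basta con el Teorema Finito de Ramsey): la propiedad de \emph{relativamente grande}, $|M_{i}|\geq\min M_{i}=x_{i+1}$, acopla el tamaño del homogéneo con el punto de corte del siguiente paso, sosteniendo la recursión. Una variante más compacta consiste en aplicar PH una sola vez con dimensión $e+1$ a la partición maestra $\hat{P}:[M]^{e+1}\to r^{2^{M}}$ dada por $\hat{P}(\{a_{0},\ldots,a_{e}\})=(P_{\epsilon}(\{a_{1},\ldots,a_{e}\}))_{\epsilon<2^{a_{0}}}$ (con ceros de relleno para $\epsilon\in[2^{a_{0}},2^{M})$): el homogéneo relativamente grande $H$ provisto por PH satisface (ii) directamente, y basta luego extraer de $H$ un subconjunto $X$ de tamaño $k$ con crecimiento $a^{2}<b$ requerido por (i).
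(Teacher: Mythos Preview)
Hay una brecha genuina, y tú mismo la señalas sin resolverla: en ambas variantes el número de colores depende de $M$. En la partición maestra $\hat P:[M]^{e+1}\to r^{2^{M}}$ necesitas que $M$ supere al número de Paris--Harrington para $r^{2^{M}}$ colores; esto es una ecuación de punto fijo $M\geq\sigma(e{+}1,\,r^{2^{M}},\,k')$ que no tiene solución, pues $\sigma$ crece mucho más rápido que cualquier torre en $M$. En la versión iterativa el problema es el mismo disfrazado: en el paso $i$ combinas $2^{x_i}$ particiones obteniendo $\max(r,7)^{2^{x_i}}$ colores, y como los $x_i$ crecen (al menos $x_{i+1}>x_i^{2}$) no hay cota a priori, independiente de la familia $(P_\epsilon)$, que te permita elegir $M$ de antemano. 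Además, al aplicar PH dentro de $N_i\subset[M]$ la condición ``relativamente grande'' se refiere a posiciones tras reetiquetar, no a los valores reales, de modo que $|M_i|\geq\min M_i$ no se sigue.

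La idea que falta, y que el artículo usa, es reducir a un número de colores \emph{fijo} subiendo la dimensión: en lugar de registrar el vector completo $(P_\epsilon(b))_{\epsilon<2^{a}}$, se define $S:[M]^{2e+1}\to 2$ por $S(a,b,c)=0$ si $P_\epsilon(b)=P_\epsilon(c)$ para todo $\epsilon<2^{a}$, y $1$ en otro caso. Ahora el número de colores es $2$, independiente de $M$; tras combinar $S$ con particiones auxiliares $Q,R$ (que fuerzan $\min X\geq p$ y el crecimiento $a^{2}<b$) y aplicar los lemas de reducción de colores, se obtiene $P^{*}:[M]^{e'}\to s'$ con $s'$ dependiendo sólo de $e,r,k$. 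Una única aplicación de PH da $M$ y un $X$ relativamente grande homogéneo para $S$. El punto clave restante es que la homogeneidad de $S$ fuerza el valor constante $0$: como $|X|\geq f_3(\min X)\geq\beth_{\min X}$, hay más $e$-tuplas consecutivas en $X$ que posibles vectores de colores $r^{2^{\min X}}$, así que dos coinciden y por homogeneidad todas coinciden, dando (ii).
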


\begin{statement}\rm
La proposición anterior implica $\mathrm{Con}(T)$.
\end{statement}

\begin{proof}
Ver \cite{paris}.
\end{proof}

\noindent  Para cualquier función $g$, sea $g^{(x)}$, $g$ compuesta con sí misma $n$-veces. Sea $f_{0}(x)=x+2$ y sea $f_{n+1}(x)=f_{n}^{(x)}(2)$. Se puede observar que $f_{1}(x)\geq 2x$, $f_{2}(x)\geq 2^{x}$, $f_{3}(x)\geq\beth_{x}$ donde $\sqsupset\beth_{x}=2^{2^{\ddots^{2}}}$ y así para $f_{4}, f_{5}, ...$ 

\begin{lemma} Lo siguiente se cumple.
\begin{itemize}
\item[i)]Para todo $P$ hay un $Q:[M]^{1}\to p+1$ tal que si $X$ es homogéneo para $Q$ y de cardinalidad al menos 2, entonces $\min(X)\geq p$.
\item[ii)]Para cada $m$ hay una partición $R:[M]^{2}\to r$ (donde $r$ depende solo de $m$) tal que si $X\subset M$ es relativamente grande y homogéneo para $R$ y de cardinalidad mayor que 2, entonces para todo $x, y\in X$, $x<y$ se tiene que $f_{m}(x)<y$.
\end{itemize}
\end{lemma}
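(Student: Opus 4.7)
For part (i), I would use the partition $Q(x):=\min(x,p)$ of $M$ into $p+1$ classes. If $X$ is $Q$-homogeneous with $|X|\geq 2$ and $a<b$ lie in $X$, then $\min(a,p)=\min(b,p)$; assuming $a<p$ we obtain $a=\min(a,p)=\min(b,p)\in\{b,p\}$, both strictly greater than $a$, a contradiction. Hence $\min X=a\geq p$, which is exactly the claim.

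For part (ii), the plan is to record, for each pair $x<y$, the level of $y$ in the fast-growing hierarchy above $x$. For $x<y$ set
\[
R_0(\{x,y\}) := \begin{cases} m+1, & \text{if } f_m(x)<y,\\ \min\{k\leq m : f_k(x)\geq y\}, & \text{otherwise}, \end{cases}
\]
and then $R(\{x,y\}):=(Q(x),\,R_0(\{x,y\}))$, where $Q$ is the partition from (i) with a threshold $p=p(m)$ to be determined. Then $R$ has at most $(p+1)(m+2)$ colours, a quantity depending only on $m$.

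Now let $X$ be $R$-homogeneous of colour $(q,c)$, relatively large, and with $|X|>2$. Homogeneity of the first coordinate gives $\min X\geq p$ exactly as in (i). The colour $c=m+1$ yields the desired $f_m(x)<y$ on every pair of $X$ by definition, so I need to rule out the ``bad'' colours $c\leq m$. For $c=0$, every pair gives $y\leq x+2$, forcing $X\subseteq\{x_0,x_0+1,x_0+2\}$ with $x_0:=\min X$; combined with $|X|>2$ this pins $|X|=3$, and relatively large then asks $3\geq\min X\geq p$, a contradiction as soon as $p\geq 4$. For $1\leq c\leq m$ the consecutive-pair bound $x_{i+1}>f_{c-1}(x_i)$ iterates to $x_{|X|-1}>f_{c-1}^{(|X|-1)}(x_0)$, while the bound $x_{|X|-1}\leq f_c(x_0)=f_{c-1}^{(x_0)}(2)$ coming from the pair $(x_0,x_{|X|-1})$ squeezes the iterated hierarchy; once $x_0\geq p\geq f_{c-1}(2)$, monotonicity of $f_{c-1}$ upgrades this to $f_{c-1}^{(|X|)}(2)\leq f_{c-1}^{(|X|-1)}(x_0)<f_{c-1}^{(x_0)}(2)$, i.e., $|X|<x_0=\min X$, contradicting relatively large.

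Taking $p:=\max(4,f_{m-1}(2))$ (which depends only on $m$) closes all bad colours simultaneously, leaving only $c=m+1$ and hence the conclusion $f_m(x)<y$. The main obstacle is precisely this uniform bookkeeping: the $c=0$ analysis and the $1\leq c\leq m$ analysis have structurally different arithmetic, and one must check that a single threshold $p(m)$ closes both cases. The rest reduces to monotonicity calculations in the fast-growing hierarchy.
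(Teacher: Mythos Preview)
The paper does not give its own proof of this lemma; it simply writes ``Ver \cite{paris}'' and defers to the original Paris--Harrington article. So there is no in-paper argument to compare against directly.

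Your argument is correct. Part (i) is the standard trick and matches what Paris--Harrington do. For part (ii), the original proof proceeds by induction on $m$: one builds $R_{m+1}$ from $R_m$ by adding a coordinate that records, roughly, how many iterates of $f_m$ fit between consecutive elements, and then invokes the ``square-root'' lemma (Lema 5.5 here) at each stage to keep the number of colours bounded by a function of $m$ alone. Your construction is instead a one-shot encoding: the colour $R_0$ records in a single step the least level $c$ of the hierarchy at which $f_c(x)\geq y$, and you then use the threshold from part (i) to kill all colours $c\leq m$ simultaneously. This avoids the induction and the repeated appeals to Lema 5.5, at the price of the uniform bookkeeping you flag at the end. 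Both routes give a bound on $r$ depending only on $m$, which is all that is needed downstream in Proposición 5.12.

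Two small checks worth making explicit in a final write-up: first, that $j\mapsto f_{c-1}^{(j)}(2)$ is strictly increasing, which needs $f_{c-1}(n)>n$ for all $n$ in the relevant range (immediate from $f_0(n)=n+2$ and an easy induction); second, that $f_{c-1}(2)$ is increasing in $c$, so that the single choice $p=\max(4,f_{m-1}(2))$ really dominates all the thresholds $f_{c-1}(2)$ for $1\leq c\leq m$. Both are routine.
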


\begin{proof}
Ver \cite{paris}.
\end{proof}

\begin{lemma}
Sea $P:[M]^{e}\to s$ ($e\geq2$) y $m$ dado. Hay una partición $P^{*}:[M]^{e}\to s'$, donde $s'$ depende solo de $m, e$ y $s$, tal que si hay un $Y\in M$ relativamente grande y homogéneo para $P^{*}$ de cardinalidad mayor que $e$, enotnces hay un $X\subset M$ tal que $X$ es homogéneo para $P$ y $\mathrm{card}(X)$ es a lo menos $e+1$ y $f_{m}(\min(X))$.
\end{lemma}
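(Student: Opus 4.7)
El plan es fabricar $P^{*}$ acoplando la partición dada $P$ con la partición auxiliar de crecimiento rápido provista por el Lema 5.8(ii). Primero, aplicaría dicho lema al $m$ prescrito para obtener una partición $R:[M]^{2}\to r$, donde $r$ depende sólo de $m$, tal que cualquier subconjunto relativamente grande y $R$-homogéneo de cardinalidad mayor que $2$ tiene la propiedad de que $f_{m}(x)<y$ siempre que $x<y$ pertenezcan a él.

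A continuación, invocaría el Lema 5.6 con las dos particiones $P_{0}=P$ (de aridad $e\geq 2$ y rango $s$) y $P_{1}=R$ (de aridad $2$ y rango $r$). Como $\max(e,2)=e$, se obtiene una única partición $P^{*}:[M]^{e}\to s'$ con $s'=\max(s,7)\cdot\max(r,7)$, la cual, al depender $r$ sólo de $m$, queda determinada únicamente por $m,e,s$. Si ahora $Y\subseteq M$ es relativamente grande, $P^{*}$-homogéneo y de cardinalidad $>e$, el Lema 5.6 garantiza que $Y$ es simultáneamente homogéneo para $P$ y para $R$. Poniendo $X=Y$ se consigue un conjunto $P$-homogéneo con $|X|\geq e+1$, y como $X$ hereda la propiedad de ser relativamente grande y $R$-homogéneo con $|X|>2$, el Lema 5.8(ii) asegura que $f_{m}(x)<y$ para todo par $x<y$ en $X$, de donde se sigue la conclusión acerca de $f_{m}(\min X)$.

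El aspecto más delicado no es la construcción en sí —que es la composición transparente de dos lemas anteriores— sino verificar la uniformidad paramétrica: es esencial que $s'$ sea función únicamente de $m,e,s$, sin dependencia del universo $M$. Esto queda asegurado porque la aridad y el rango de $R$ dependen sólo de $m$, de modo que $s'=\max(s,7)\cdot\max(r,7)$ admite la descripción requerida. El resto es el \emph{bookkeeping} habitual del esquema de Paris--Harrington: combinar una partición combinatoria con una partición aceleradora para compaginar homogeneidad con crecimiento rápido en un único color.
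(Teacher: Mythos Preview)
The paper itself gives no proof here --- it simply refers the reader to \cite{paris} --- so there is no in-text argument to compare against. Evaluating your proposal on its own merits: the construction of $P^{*}$ by combining $P$ with the accelerator $R$ (this is Lema~5.10(ii) in the paper's numbering, combined via Lema~5.7) is the right opening move, and your uniformity check on $s'$ is correct. The gap is in your last sentence. From ``$f_m(x)<y$ for every pair $x<y$ in $X$'' together with relative largeness $|X|\geq\min X$, you assert that the required conclusion $|X|\geq f_m(\min X)$ follows. It does not. Take any $X=\{x_0<x_1<\cdots<x_{x_0-1}\}$ with $x_{i+1}>f_m(x_i)$ for each $i$: this set is relatively large and enjoys the spacing property, yet $|X|=x_0$, whereas $f_m(x_0)$ is vastly larger already for $m\geq 1$. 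The spacing conclusion of Lema~5.10(ii) controls the \emph{size of the elements} of $X$, not their \emph{number}; your step ``de donde se sigue la conclusi\'on acerca de $f_m(\min X)$'' is exactly the missing idea.

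In the original Paris--Harrington argument this lemma is not obtained by a one-shot combination with $R$ at level $m$ followed by $X=Y$. Rather, it and the spacing lemma are established together by induction on $m$, using the recursion $f_{m+1}(x)=f_m^{(x)}(2)$: relative largeness guarantees at least $\min X$ many consecutive $f_m$-gaps inside $X$, and that is precisely what is needed to manufacture a single $f_{m+1}$-jump at the next stage. Your proposal captures the bookkeeping shell of the construction but skips this inductive mechanism; to repair it you must either run that induction on $m$, or invoke the spacing lemma at a higher level and then extract the cardinality bound by a genuine counting argument rather than by fiat.
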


\begin{proposition}
PH implica  5.8.
\end{proposition}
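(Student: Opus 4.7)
The plan is to construct $X=\{a_0<a_1<\cdots<a_{k-1}\}$ inductively from the smallest element upward, applying PH once at each of the $k-1$ stages. Set $Y_0:=M$ and $a_0:=\min Y_0$. Given $a_0,\ldots,a_j$ and $Y_j$, iterate Lemma~5.6 to amalgamate the $2^{a_j}$ partitions $P_\epsilon$ with $\epsilon<2^{a_j}$, restricted to $[Y_j\cap(a_j^2,\infty)]^e$, into a single super-partition
$$P^{a_j}\colon [Y_j\cap(a_j^2,\infty)]^{e}\longrightarrow \max(r,7)^{2^{a_j}}.$$
Apply PH with parameters $(e,\max(r,7)^{2^{a_j}},k-j-1)$ to obtain a relatively large set $Y_{j+1}\subseteq Y_j\cap(a_j^2,\infty)$ homogeneous for $P^{a_j}$ with $|Y_{j+1}|\geq k-j-1$, and put $a_{j+1}:=\min Y_{j+1}$.

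After stage $j=k-2$, the set $X:=\{a_0,\ldots,a_{k-1}\}$ has exactly $k$ elements. Condition (i) holds by construction, since $Y_{j+1}\subseteq(a_j^2,\infty)$ forces $a_{j+1}>a_j^2$. Condition (ii) holds because $\{a_{j+1},\ldots,a_{k-1}\}\subseteq Y_{j+1}$ and, by the joint-homogeneity clause of Lemma~5.6, $Y_{j+1}$ is simultaneously homogeneous for each individual $P_\epsilon$ with $\epsilon<2^{a_j}$. The ambient $M$ is then obtained by composing the $k-1$ PH bounds backward from stage $k-2$ down to stage $0$; since PH supplies arbitrarily large bounds for any fixed triple of parameters, a finite $M$ depending only on $e$, $r$, $k$ exists.

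The main obstacle is that the palette size $\max(r,7)^{2^{a_j}}$ at stage $j$ is not known in advance: it depends on $a_j=\min Y_j$, which itself depends on the previous PH application applied to the specific family $(P_\epsilon)_{\epsilon<2^M}$. Because the $a_j$'s can grow doubly exponentially across the squaring thresholds $a_j^2<a_{j+1}$, the nested PH bounds compose into a function of $k,e,r$ that is not primitive recursive. Ensuring that each $Y_j\cap(a_j^2,\infty)$ is still large enough for the next PH application forces $M$ to absorb this tower of bounds; one must carry out a careful bookkeeping (either a direct backward induction on $j$, or an \emph{a priori} over-estimate using Lemma~5.10 with sufficiently large $m$ so that $f_m$ dominates every $r^{2^{(\cdot)^2}}$) to verify that the iteration closes. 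This non-primitive-recursive blow-up is not incidental: it is precisely this growth rate that will place PH outside the provability of PA and thereby drive the independence result derived later in the section.
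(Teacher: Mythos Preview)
Your inductive scheme has a genuine gap. In Proposition~5.8 the number $M$ must be fixed from $e,r,k$ alone, \emph{before} the adversary hands you the family $(P_\epsilon)_{\epsilon<2^M}$; only then do you search for $X$. In your plan the palette at stage~$j$ has size $\max(r,7)^{2^{a_j}}$, and $a_j=\min Y_j$ is produced by the previous PH call, which already depends on the concrete partitions. Hence the chain of PH bounds you want to ``compose backward'' is not a function of $e,r,k$ at all, and neither remedy you sketch closes the circle: a backward induction still needs an a~priori ceiling on $a_{k-2}$ you never obtain, and Lemma~5.10 only becomes usable \emph{after} everything has been amalgamated into one partition with a colour count independent of $M$---precisely the step your plan postpones. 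A compounding difficulty is that PH is stated for initial segments $[m]$; applying it to $Y_j\cap(a_j^2,\infty)$ forces a re-indexing under which ``relatively large'' no longer says $|Y_{j+1}|\ge\min Y_{j+1}$ in the original coordinates. The only size guarantee that survives re-indexing is $|Y_{j+1}|\ge k-j-1$, which is ordinary finite Ramsey (provable in PA) and cannot feed the next stage.

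The paper avoids all of this with a \emph{single} application of PH. The key device is a diagonal $2$-colouring $S:[M]^{2e+1}\to 2$: writing a $(2e{+}1)$-set as $(a,b,c)$ with $b,c\in[M]^e$, put $S(a,b,c)=0$ iff $P_\epsilon(b)=P_\epsilon(c)$ for every $\epsilon<2^{a}$. Because $S$ has only two colours regardless of how many $P_\epsilon$ there are, it can be combined via Lemma~5.7 with the auxiliary partitions $Q,R$ of Lemma~5.10 and then passed through Lemma~5.11 to produce a single $P^{*}:[M]^{e'}\to s'$ whose colour count $s'$ depends only on $e$ and a threshold $p$, not on $M$. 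One invocation of PH then fixes $M$. Homogeneity for $R$ yields condition~(i); homogeneity for $S$, together with a pigeonhole count (the resulting $X$ contains more consecutive $e$-blocks than the $r^{2^{a}}$ possible colour-profiles at $a=\min X$, forcing $S\equiv 0$ on $X$), yields condition~(ii). This diagonal trick, which decouples the number of colours from $M$, is the idea your proposal is missing.
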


\begin{proof}
Nos son dados $e, r,k$ y debemos construir $M$ como en 5.1.8. Encuentre un $p$ tal que para todo $a\geq p$, $f_{3}(a)$ es razonablemente grande en comparación con $e, r, k$ y $a$. Notése que $f_{3}(y)\geq\beth_{y}$. Sea $e'=2e+1$. Ahora, dado cualquier $M$ y cualquier familia $P_{\epsilon}:[M]^{e}\to r$ para $\epsilon<2^{M}$, define un nuevo $S:[M]^{e'}\to 2$ por $S(a, b, c)=0$ si $P_{\epsilon}(b)=P_{\epsilon}(c)$ para todo $\epsilon<2^{a}$. $S(a, b, c)=1$ en otro caso. Sea $Q$ como en 5.10. y como en 5.11. para $m=1$. Use 5.7. para combinar $Q, R$ y $S$ en $P$ y luego use 5.12. para obtener $P^{*}:[M]^{e'}\to s'$. El número $s'$ depende solo de $e'$ y de $p$. Ahora, aplicamos el principio PH. Encontramos un $M$ tal que $\xymatrix{M\ar[r]_{*}&(e'+1)_{s'}^{e'}}$. Por 5.1.10. hay un $M$ tal que es homogéneo para $Q, R, S$ ($\min(X)\geq p$) con $\mathrm{card}(X)\geq f_{3}(\min(X))$. Ya que $X$ es homogéneo para $R$, y ya que $f_{2}(y)\geq y^{2}$ para aquellos $y$ lo suficientemente grandes para estár en $X$, $X$ satisface ii) de 5.8.
\medskip

\noindent Para verificar iii) de 5.8. reemplazamos $X$ por $X\sim d=X'$ donde $d=d_{1}, ..., d_{e}$ son los  últimos $e$ elementos de $X$. Sea $d_{\epsilon}'=P_{\epsilon}(d)$. Si mostramos que para todo $a<b_{1}<...<b_{e}$ en $X'$ y todo $\epsilon<2^{a}$, $P_{\epsilon}(b_{i})=i_{\epsilon}$. Es suficiente con mostrar que $S(a, b, c)=0$, para algún (y así, por homogéneidad, para todo) $1+2e$ túpla $a, b, c$ de $X$. Sea $a=\min(X)$ y considerese $e$-úplas consecutivas de $X\sim(a+1)$. Nuestra anterior elección de $p$ debe ser tal que haya más de $r^{(2^{a})}$ $e$-úplas para entonces podemos encontrar $e$-úplas para todo $\epsilon<2^{a}$, como se deseaba.
\end{proof}

\subsection{Refinamientos} \hspace{0.1cm}\\

\noindent  Considerése $\mathrm{Rfn}_{\Sigma_{1}}$ la afirmación de la teoría de números que dice que para toda $\Sigma_{1}$-sentencia $\psi$, si PA$\vdash\psi$ entonces $\psi$. 

\begin{theorem}
Es un teorema de la Aritmética de Peano que PH equivale a $\mathrm{Rfn}_{\Sigma_{1}}$.
\end{theorem}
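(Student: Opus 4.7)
Demostraremos ambas implicaciones dentro de PA.

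\textbf{Dirección (PH $\Rightarrow \mathrm{Rfn}_{\Sigma_1}$).} Argumentaremos por contraposición: supongamos que existe una sentencia $\psi = \exists x\, \theta(x)$ en $\Sigma_1$ con $\mathrm{PA} \vdash \psi$ pero $\neg \psi$ verdadera, y derivaremos $\neg$PH. Como $\neg \psi = \forall x\, \neg\theta(x)$ es $\Pi_1$ y verdadera, consideramos la teoría $T' := T + \{\neg\psi\}$. El núcleo del argumento es que la construcción de Paris-Harrington vía PH (Proposición 5.12 junto con la Afirmación 5.9) sigue funcionando para $T'$: los indiscernibles obtenidos de PH se interpretan como números naturales estándar, los cuales, por la verdad de $\neg\psi$, satisfacen $\neg\theta$; así, PH implica $\mathrm{Con}_{T'}$. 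Aplicando el argumento de la Proposición 5.1 a $T'$ se obtiene $\mathrm{Con}(\mathrm{PA} + \neg\psi)$, usando que $\neg\psi$ (siendo $\Pi_1$) se preserva al pasar al segmento inicial $\mathcal{I}$. Pero $\mathrm{PA} \vdash \psi$ hace $\mathrm{PA} + \neg\psi$ inconsistente, lo cual es una contradicción.

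\textbf{Dirección ($\mathrm{Rfn}_{\Sigma_1} \Rightarrow$ PH).} Para cada $n$ estándar sea $\mathrm{PH}_n := \forall r, k\, \exists m\, [m \to_*(k)^n_r]$. El hecho clave es que $\mathrm{PA} \vdash \mathrm{PH}_n$ para cada $n$ estándar, lo que se obtiene formalizando la prueba del Teorema Finito de Ramsey (Teorema 4.7) y del Principio de Paris-Harrington (Teorema 4.8) cuando se fija la dimensión $n$, donde el uso del Lema de König (Lema 4.5) se reduce a un argumento demostrable en PA. Más aún, esta demostrabilidad es uniforme: existe una función primitiva recursiva $\pi$ tal que $\pi(n)$ codifica una PA-demostración de $\mathrm{PH}_n$. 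Formalizando $\pi$ en PA obtenemos
\[
\mathrm{PA} \vdash \forall n, r, k\, \mathrm{Pr}_{\mathrm{PA}}\bigl(\ulcorner \exists m\, [m \to_* (\dot{k})_{\dot{r}}^{\dot{n}}]\urcorner\bigr).
\]
Como la fórmula en el alcance del existencial es $\Delta_0$, cada instancia cuantificada es $\Sigma_1$. Aplicando $\mathrm{Rfn}_{\Sigma_1}(\mathrm{PA})$ (en su forma uniforme equivalente sobre PA) se concluye $\forall n, r, k\, \exists m\, [m \to_* (k)^n_r]$, que es PH.

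\textbf{Principal obstáculo.} La dificultad más seria reside en la primera dirección: debe verificarse que el procedimiento combinatorio de Paris-Harrington para obtener $\mathrm{Con}_T$ a partir de PH es robusto frente a la adición de una sentencia $\Pi_1$ verdadera. Esto se reduce a una forma de absolutividad $\Pi_1$: los indiscernibles obtenidos vía PH son números naturales estándar y, por tanto, heredan las propiedades $\Pi_1$ verdaderas en $\mathbb{N}$. Formalizar esta absolutividad en PA requiere emplear el predicado de verdad $\Sigma_1$ para capturar internamente la hipótesis $\neg \psi$. La dirección recíproca es conceptualmente más directa, pero exhibir explícitamente la función primitiva recursiva $\pi$ demanda una inspección cuidadosa de la prueba del Teorema Finito de Ramsey para cada dimensión fija $n$.
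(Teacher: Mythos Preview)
Your proof is correct and follows essentially the same approach as the paper: for $\mathrm{PH}\Rightarrow\mathrm{Rfn}_{\Sigma_1}$ both you and the paper show that under $\mathrm{PH}$ and $\neg\psi$ one obtains $\mathrm{Con}(T+\neg\psi)$ and hence $\mathrm{Con}(\mathrm{PA}+\neg\psi)$ via Proposition~5.1, while for $\mathrm{Rfn}_{\Sigma_1}\Rightarrow\mathrm{PH}$ both invoke the PA-provability (provably in PA) of each instance $\exists m\,(m\to_{*}(k)^{n}_{r})$ and then apply reflection. Your exposition is in fact more explicit than the paper's (you spell out the primitive-recursive proof-enumerator $\pi$ and the $\Pi_1$-absolutivity needed to carry $\neg\psi$ along), though note a small slip in framing: you announce ``derivaremos $\neg\mathrm{PH}$'' but actually derive a contradiction from $\mathrm{PH}$ together with the putative counterexample, which is the correct contrapositive argument.
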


\begin{proof}
Se sabe que para todos $e, r, k$, PA$\vdash\exists M(\xymatrix{M\ar[r]_{*}&(k)_{r}^{e}})$. Este hecho en sí mismo es un teorema de PA. Una aplicación de $\mathrm{Rfn}_{\Sigma_{1}}$ nos da PH.
\medskip

\noindent Asumáse PH. Probemos $\mathrm{Rfn}_{\Sigma_{1}}$. Sea $\psi$ una $\Sigma_{1}$-sentencia. Probemos que si $\neg\psi$, entonces $\mathrm{Con}(\mathrm{PA}+\neg\psi)$. Si $\psi$ es falsa en $\omega$, entonces $\mathrm{Con}(\mathrm{T}+\neg\psi
)$, usando PH pero la prueba de 5.11. muestra que $\mathrm{Con}(\mathrm{T}+\neg\psi)$ implica $\mathrm{Con}(\mathrm{PA}+\neg\psi)$.
\end{proof}

\noindent Defina una función $f$ recursiva por  $f(e)=$ el menor $M$ tal que $\xymatrix{M\ar[r]_{*}&(e+1)_{e}^{e}}$. 

\begin{theorem}
Si $g$ es una (descripción de $a$) función recursiva y si además PA$\vdash$ ``$g$ es total'', entonces, para todo $e$ suficientemente grande, $f(e)>g(e)$.
\end{theorem}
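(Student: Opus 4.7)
The strategy is a reductio ad absurdum, combining the equivalence $\mathrm{PH}\iff\mathrm{Rfn}_{\Sigma_{1}}(\mathrm{PA})$ established in the preceding theorem with Gödel's Second Incompleteness Theorem. Suppose, contrary to the conclusion, that $g$ is a recursive function, $\mathrm{PA}\vdash$ ``$g$ es total'', and yet $f(e)\leq g(e)$ for infinitely many $e$. The plan is to derive $\mathrm{PA}\vdash\mathrm{PH}$, which by the preceding theorem yields $\mathrm{PA}\vdash\mathrm{Rfn}_{\Sigma_{1}}(\mathrm{PA})$ and hence $\mathrm{PA}\vdash\mathrm{Con}(\mathrm{PA})$, contradicting Gödel~II.

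First I would monotonize. Replace $g$ by $\tilde g(e):=1+\max_{k\leq e}g(k)$, which is still provably total in PA, strictly increasing, and dominates $g$. Note also that $f$ is itself non-decreasing: increasing the common parameter $e$ in the arrow relation $M\to_{*}(e+1)_{e}^{e}$ can only make the statement harder to satisfy, so the least witnessing $M$ is monotone in $e$. Combining the monotonicity of $f$ and $\tilde g$ with a shift of indices along a provably recursive monotone sequence, I would manufacture a provably total function $\hat g$ that majorizes $f$ pointwise everywhere.

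Next, define $F^{*}(e):=\mu M\leq\hat g(e)\,[M\to_{*}(e+1)_{e}^{e}]$, with $F^{*}(e):=0$ if no such $M$ exists. Since $\hat g$ is provably total and the partition predicate is $\Delta_{0}$, a bounded search shows that $F^{*}$ is provably total in PA. But by construction $F^{*}(e)=f(e)$ for every $e$, so $\mathrm{PA}\vdash\forall e\,\exists M\,(M\to_{*}(e+1)_{e}^{e})$. A routine padding argument on the three parameters $(n,r,k)$ lifts this to $\mathrm{PA}\vdash\mathrm{PH}$, and the preceding theorem together with Gödel~II then yields the desired contradiction.

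The main obstacle is the step manufacturing $\hat g$. The set $\{e:f(e)\leq g(e)\}$ is recursive in $\omega$ but is not a priori recognizable by PA as infinite, since such a recognition would already encode the unprovable principle PH. The delicate point is to exploit the monotonicity of $f$ and the provable totality of $\tilde g$ in order to sidestep this circularity — essentially, to convert the sparse pointwise information ``$f(e)\leq g(e)$ infinitely often'' into a uniformly available provably recursive majorant of $f$. This is precisely the spot where the $\epsilon_{0}$-level growth rate of $f$ collides with the sub-$\epsilon_{0}$ growth rates of every PA-provably recursive function, and it is what makes the result a theorem of the fast-growing hierarchy rather than a purely formal manipulation.
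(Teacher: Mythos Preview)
Your plan has a genuine gap at exactly the place you flag, and the gap is not a matter of delicacy but of principle. To pass from ``$f(e)\le g(e)$ for infinitely many $e$'' to a provably total $\hat g$ majorizing $f$ you need, at minimum, that the function $e\mapsto\min\{e'\ge e:f(e')\le g(e')\}$ be provably total in PA; composing it with $\tilde g$ would then give your $\hat g$. But provable totality of that function is exactly the $\Pi_{2}$ statement $\forall e\,\exists e'\ge e\,\exists M\le g(e')\,[M\to_{*}(e'+1)_{e'}^{e'}]$, and PA has no reason to prove this merely because it is true in $\mathbb{N}$. Monotonicity of $f$ and $\tilde g$ does not help: monotonicity lets you interpolate \emph{once you know where the next good index is}, and that is precisely what PA cannot locate. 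So the route ``assumption $\Rightarrow$ PA$\vdash$PH $\Rightarrow$ PA$\vdash\mathrm{Con}(\mathrm{PA})$'' is blocked.

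The paper's argument sidesteps this by never trying to push the reasoning inside PA. It works externally, via compactness. Assuming $f(e)\le g(e)$ for infinitely many $e$, take any finite fragment of the theory $T$ together with the new axioms $e<c_{0}$ and $\neg\exists x\le c_{i}\,(g(e)=x)$; choose one of the infinitely many good values of $e$ large enough for that fragment, and interpret the finitely many $c_{i}$'s as members of a relatively large homogeneous set inside $(e,f(e))$. Since $c_{i}<f(e)\le g(e)$, each axiom $\neg\exists x\le c_{i}\,(g(e)=x)$ is satisfied. Finite satisfiability gives $\mathrm{Con}\bigl(T+\text{``}g(e)\text{ lies above every }c_{i}\text{''}\bigr)$, and the earlier reduction $\mathrm{Con}(T)\to\mathrm{Con}(\mathrm{PA})$ then yields $\mathrm{Con}\bigl(\mathrm{PA}+\exists e\,(g(e)\text{ undefined})\bigr)$, contradicting PA$\vdash$``$g$ total''. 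The point is that the infinitude of the good set of $e$'s is used only in the metatheory, to witness finite satisfiability, not as something PA must recognize.
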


\begin{proof}
 Sea $S$ un subconjunto finito de $T$ y sea $c_{0}, c_{1}, ..., c_{k-1}$ las constantes que inciden en $S$. Podemos interpretar $c_{0}, ..., c_{k-1}$ usando miembros del intervalo $(e, f(e))$. Si $g(e)<f(e)$ para una cantidad infinita grande de $e$, lo anterior muestra la consistencia de $T$ más los siguientes axiomas en una nueva constante $e$:

\begin{center}
$e<c_{0}$; $\neg\exists x\leq c_{i}(g(e)=x)$ para todo $i\leq\omega$.
\end{center}

\noindent Obtenemos la consistencia de PA$+\exists e(g(e)$ no está definida).
\end{proof}

\end{document}